\documentclass[reqno,twoside,11pt]{amsart}

\usepackage{amsmath,amsfonts,calrsfs,fullpage,amssymb,xcolor,verbatim,eucal,yfonts,mathrsfs,graphics,caption}
\usepackage{graphics}
\usepackage{caption} 

\usepackage{mathtools}
\mathtoolsset{showonlyrefs}

 \usepackage{xcolor}
\newtheorem{Theorem}{Theorem}[section]
\newtheorem{Definition}{Definition}
\newtheorem{Proposition}[Theorem]{Proposition}
\newtheorem{Lemma}[Theorem]{Lemma}

\newtheorem{Remark}[Theorem]{Remark}

\newtheorem{Example}[Theorem]{Example}

\makeatletter
\@addtoreset{equation}{section}

\makeatother

\def\a{\alpha}
\def\al{\alpha}

\def\Z{\mathbb Z}

\def\R{\mathbb R}

\def\T{\mathbb T}

\def\E{\mathbb E}
\def\P{\mathbb P}

\def\eps{\varepsilon}
\def\ds{\displaystyle}

\def\vs{\vspace{.1mm}\\}
\def\div{\operatorname{div}}

\newcommand{\vt}{\vartheta}
\newcommand{\Hess}{\operatorname{Hess}}
\newcommand{\supp}{\operatorname{supp}}
\newcommand{\ind}{\mathbb{I}}
\newcommand{\St}{\mathrm{St}}

\newcommand{\dist}{\operatorname{dist}}

\title{Exponential decay of mass for inertial coalescing particles with Hamiltonian noise}\date{}

\author[S. Cerrai]{Sandra Cerrai}
\address{Department of Mathematics\\
university of Maryland\\
}
\email{cerrai@umd.edu}
\thanks{S. Cerrai was partiallly supported by NSF Grant DMS-2348096 (2024-2027), {\em 
 Multiscale Analysis of Infinite-Dimensional Stochastic Systems}}
\author[F. Flandoli]{Franco Flandoli}
\address{Scuola Normale Superiore\\
}
\email{franco.flandoli@sns.it}
\thanks{F. Flandoli was supported by ERC {\em AdG NoisyFluid n.~101053472}}

\author[M. xie]{Mengzi xie}
\address{Institut f\"{u}r Mathematik\\
Technische universit\"{a}t Berlin\\
}
\email{xie@tu-berlin.de}

\subjclass[2010]{}

\keywords{}

\begin{document}

\maketitle

\tableofcontents

\begin{abstract}
We study a system of $N$ inertial particles on a two-dimensional
torus $\T^2$, evolving under a second-order stochastic dynamics with
position-dependent friction $\lambda$ and noise amplitude $\sigma$,
and undergoing coalescence at rate $R_0$ when their distance falls
below a threshold $\delta$. In the joint small-mass / small-correlation
limit $\mu(\eps)\to 0$, $\mu(\eps)/\eps\to\al\in(0,\infty)$, the
empirical measure of the surviving particles converges to a stochastic
continuity equation with inertial drift~$g_\al$. Assuming that $\sigma$
is tangent to the level sets of a Hamiltonian $H=h_1(x_1)\,h_2(x_2)$
satisfying mild non-degeneracy and convexity-type conditions, and that
$\lambda$ and the amplitude $\rho$ of $\sigma$ along $\xi=\nabla^\perp H$
are aligned with $H$, we prove that the expected total mass decays
exponentially in time, with an explicit rate depending on $\al$ and on
the values of $\lambda$ and $\rho$ on the separatrix $\{H=0\}$. The
proof rests on a cell-by-cell analysis of the sign of $\div\,g_\al$ on
the level sets of $H$, showing that the inertial drift pushes
trajectories toward the separatrix at a quantitative rate.
\end{abstract}

\section{Introduction}

\subsection{Inertial particles and the small-mass limit}

The motion of small inertial particles transported by a fluid is a
classical and physically rich problem at the interface of fluid
dynamics, kinetic theory, and stochastic analysis. A particle of mass
$\mu>0$ immersed in a fluid with velocity field $u(t,x)$ is, to a first
approximation, governed by a balance between its inertia and the drag
exerted by the surrounding fluid, leading to a second-order Newton-type
equation in which the small parameter $\mu$ multiplies the
acceleration. When $\mu$ is small, the particle is closely entrained
by the flow, and one expects the dynamics to reduce to a first-order
equation for the position alone. Quantifying this reduction, and
identifying the residual effects of inertia that survive the limit
$\mu\to 0$, is the basic problem of \emph{small-mass asymptotics}.

The picture is enriched, and made considerably more interesting, when
the driving fluid velocity is itself a random process with a small
correlation time $\eps>0$, modelling the fast fluctuations of a
turbulent or otherwise irregular environment. The two small parameters
$\mu$ and $\eps$ then compete, and the limit $(\mu,\eps)\to(0,0)$
depends qualitatively on the ratio $\mu/\eps$.
In the language of fluid mechanics, this ratio is essentially the
\emph{Stokes number}
\[
\St\;:=\;\frac{\tau_p}{\tau_f},
\]
the dimensionless ratio between the particle relaxation time
$\tau_p\sim\mu/\lambda$ (the timescale on which inertia is damped by
drag) and the characteristic correlation time $\tau_f\sim\eps$ of the
ambient fluid. The Stokes number is the basic dimensionless
parameter governing the response of a heavy particle to a fluctuating
flow.

For the purpose of a simplified description, we divide the range of
Stokes numbers in three regimes. When $\mathrm{St}\gg1$, particles are very
inertial, large and heavy compared to the turbulent eddies; particles receive
several small random kicks from the eddies and perform a sort of inertial
Brownian Motion. This is the so-called Abrahamson regime \cite{Abr}, well
understood also from the viewpoint of stochastic modeling of turbulence
\cite{Pap} (the approach further developed here). When $\mathrm{St}\ll1$,
particles behave like traces, they follow the fluid streamlines and the
coalescence rate has been understood by means of the Saffman-Turner theory
\cite{Saf}; also that case is subsceptible of a rigorous analysis by a
stochastic model of turbulence \cite{FlaHuangJSP}, based on the approach to
coagulation developed in \cite{Hamm}, \cite{FlaHuang}. But, when the values of
the Stokes number are intermediate, say $\mathrm{St}=O(1)$, particles and
turbulent eddies are roughly comparable, and the behavior is much more
difficult. Several contributions in the Physics literature are devoted to this
case, see for instance \cite{Bec}, \cite{BecGuMe}, \cite{Dou}, \cite{Fal},
\cite{Meh}, \cite{Pum}, \cite{Wil}. Many heuristic and physically deep ideas have been presented in these works, in some cases concerned with concentration effects produced by turbulence eddies, a fact also at the core of the proof given here. Our work deals rigorously with a concentration property and controls the quantitative consequence of the exponential decay of particle density due to coalescence.

 The
parameter
\begin{equation}
	\label{regime}
	\al\;:=\;\lim_{\eps\to 0}\frac{\mu(\eps)}{\eps}
\end{equation}
that we shall use throughout the paper plays exactly this role of an
effective Stokes number in the small-mass / small-correlation
asymptotics: $\al=0$ corresponds to $\St\to 0$ (passive-tracer limit),
$\al=\infty$ to $\St\to\infty$ (ballistic limit), and finite
$\al\in(0,\infty)$ to the genuinely inertial regime.

When $\mu\ll\eps$ (i.e., $\St\ll 1$), the
particle relaxes to the quasi-static velocity before the fluid has
time to decorrelate, and a Stratonovich-type limit is recovered.
When $\mu\gg\eps$ (i.e., $\St\gg 1$), the fluid decorrelates before
the particle responds, and an It\^o-type limit emerges, with an
additional drift produced by the interaction between inertia and the
noise. The intermediate
regime, where $\mu/\eps\to\al\in(0,\infty)$, interpolates between
these two extremes and produces a  novel correction term, 
the \emph{inertial It\^o drift}, which depends explicitly on $\al$
and vanishes only in the Stratonovich limit $\al=0$.

\subsection{The particle system with  inertial It\^o drift}

We consider here the
second-order system 
\begin{equation}\label{fine1}
\left\{
\begin{array}{l}
\ds{dx_{i, \epsilon}(t)=v_{i, \epsilon}(t)\,dt,\ \ \ \ \ x_{i, \epsilon}(0) \in\,\mathbb{R}^2,}\\[6pt]
\ds{\mu(\epsilon)\,dv_{i, \epsilon}(t)=-\lambda(x_{i, \epsilon}(t))v_{i, \epsilon}(t)\,dt+\sigma(x_{i, \epsilon}(t))z_\eps(t)\,dt,\ \ \ \ \ \ v_{i, \epsilon}(0) \in\,\R^2,}
\end{array}\right.
\end{equation}
for $i=1,\ldots,N$, 
where $z_\epsilon$ is the Ornstein--Uhlenbeck process with correlation
time $\eps>0$
\[\epsilon\,d z_\epsilon(t)=-z_\epsilon(t)\,dt+dw(t),\ \ \ \ \ z_\epsilon(0)=0,\]
for a one-dimensional standard Brownian motion $w(t)$, $t\geq 0$, defined on the stochastic basis $(\Omega, \mathcal{F}, \{\mathcal F_t\}_{t\geq 0}, \P)$. Here $\lambda:\T^2\to(0,+\infty)$ is a $C^2$ friction coefficient bounded below by some $ l_0>0$, and $\sigma:\T^2\to \R^2$ is a $C^2$ noise amplitude, where $\T^2=\T\times \T$ and $\T=\R/L\,\Z$, for some $L>0$.

Following Theorem 2.2. of \cite{CFX} (see also \cite{hmdvw}, \cite{pav-stu} and \cite{PS-2} for related literature), the limiting dynamics of the inertial particles, in the regime \eqref{regime}, is described by the equation 
 \begin{equation}\label{fine2}
dx_i^\al(t)=(\lambda^{-1}\sigma)(x_i^\al(t))\circ dw(t)-g_\al(x_i^\al(t))\,dt,\end{equation}
where the inertial drift is given by
\[g_\alpha(x)=\frac{\alpha}{2(\lambda(x)+\alpha)} \Gamma(x),\ \ \ \ \ \Gamma(x):=D(\lambda^{-1}\sigma)(x)(\lambda^{-1}\sigma)(x)+(\sigma \sigma^t)(x)\frac{\nabla \lambda(x)}{\lambda^3(x)}.\]

Notice that when $\alpha=0$, the drift $g_\alpha(x)$ vanishes, so that the limiting equation reduces to the Stratonovich SDE associated with the first-order
Wong--Zakai approximation
\[dx_i(t)=(\lambda^{-1}\sigma)(x_i(t))\circ dw(t),\ \ \ \ \ \ x^i(0)=x^i_0 \in\,\mathbb{R}^2.\]
When $\alpha=+\infty$,  the limiting equation takes the form
\[dx_i(t)=(\lambda^{-1}\sigma)(x_i(t))dw(t)-\frac 12 (\sigma \sigma^t)(x_i(t))\frac{\nabla \lambda(x_i(t))}{\lambda^3(x_i(t))}\,dt.\]
 The intermediate cases $\al\in(0,\infty)$ produce a
 new drift, $g_\al$, interpolating between the two, that captures the
joint effect of finite inertia and finite correlation time.

In \cite{CFX}, this general result is applied to several physical
situations of interest, including particles in turbulent flows, subject
to Stokes drag, the centrifugal effect, and the so-called
\emph{turbophoretic drift}. In the present paper, we apply it to the analysis of the long-time behavior of   a model of
\emph{coalescing particles}, where particles disappear upon collision,
and the empirical measure of survivors is shown to converge to the
solution of a stochastic transport equation with a quadratic loss
term. 

\subsection{Coalescing particles and the limiting transport equation}

We consider $N$ inertial particles in $\T^2$, each governed by the
limiting equation \eqref{fine2}, and undergoing pairwise coalescence at rate $R_0$ when their
distance falls below a threshold $\delta$: when two particles collide,
one of them is removed. The surviving particles obey equation
\begin{equation}\label{fine3}
dx_i^\al(t)=(\lambda^{-1}\sigma)(x_i^\al(t))\circ dw(t)-g_\al(x_i^\al(t))\,dt,\ \ \ \ \ \ \ \  i\in I_N(t),
\end{equation}
with the same Brownian motion $w(t)$ for all particles (a passive-tracer
type interaction with a single random fluid), where $I_N(t)$ is the
set of surviving indices at time $t$. As established in \cite{FlaHuang}, the empirical measure
\[
\mu_t^N(dx)=\frac{1}{N}\sum_{i\in I_N(t)}\delta_{x_i^\al(t)}(dx)
\]
converges weakly in probability, as $N\to\infty$, to a measure
$f_\al(t,x)\,dx$, where $f_\al$ is the unique bounded weak solution of
the stochastic transport equation
\begin{equation}\label{fine33}
df_\al+\div\!\big(f_\al\,(\lambda^{-1}\sigma)\big)\circ dw_t-\div(f_\al\,g_\al)\,dt=-R_0\,f_\al^2\,dt,\qquad f_\al|_{t=0}=f_0.
\end{equation}
The first three terms on the left-hand side encode the usual
continuity equation associated with the SDE
\eqref{fine3}, while the right-hand side is the
quadratic loss term produced by coalescence. 

In contrast to the linear
continuity equation, \eqref{fine33} does not preserve mass. Actually,
the integral
\[
M_\al(t):=\int_{\T^2}f_\al(t,x)\,dx,
\]
is non-increasing in $t$, and the rate at which it decays measures
the effectiveness of the coalescence mechanism.
The basic question we address is how fast does $M_\al(t)$ go to
zero. A naive bound based on \eqref{fine33} alone is easily
obtained. Integrating in $x$ and  taking expectations,  one finds
\[
\frac{d}{dt}\E\,M_\al(t)\;\leq\;-\frac{R_0}{|\T^2|}\,\big(\E\,M_\al(t)\big)^2,
\]
which integrates to the algebraic decay
\[
\E\,M_\al(t)\;\lesssim\;\frac{|\T^2|}{R_0\,t},\qquad t\to\infty.
\]
This bound is universal, as  it makes no use of the structure of the
underlying SDE or of any geometric features of the data, but it is
only polynomial. Without further information on the dynamics, nothing
forces particles to come close, and the loss term $-R_0 f_\al^2$ alone
is insufficient to produce a faster rate. The point of the present
work is that, under suitable geometric assumptions on the data, the
inertial drift $g_\al$, emerging from the limiting procedure described above, actively concentrates particles on a
one-dimensional set, the separatrix of an underlying Hamiltonian, where collisions become unavoidable, and the resulting decay of
$M_\al(t)$ is \emph{exponential} in time, with a rate that we compute
explicitly in terms of $\al$ and the friction and noise amplitude on
that set.

\subsection{Setting and main result}

As mentioned above, we work in dimension $d=2$ on the torus $\T^2=\T\times\T$, with
$\T=\R/L\Z$. Here, we assume the diffusion $\sigma:\T^2\to\R^2$ is
\emph{tangent} to the level sets of a Hamiltonian $H:\T^2\to\R$. Namely
\begin{equation}\label{fine4}
\sigma(x)\cdot\nabla H(x)=0,\qquad x\in\T^2.\tag{T}
\end{equation}
In two dimensions, this forces $\sigma=\rho\,\xi$ with
$\xi:=\nabla^\perp H$ and $\rho\in C^2(\T^2)$, so that the noise is a
scalar amplification of the Hamiltonian vector field. The Hamiltonian
itself is taken to be {factorized},
\begin{equation}\label{fine42}
H(x)=H(x_1,x_2)=h_1(x_1)\,h_2(x_2),
\end{equation}
with $h_1,h_2\in C^3(\T)$ satisfying mild non-degeneracy and
convexity-type conditions: each $h_i$ has only simple zeros, only
non-degenerate critical points, and $h_i\,h_i''\leq 0$ on $\T$
(see conditions (H1), (H2) and (H3) in the body of the paper). These conditions
are satisfied, for instance, by the Greengard-Thomann fields (see \cite{MB})
\[H(x)=\sin x_1 \cos x_2,\ \ \ \ \ \ \ \ x_i\in\,\T=\R/2\pi\Z,\]
but, as we will show in detail, they  cover a much wider
class of profiles. The separatrix
$\mathcal S:=\{H=0\}=(Z_1\times\T)\cup(\T\times Z_2)$, where
$Z_i=\{h_i=0\}$, decomposes $\T^2$ into a finite union of open
{cells}, each containing a unique {center} (joint critical
point of $h_1$ and $h_2$).

We further impose an \emph{alignment} condition: the friction
$\lambda$ and the noise amplitude $\rho$ are constant on level sets
of $H$, i.e., there exist
$l,r \in\,C^2(\R)$, with
$l\geq l_0>0$ and $r^2\geq r_0>0$, such
that
\begin{equation}\label{fine43}
\lambda(x)=l(H(x)),\qquad \rho(x)=r(H(x)),\ \ \ \ \ x \in\,\T^2.\tag{A}
\end{equation}
In Lemma~\ref{fine10} it is proven that under (T), (A), and (H1), (H2) and (H3), the inertial drift $g_\al$ takes the
remarkably clean form
\begin{equation}\label{fine49}
g_\al(x)=\beta_\al(x)\,\nu(x)^2\,D\xi(x)\,\xi(x),
\end{equation}
where
\[\nu(x):=\rho(x)/\lambda(x),\ \ \ \ \ \ \beta_\al(x):=\al/(2(\lambda(x)+\al))=\al/(2(l(H(x))+\al)).\] 
Moreover, in Lemma \ref{fine25} it is proven that
\begin{equation}\label{fine49-bis}
\nabla H(x)\cdot D\xi(x)\,\xi=H(x)\,\Lambda(x),	
\end{equation}
with $\Lambda\geq 0$ under (H3).  
Expressions \eqref{fine49} and \eqref{fine49-bis} bring to light the geometric significance of $g_\al$. Geometrically, $D\xi\,\xi$ is the acceleration along the Hamiltonian
flow $\dot x=\xi(x)$, and  identity  \eqref{fine49-bis} shows that $g_\al$ has a non-negative
component in the direction of $\nabla H$ on $\{H>0\}$ and a non-positive
one on $\{H<0\}$. Equivalently, the deterministic drift $-g_\al$ in the
limiting SDE points toward the separatrix $\{H=0\}$ on both sides, away
from the centers where $|H|$ is maximal.

\smallskip
The main result of this  paper consists in showing that if the Hamiltonian H satisfies hypotheses (H1), (H2) and (H3) and  conditions \eqref{fine4} and  \eqref{fine43} hold, then, if we also assume that the   initial density $f_0$ is
supported away from the centers $M_1\times M_2$ of $H$,   for every
$\al>0$ there exists a constant $c_\al$ such that 
\begin{equation}\label{fine5}
\E\!\int_{\T^2}\!f_\al(t,x)\,dx
\;\leq\;\frac{c_\al}{R_0}\,\exp\!\left(-\frac{\al\,r^2(0)}{l^2(0)(l(0)+\al)}\,\bar\kappa\,t\right),\qquad t\geq 0,
\end{equation}
for some constant $\bar\kappa>0$ independent of $\al$ and $R_0$.

The decay rate has a transparent dependence on the parameters. As
$\al\to 0^+$, it vanishes proportionally to $\al$, consistently with
the fact that $g_\al\to 0$ in the Stratonovich limit, and there is no
inertial mechanism to force coalescence. As $\al\to\infty$, the rate
saturates at 
\[r^2(0)/l^2(0)\,\bar\kappa,\]
reflecting the fact that the inertial-It\^o drift is bounded uniformly
in $\al$ once $\al$ is large. The rate depends only on the values of
$l$ and $r$ {on the separatrix}: large
$r(0)$ enhances the decay (stronger noise drives particles
across the separatrix more vigorously), while large
$l(0)$ suppresses it (stronger friction damps the
noise-driven motion). The behavior of $l$ and
$r$ in the cell interiors enters only through the prefactor
$c_\al$ and the constant $\bar\kappa$.

It is worth noting that the assumption that $\supp f_0$ avoids the
centers cannot be removed. Actually, a particle initially placed exactly at a
center is a fixed point of the limiting flow $\dot x=\xi(x)$, and the
inertial drift vanishes there. The condition rules out this
non-generic configuration.

\subsection{Strategy of the proof}

The proof has two distinct stages. The first one is an estimate
\emph{at the level of the SDE}. Namely,  we show that the characteristic
flow $\phi_\al$ associated with the drift $-g_\al$ and the noise
$\lambda^{-1}\sigma$ contracts the Hamiltonian, in the sense that
\begin{equation}\label{intro2}
|H(\phi_\al(t,x))|\;\leq\;
|H(x)|\,\exp \left(-c_*\,\al\,t\right),\ \ \ \ \ t\geq 0,
\end{equation}
on the part of $\T^2$ away from the centers, with $c_*>0$ uniform.
The second stage transfers this contraction to the SPDE
\eqref{fine33} and combines it with the quadratic loss term
to produce the exponential decay of $M_\al(t)$. We sketch the main
ingredients in turn.

\smallskip
\noindent\emph{Step 1.} Under (T) and (A),
$g_\al$ takes the form \eqref{fine49}, and a direct
computation shows that
\[
\div\,g_\al(x)
\;=\;\frac{\al}{l(H(x))+\al}\,n^2(H(x))\,D_H(x)
\;-\;H(x)\,\Lambda(x)\,\Theta_\al(H(x)),\ \ \ \ x \in\,\T^2,
\]
where $n=r/l$,     $D_H:=-\det\Hess H$, $\Lambda$ is defined by the algebraic
identity \eqref{fine49-bis} proved in Lemma \ref{fine25}, and $\Theta_\al$ is an
explicit function of $H$. The crucial structural feature is that the
second term on the right hand side above  carries an explicit factor $H$ and {vanishes on the
separatrix}, so the sign of $\div\,g_\al$ near $\mathcal S$ is
controlled entirely by $D_H$.

\smallskip
\noindent\emph{Step 2.} The factorized
form of $H$, combined with (H1), (H2) and (H3), gives a complete description
of the geometry of $\mathcal S$ and of the cells. In each cell, we
identify three distinguished finite sets of points,
the \emph{centers} $M_1\times M_2$ (joint critical points of $H$, on
the cell interior), the \emph{corners} $Z_1\times Z_2\subset\mathcal S$
(saddles of $H$), and the \emph{midpoints}
$\mathcal M=(Z_1\times M_2)\cup(M_1\times Z_2)\subset\mathcal S$.
A key technical lemma shows that \[D_H(x)\geq 0,\ \ \ x \in\,\mathcal S,\ \ \ \ \ \ \ \ \{D_H=0\}\cap\mathcal S=\mathcal M,\] and that at each midpoint
$D_H$ vanishes to exact order $2$ in the longitudinal direction with
quantitative two-sided bounds. A second lemma shows that
\[\Lambda(x)\geq 0,\ \ \ \ x \in\,\T^2,\ \ \ \ \ \ \ \ \{\Lambda=0\}=M_1\times M_2,\] so that
$|H|$ is non-increasing along the characteristic flow.

\smallskip
\noindent\emph{Step 3.} 
The two pieces above
combine to a dichotomy: outside a small neighborhood of
$M_1\times M_2$ and small balls of radius $\vt$ around the midpoints,
$\div\,g_\al$ is bounded below by a positive constant times
$\al\vt^2$, while inside those balls $|\div\,g_\al|$ is at most of
order $\al\vt$. Optimizing over $\vt$ produces the contraction rate
$c_*\al$ at the level of the flow.

\smallskip
\noindent\emph{Step 4.} The
sign-control dichotomy is only useful if one can quantify the
{time} spent by the characteristic flow inside the bad balls
around each midpoint. Lifting to a single cell and projecting onto
the coordinate transverse to the separatrix at a midpoint, this reduces to
an occupation-time estimate of the form
\[
\E\Big(\!\int_s^{s+1}\ind_{[p-\vt,p+\vt]}(\phi_{\al,i}(r,x))\,dr\,\Big|\,\mathcal F_s\Big)\;\leq\;\kappa_0\,\vt,
\]
which says that the projected component $\phi_{\al,i}$ spends at most
a fraction $\kappa_0\vt$ of any unit time interval inside an interval
of width $2\vt$ around the relevant coordinate of the midpoint. The
proof is based  on {Tanaka's formula} applied to the continuous
semimartingale $\phi_{\al,i}(\cdot,x)$ at each level $a$. The
resulting expression for the local time $L^a$, combined with the
Lipschitz bound on $r\mapsto(r-a)^+$ and the cell-invariance property
that confines $\phi_{\al,i}$ to an interval of length $<2\pi$, yields
a uniform bound on $\E(L^a_{s+1}-L^a_s|\mathcal F_s)$; the
occupation-time formula then converts this into the desired bound on
the time spent in the projected interval. This is the technical heart
of the quantitative side of the argument. Namely,  it is what turns the
qualitative sign dichotomy into the explicit rate $c_*\al$.

\smallskip
\noindent\emph{Step 5.} The contraction \eqref{intro2} is then translated
to the density $f_\al$ via the characteristics representation. The
key observation is that the support of $f_\al$ is, on average, pushed
towards $\mathcal S$ at exponential rate, which forces the
$L^2$-norm of $f_\al$ along the characteristics to remain bounded
below by the $L^1$-norm raised to a power, and the loss term
$-R_0 f_\al^2$ then drives $M_\al(t)$ to zero exponentially. This
last step is carried out via a Gronwall-type estimate combined with a
careful localization argument near $\mathcal S$.

\subsection{Organization of the paper}

Section~\ref{sec3}  sets up
the precise framework, states all assumptions, and
recall the basic geometric facts. Proposition~\ref{fine17}
characterizes the structure of $h_i$ under (H1), (H2) and (H3), and
Proposition~\ref{fine10} establishes the reduction
\eqref{fine49} for $g_\al$. The main result,
Theorem~\ref{thm:main}, is stated at the end of that section.

Section~\ref{sec4} establishes the structural decomposition
of $\div\,g_\al$ into the bulk and boundary contributions, and proves
the key vanishing properties of $D_H$ on $\mathcal S$.
Section~\ref{fine64} proves the monotonicity of $|H|$ along
the characteristic flow and  carries out the sign-control
argument that yields the lower bound on $\div\,g_\al$ outside a thin
neighborhood of $\mathcal M$, and the corresponding upper bound on
the bad set.
Section~\ref{fine56} translates these estimates into the
contraction estimate at the level of the SDE. Section \ref{fine26} studies two aspects of the flow on a cell, the It\^o form and the non-degeneracy near midpoints.  
Section~\ref{sec8} transfers the contraction to the density
$f_\al$ and concludes the proof of Theorem~\ref{thm:main}.
Section~\ref{sec9}  discusses
possible extensions of the main result, in particular, beyond the
factorized form of $H$ and beyond the alignment condition (A), and discusses examples of $h_1,h_2$

\section{Notations,  assumptions and main result}
We introduce two functions  $h_1$ and $h_2$ in $C^3(\T)$ and define the Hamiltonian
\begin{equation}\label{fine6}H(x_1,x_2)=h_1(x_1)\,h_2(x_2),\ \ \ \ \ \ (x_1, x_2) \in\,\T^2.
\end{equation}
Throughout the paper, we shall impose that the functions $h_i$, for $i=1, 2$, satisfy the following conditions.
\begin{itemize}
\item[(H1)]  The function $h_i$ has only simple zeros. Namely, \[h_i(a)=0\Longrightarrow h_i'(a)\neq 0.\]

\item[(H2)] The function $h_i$ has only Morse critical points. Namely, 
\[h_i^\prime(b)=0\Longrightarrow h_i^{\prime \prime}(b)\neq 0.\]

\item[(H3)] The following convexity-type condition holds, 
\[h_i(x)\,h_i''(x)\leq 0,\ \ \ \ \ x\in\T.\]

\end{itemize}

Without loss of generality, we further assume the normalization
$\max_\T|h_i|=1$, for $i=1,2$. With this normalization, the range of $H$ is contained
in $[-1,1]$. In Section \ref{sec3}, we will discuss the properties of the Hamiltonian $H$, and we will provide a wide class of functions $h$ fulfilling conditions (H1), (H2) and (H3). In the meantime, we  introduce the following two sets associated with the functions $h_i$
\[
Z_i:=\{a\in\T: h_i(a)=0\},\qquad M_i:=\{b\in\T: h_i'(b)=0\}.
\]

Now, we introduce two structural conditions on $\sigma$ and $\lambda$. The first one is a {\em tangency} condition that ensures that the noise conserves $H$.
\begin{itemize}
\item[(T)] It holds
\[\sigma(x)\cdot\nabla H(x)=0,\ \ \ \ \ \ x\in\T^2.\]
\end{itemize}

In dimension~$2$, condition (T) means that $\sigma(x)$ is collinear with
$\xi(x):=\nabla^\perp H(x)$ at every point. Hence, there exists a $C^2$ scalar function $\rho:\T^2\to\R$ such that
\begin{equation}\label{fine7}
\sigma(x)=\rho(x)\,\xi(x),\qquad x\in\T^2.
\end{equation}

The second condition
ensures that $\lambda$ and $\rho$ are constant along level sets of $H$.
\begin{itemize}
\item[(A)]  There exist
$l,r\in C^2([-1,1])$, with $l(x)\geq l_0>0$ and $r^2(x)\geq r_0>0$, such that
\[
\lambda(x)=l(H(x)),\qquad \rho(x)=r(H(x)),\qquad x\in\T^2.
\]
\end{itemize}

To express $g_\al$ explicitly under conditions (T) and (A), we introduce the mappings 
\begin{equation}\label{fine8}n(h):=\frac{r(h)}{l(h)},\ \ \ h \in\,[-1,1],\ \ \ \ \ \nu(x):=\frac{\rho(x)}{\lambda(x)}=n(H(x)),\ \ \ \ \ \ x \in\,\T^2,
\end{equation}
and
\begin{equation}\label{fine9}
\beta_\alpha(x):=\frac{\al}{2(\lambda(x)+\al)}=\frac{\al}{2(l(H(x))+\al)},\ \ \ \ \ \ x \in\,\T^2.
\end{equation}
Notice that, as a consequence of condition (A), the vectors $\nabla \lambda(x)$, $\nabla \nu(x)$ and $\nabla \beta_\alpha(x)$ are all parallel to $\nabla H(x)$, for all $x \in\,\T^2$. 

With  these notations, equation \eqref{fine3} can be rewritten as
\begin{equation}\label{sfl8}
dx_\al(t)=\nu (x_\al(t))\xi(x_\al(t))\circ dw(t)-g_\al(x_\al(t))\,dt.
\end{equation}
Moreover,  equation \eqref{fine33} can be rewritten as
\begin{equation}\label{sfm10}
df_\alpha(x,t)  +\operatorname{div}(f_\alpha(x,t)
\nu(x)\,\xi(x))  \circ dw(t)-\operatorname{div}(
f_\alpha(  x,t)  g_\alpha(  x)  )  dt=-R_{0}%
f_\alpha^{2}(  x,t)  dt,
\end{equation}
and since $\operatorname{div}\xi(x)=0$ and $\nabla\nu(x)\cdot \xi(x)=0$,  we get 
\begin{equation}\label{sfn11}
\begin{array}{l}
\ds{df_\alpha(t,x)=-\nu(x)\xi(x)\cdot \nabla f_\alpha(t,x)\circ dB(t)}\\[10pt]
\ds{\quad \quad \quad \quad \quad \quad \quad +g_{\alpha}(x)\cdot \nabla f_\alpha(t,x)\,dt+\text{div}\,g_{\alpha}(x)\,f_\alpha(t,x)\,dt-R_{0}f_\alpha^{2}(t,x)\,dt.}	
\end{array}
	\end{equation}

According to the structural assumptions we have given, the inertial drift 
\begin{equation}\label{sfm56}g_\alpha(x)=\frac{\alpha}{2(\lambda(x)+\alpha)} \Gamma(x),\ \ \ \ \ \Gamma(x):=D(\lambda^{-1}\sigma)(x)(\lambda^{-1}\sigma)(x)+(\sigma \sigma^t)(x)\frac{\nabla \lambda(x)}{\lambda^3(x)},\end{equation}
has the following simplified expression.

\begin{Lemma}\label{fine10}
Under (T) and (A), with $\nu$ and $\beta_\alpha$ defined as in  \eqref{fine8} and 
\eqref{fine9}, respectively,  we have
\begin{equation}\label{fine11}
g_\al(x)=\beta_\alpha(x)\,\nu^2(x)\,D\xi(x)\xi(x),\ \ \ \ \ \ x \in\,\T^2.
\end{equation}
\end{Lemma}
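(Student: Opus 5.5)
The plan is to compute the two terms of $\Gamma$ in \eqref{sfm56} directly. I will use the representation $\sigma=\rho\,\xi$ from \eqref{fine7} together with the fact, noted after \eqref{fine9}, that (A) makes $\nabla\lambda$ and $\nabla\nu$ parallel to $\nabla H$. Since $\xi=\nabla^\perp H$ is orthogonal to $\nabla H$ at every point, I expect every term containing a derivative of $\lambda$ or $\nu$ to vanish after contraction with $\xi$. What remains is $\nu^2 D\xi\,\xi$, and multiplying by the prefactor $\al/(2(\lambda+\al))=\beta_\al$ gives \eqref{fine11}.

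\emph{First term.} By (T), $\lambda^{-1}\sigma=(\rho/\lambda)\,\xi=\nu\,\xi$, where $\nu=n(H)$ is $C^2$ because $l\geq l_0>0$. The product rule gives
\[
D(\nu\xi)=\xi\otimes\nabla\nu+\nu\,D\xi .
\]
Applying this to the vector $\nu\xi$ yields
\[
D(\nu\xi)(\nu\xi)=\nu\,(\nabla\nu\cdot\xi)\,\xi+\nu^2\,D\xi\,\xi .
\]
By (A), $\nabla\nu(x)=n'(H(x))\nabla H(x)$, and $\nabla H\cdot\xi=\nabla H\cdot\nabla^\perp H=0$. So the first summand vanishes and the first term of $\Gamma$ equals $\nu^2 D\xi\,\xi$.

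\emph{Second term.} From $\sigma=\rho\xi$ we get $\sigma\sigma^t=\rho^2\,\xi\otimes\xi$, hence
\[
(\sigma\sigma^t)\frac{\nabla\lambda}{\lambda^3}=\frac{\rho^2}{\lambda^3}\,(\xi\cdot\nabla\lambda)\,\xi .
\]
Again by (A), $\nabla\lambda=l'(H)\nabla H$, so $\xi\cdot\nabla\lambda=0$ and the whole term vanishes.

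Combining the two computations gives $\Gamma=\nu^2 D\xi\,\xi$, and therefore $g_\al=\beta_\al\,\nu^2\,D\xi\,\xi$. There is no real obstacle here. The only points needing care are the index convention for $D(\nu\xi)$, which should be the Jacobian acting on a vector so that $\xi\otimes\nabla\nu$ applied to $v$ gives $(\nabla\nu\cdot v)\,\xi$, and the regularity of $\nu$ and $\lambda$ required for the chain rule. Both follow from $l,r\in C^2$, $H\in C^3$ and $l\geq l_0>0$.
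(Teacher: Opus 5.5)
Your proposal is correct and follows the paper's proof essentially step for step. You substitute $\sigma=\rho\xi$ and $\lambda^{-1}\sigma=\nu\xi$, expand $D(\nu\xi)(\nu\xi)$ by the product rule, and eliminate both the $(\xi\cdot\nabla\nu)$ term and the $(\sigma\sigma^t)\nabla\lambda/\lambda^3$ term because (A) makes $\nabla\nu$ and $\nabla\lambda$ parallel to $\nabla H$, which is orthogonal to $\xi$; your remarks on the Jacobian convention and the $C^2$ regularity of $\nu=n(H)$ are accurate.
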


\begin{proof}
We substitute $\sigma=\rho\,\xi$ and $\lambda^{-1}\sigma=\nu\,\xi$ into
\eqref{sfm56}.
For the first term of $\Gamma(x)$, since $\nabla \nu(x)$ is parallet to $\nabla H(x)$, we get
\[
D(\nu\xi)(x)\,(\nu\xi)(x)=\nu(x)\,(\xi(x)\cdot\nabla\nu(x))\,\xi(x)+\nu^2(x)\,D\xi(x)\,\xi(x)=\nu^2(x)\,D\xi(x)\,\xi(x).
\]
For the second term of $\Gamma(x)$, we have \[(\sigma\sigma^\top)(x)\,v=\rho^2\,(\xi(x)\cdot v)\,\xi(x),\ \ \ \ \ v\in\R^2.\] Hence, if we take $v=\nabla\lambda(x)/\lambda^3(x)$, as $\nabla \lambda(x)$ is parallel to $\nabla H(x)$, we get
\[
(\sigma\sigma^\top)(x)\,\frac{\nabla\lambda(x)}{\lambda^3(x)}
=\frac{\rho^2(x)}{\lambda^3(x)}\,(\xi(x)\cdot\nabla\lambda(x))\,\xi(x)=0.
\]
This gives  $\Gamma(x)=\nu(x)^2\,D\xi(x)\,\xi(x)$ and   \eqref{fine11} follows.
\end{proof}

The present paper is devoted to the study of the long-time behavior of the space average of $f_\alpha(t,x)$ in $\mathbb{T}^2$, for every fixed $\alpha>0$. Our main result is stated here.

\begin{Theorem}\label{thm:main}
Let $H$ be the Hamiltonian defined in \eqref{fine6}, for some functions $h_1, h_2$  in $C^3(\T)$ satisfying
hypotheses (H1), (H2) and (H3). Assume that the diffusion $\sigma$ satisfies the tangentiality condition (T). Moreover, let the friction $\lambda:\T^2\to[ l_0,\infty)$
and the noise amplitude $\rho:\T^2\to(-\infty,\infty)$ be $C^2$ scalar
functions satisfying the alignment condition (A). 

Under these conditions, if $\supp f_0$ is disjoint from $M_1\times M_2$, the set of
the centers of $H$, for every
$\al>0$ there exists $c_\al>0$ such that
\begin{equation}\label{fin1}
\E\int_{\T^2}f_\al(t,x)\,dx\;\leq\;\frac{c_\al}{R_0}\,\exp\!\left(\!-\frac{\al\,r^2(0)}{l^2(0)(l(0)+\al)}\,\bar\kappa\,t\right),\ \ \ \ \ \ \ \  t\geq 0,
\end{equation}
for some constant  $\bar\kappa>0$   independent of  $\al$ or $R_0$.
\end{Theorem}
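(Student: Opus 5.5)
The plan follows the strategy outlined in the introduction, and it proceeds in three stages.

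\emph{Stage 1: structure of the drift.} Starting from Lemma~\ref{fine10}, I will compute $\div g_\al$ explicitly in the factorized setting \eqref{fine6}. Writing $g_\al=\beta_\al\nu^2 D\xi\,\xi$, the gradients of $\beta_\al$ and $\nu$ are parallel to $\nabla H$. Using the identity $\nabla H\cdot D\xi\,\xi=H\Lambda$ of Lemma~\ref{fine25}, together with $\div(D\xi\,\xi)=\operatorname{tr}((D\xi)^2)=-2\det D\xi$, I obtain
\[
\div g_\al=\tfrac{\al}{l(H)+\al}\,n^2(H)\,D_H-H\Lambda\,\Theta_\al(H),
\]
where $D_H=-\det\Hess H$. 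For $H=h_1h_2$ one has $D_H=(h_1'h_2')^2-h_1h_1''h_2h_2''$. By (H3), the second term is $\le 0$. On $\mathcal S$, say $h_1=0$, this gives $D_H=h_1'^2h_2'^2\ge0$, which vanishes exactly on the midpoints by (H1)--(H2). Near a midpoint, $D_H$ grows quadratically in the longitudinal variable. I will also prove $\Lambda\ge0$ with zero set $M_1\times M_2$, by writing $\Lambda$ as a sum of terms $-h_i h_i''(\cdot)^2\ge0$.

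\emph{Stage 2: contraction of $H$ along characteristics.} Using Itô's formula for $H(\phi_\al(t,x))$, the Stratonovich noise term vanishes, since $\xi\cdot\nabla H=0$. This gives
\[
\frac{d}{dt}H(\phi_\al)=-\beta_\al\nu^2 H\Lambda,
\]
so $|H|$ is non-increasing and each cell is invariant. On a compact region away from the centers, $\Lambda\ge\Lambda_0>0$, which directly yields \eqref{intro2} with rate of order $\al r^2(0)/(l^2(0)(l(0)+\al))$ near $\mathcal S$, by continuity of $l$ and $r$ at $0$. Because $\supp f_0$ avoids $M_1\times M_2$ and cells are invariant, the flow started from $\supp f_0$ never approaches the centers. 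So this pathwise bound holds on all of the relevant support, and in particular $\phi_\al(t,\supp f_0)\subset\{|H|\le Ce^{-c\al t}\}$.

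\emph{Stage 3: transfer to the density.} By the characteristics representation of \eqref{sfn11}, along $X_t=\phi_\al(t,x)$ the quantity $F_t=f_\al(t,X_t)$ satisfies
\[
\dot F=\div g_\al(X_t)\,F-R_0F^2.
\]
Let $J_t=\exp\!\big(\int_0^t\div g_\al(X_s)\,ds\big)$, which is the Jacobian factor relating $f_\al$ to the pushforward density. Then
\[
\frac{1}{F_t}=\frac{1}{f_0J_t}+R_0\int_0^t\frac{J_s}{J_t}\,ds .
\]
The mass is $M(t)=\int f_0(x)/(1+R_0f_0\int_0^tJ_s\,ds)\,dx$. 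It therefore suffices to show that $\int_0^tJ_s\,ds$ grows exponentially in expectation (or with high probability), with exponent of the claimed order. This is where the dichotomy enters: outside $\vt$-balls around midpoints and away from centers, $\div g_\al\ge c\,\al n^2(0)\vt^2/(l(0)+\al)$ near $\mathcal S$, while inside those balls $|\div g_\al|\lesssim\al\vt$. I will control the time spent in the bad balls via the occupation-time bound
\[
\E\Big(\int_s^{s+1}\ind_{[p-\vt,p+\vt]}(\phi_{\al,i})\,dr\,\Big|\,\mathcal F_s\Big)\le\kappa_0\vt,
\]
proved by Tanaka's formula on the transverse coordinate, using cell confinement and nondegeneracy of the martingale part near midpoints. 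Combining this with a conditional exponential-martingale (Jensen) argument gives $\E J_t^{-1}\lesssim e^{-\kappa t}$. Choosing $\vt$ small optimizes $\kappa$, and then Jensen plus $M(t)\le\int dx/(R_0\int_0^tJ_s)$ gives \eqref{fin1}.

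\emph{Main obstacle.} I expect the hardest step to be the occupation-time estimate near midpoints. There, the diffusion coefficient of the transverse coordinate must be shown to be bounded below, which fails at corners. The bound must also be turned into an exponential moment bound on $\int\div g_\al$ rather than just a bound in mean.
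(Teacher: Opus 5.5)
Your plan follows the paper's route step for step: the formula for $\operatorname{div} g_\al$, monotonicity of $|H|$ and cell invariance, the characteristics formula for the mass, the good/bad dichotomy near $\mathcal S$, and the Tanaka occupation bound. However, the quantitative balance you rely on does not close. You use three bounds: $\operatorname{div} g_\al\ge c_2\chi_\al\vt^2$ on the good set, $|\operatorname{div} g_\al|\le c_1\chi_\al\vt$ on the $\vt$-balls, and an expected occupation fraction of those balls of at most $\kappa_0\vt$ per unit time. With these, the gain you can guarantee per unit time is of order $c_2\chi_\al\vt^2$, while the loss can only be bounded by $c_1\kappa_0\chi_\al\vt^2$. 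Both are of order $\vt^2$. The occupation bound is sharp in $\vt$, because the coordinate along the separatrix is a nondegenerate diffusion there. So whether the net rate is positive is decided by a fixed inequality between constants ($c_2$ larger than a fixed multiple of $c_1\kappa_0$). Shrinking $\vt$ does not affect that inequality, and nothing in the argument guarantees it; \emph{choosing $\vt$ small} therefore does not produce a rate. The paper's own final computation contains the same slip: it asserts that $\vt\le c_2/(16c_1\kappa_0)$ implies $4c_1\kappa_0\vt^2\le c_2\vt^2/4$, which is false.

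The missing observation is that only a \emph{lower} bound on $\operatorname{div} g_\al$ enters, and on all of $E_\eta=\{|H|\le\eta\}$ that lower bound is of order $\chi_\al\eta$, not $\chi_\al\vt$. Indeed $D_H=(h_1'h_2')^2-H\,h_1''h_2''\ge-\|h_1''\|_\infty\|h_2''\|_\infty|H|$, and $|H\,\Lambda\,\Theta_\al(H)|\le C\chi_\al|H|$ uniformly in $\al$. Hence $\operatorname{div} g_\al\ge-C\chi_\al\eta\ge-Cc_*\chi_\al\vt^2$ on the bad set when $\eta\le c_*\vt^2$. Keeping the good-set bound and the occupation bound unchanged, the expected penalty per unit window becomes $O(\chi_\al\vt^3)$. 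For $\vt$ small this gives $\E(\exp(-\int_s^{s+1}\operatorname{div} g_\al(\phi_\al(r,x))\,dr)\,|\,\mathcal F_s)\le\exp(-\frac{1}{4}c_2\chi_\al\vt^2)$, which is what yields a rate of the form $\chi_\al\bar\kappa$.

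The step you flag as the main obstacle is also not handled by what you propose. Jensen's inequality goes the wrong way for bounding $\E(e^{\gamma Z}|\mathcal F_s)$ through $\E(Z|\mathcal F_s)$. What works, and what the paper does, is the following. First wait until the deterministic time $T_{\eta,\al}$, after which every trajectory from $\supp f_0$ lies in $E_\eta$; the cost of $[0,T_{\eta,\al}]$ is a constant absorbed into $c_\al$. Then cut time into unit windows $[s,s+1]$, on which the occupation fraction $Z=\int_s^{s+1}\ind_{F_{\eta,\vt}}(\phi_\al(r,x))\,dr$ lies in $[0,1]$. Convexity gives $e^{\gamma Z}\le1+(e^\gamma-1)Z$, so only $\E(Z|\mathcal F_s)\le\kappa_0\vt$ is needed, and the windows are chained by the tower property.

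Likewise, in the last step Jensen over $[0,t]$ only gives $(\int_0^tJ_s\,ds)^{-1}\le t^{-2}\int_0^tJ_s^{-1}\,ds$, hence an $O(t^{-2})$ bound. Use instead the pathwise estimate $\int_0^tJ_s\,ds\ge J_t(1-e^{-Ct})/C$, with $C\ge\sup\operatorname{div} g_\al$, which reduces the mass bound to $\E J_t^{-1}$. Note also that exponential growth of $\E\int_0^tJ_s\,ds$, as you first suggest, would not suffice: you need decay of a negative moment.
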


\begin{Remark}{\em 
The function
\[\al\mapsto \frac{\al\,r^2(0)}{l^2(0)(l(0)+\al)},\]
 vanishes, as $\al\to 0^+$, and saturates at $r^2(0)/l^2(0)$, as $\al\to\infty$. Moreover, it  is monotonically increasing with the same shape as $\al/(l_0+\al)$, up to a multiplicative factor.

For small $\al>0$,
\[
\frac{\al\,r^2(0)}{l^2(0)(l(0)+\al)}\;\sim\;\frac{\al\,r^2(0)}{l^3(0)}.
\]
 Thus, the decay
rate is enhanced by a factor $r^2(0)/l^3(0)$, at
small~$\al$. The dependence on the separatrix values can be interpreted as follows.
\begin{itemize}
\item[-] Large $r(0)$:  strong noise amplitude on the separatrix 
\emph{enhances} the decay, because the noise is what drives the
particle towards the cell boundaries where coalescence happens.
\item[-] Large $l(0)$:  strong friction on the separatrix 
\emph{suppresses} the decay, because friction damps the noise-driven
motion.
\end{itemize}

\smallskip

For large $\al$, the rate saturates at
$r^2(0)/l^2(0)\cdot\bar\kappa$.

\smallskip

 Notice that only the
values at $0$ of the profiles $l$ and $r$
enter the rate. The behavior of $l$ and
$r$ in the cell interiors is absorbed into the constants
$\bar\kappa$ and $c_\al$. }\end{Remark}

\section{Geometric characterization of the Hamiltonian and a few examples}
\label{sec3}

We have the following geometric characterization.

\begin{Proposition}\label{fine17}
A function $h\in C^3(\T)$ satisfies conditions (H1), (H2) and (H3) if and only if there
exists an integer $N\geq 1$ such that
\begin{itemize}
\item[1.] $h$ has exactly $2N$ zeros $a_1<a_2<\cdots<a_{2N}$ in $\T$,
all simple;
\item[2.] $h$ has exactly $2N$ critical points $b_1,\ldots,b_{2N}$,
interlaced with the zeros of $h$,  one $b_j$ in each open interval
$(a_j,a_{j+1})$. Moreover,  at each $b_j$, we have $h(b_i)h''(b_j)< 0$;
\item[3.] on each interval $(a_j,a_{j+1})$, $h$ has constant sign and $|h|$
is concave.
\end{itemize}

\end{Proposition}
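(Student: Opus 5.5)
The plan is to prove the two implications separately. The forward direction carries the content; the converse is a direct check.

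\emph{(H1)--(H3) $\Rightarrow$ 1--3.} First I show that $Z=\{h=0\}$ is nonempty. If $h$ had no zeros, say $h>0$ on $\T$ (the case $h<0$ is symmetric), then (H3) gives $h''\leq 0$ on all of $\T$. Since $h'$ is periodic, $\int_\T h''=0$, and together with $h''\le 0$ this forces $h''\equiv 0$. Then $h'$ is constant, and periodicity of $h$ makes $h'\equiv 0$. So every point would be a critical point with $h''=0$, contradicting (H2). Next, by (H1) every zero is simple, hence isolated, so by compactness there are finitely many. At each simple zero $h$ changes sign. Going once around the circle, the number of sign changes must be even, so the zeros are $a_1<\dots<a_{2N}$ with $N\geq1$. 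This gives item 1. On each open arc $(a_j,a_{j+1})$ (indices mod $2N$), $h$ has no zeros and so has constant sign; this is the first part of item 3.

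Now fix an arc, say with $h>0$ there (the other case follows by replacing $h$ with $-h$). By (H3), $h''\le 0$ on the arc, so $h=|h|$ is concave there; this completes item 3. Since $h$ vanishes at both endpoints and is positive inside, it attains an interior maximum, so there is at least one critical point in the arc. Because $h'$ is nonincreasing, the set $\{h'=0\}\cap(a_j,a_{j+1})$ is a closed interval $[b,b']$. If $b<b'$, then $h''=0$ on $(b,b')$, which contradicts (H2). Hence the critical point in the arc is a single point $b_j$. At $b_j$ we have $h''(b_j)\le0$ and, by (H2), $h''(b_j)\neq0$; so $h''(b_j)<0$ and $h(b_j)h''(b_j)<0$. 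Finally, no zero $a_j$ is a critical point, by (H1). So the critical points are exactly $b_1,\dots,b_{2N}$, one in each arc, interlaced with the zeros. This is item 2.

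\emph{1--3 $\Rightarrow$ (H1)--(H3).} Condition (H1) is immediate from item 1. For (H2), item 2 says the only critical points are the $b_j$, and there $h(b_j)h''(b_j)<0$ forces $h''(b_j)\ne0$. For (H3), on each arc $h$ has constant sign $s\in\{\pm1\}$, so $|h|=sh$ is $C^3$ there. Concavity of $|h|$ gives $s h''\le0$, hence $hh''=s|h|\,h''\le 0$ on the arc. At the zeros $a_j$ we have $hh''=0$. So (H3) holds on all of $\T$.

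The only delicate step is the first one, excluding the zero-free case, because there (H3) alone does not suffice and the periodicity argument has to be combined with (H2). The uniqueness of the critical point in each arc also uses (H2) in an essential way, to rule out a flat interval of critical points.
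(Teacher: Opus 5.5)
Your proof is correct and follows the same route as the paper. Finiteness and evenness of the zero set come from (H1), the zero-free case is excluded via $\int_\T h''=0$ together with (H2), concavity of $|h|$ on each arc comes from (H3), and an interior maximum on each arc yields the critical point $b_j$. Your use of (H2) to rule out a flat set $\{h'=0\}\cap(a_j,a_{j+1})=[b,b']$ with $b<b'$ is more careful than the paper, which claims that concavity alone forces uniqueness of the maximum and of the critical point, and your explicit check of the converse supplies what the paper calls immediate.
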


\begin{proof}
It is immediate to check that if conditions 1, 2 and 3 are satisfied, then (H1), (H2) and (H3) hold true.

Now, assume that (H1), (H2) and (H3) hold. In view of  (H1), the zeros of $h$ are simple, hence
isolated. Being $\T$  compact, they are finite. Since $h$ is continuous on $\T$ and changes sign at
each simple zero, the number of zeros must be even, say $2N$.
Moreover, $N\geq 1$. Actually, if $h>0$ on all of $\T$, (H3) gives
$h''\leq 0$ everywhere, so 
\[\int_\T h''(x)\,dx\leq 0.\] But by periodicity this implies
\[\int_\T h''(x)\,dx=h'(2\pi)-h'(0)=0,\] forcing $h''\equiv 0$. Then
$h$ is linear and periodic, hence constant. But then every point is a
critical point, with $h''\equiv 0$, violating (H2). The case $h<0$
on $\T$ is symmetric. So $N\geq 1$.

Property 3. is essentially (H3) restated on each interval.
On the interval $(a_j,a_{j+1})$ where $h$ has constant sign condition  (H3) gives $h''(x)\leq 0$, if $h>0$,  and $h''(x)\geq 0$, if $h<0$. So $|h|$ is concave on the interval.
By concavity, $|h|$ has a unique maximum on $[a_j,a_{j+1}]$. Since
$h(a_j)=h(a_{j+1})=0$, this maximum is interior, at some
$b_j\in(a_j,a_{j+1})$, where $h'(b_j)=0$. By (H2), $h''(b_j)\neq 0$ and has the opposite sign
of $h(b_j)$, consistent with the strict concavity at the peak.
Concavity of $|h|$ also forces uniqueness. 
\end{proof}

\medskip

A useful way to construct examples is via Sturm--Liouville-type ODEs.
\begin{Proposition}\label{prop3.3}
Let $W\in C^1(\T)$, with $W>0$ on $\T$. Suppose $h\in C^3(\T)$ is a
non-trivial real-valued periodic solution of
\begin{equation}\label{fine19}h''(x)+W(x)\,h(x)=0,\ \ \ \ \ x\in\T.
\end{equation}
Then $h$ satisfies conditions (H1), (H2) and (H3). 

On the other hand, assume that $h\in C^3(\T)$ is non-trivial and satisfy conditions (H1), (H2) and (H3). If we define
$W:\T\to\R$ by
\begin{equation}\label{fine20}W(x):=\begin{cases}\,-h''(x)/h(x)&\text{if }h(x)\neq 0,\\[2pt]\,-h'''(x)/h'(x)&\text{if }h(x)=0,\end{cases}
\end{equation}
then $W\in C^1(\T)$, $W\geq 0$ on $\T$, and $h$ is a solution of equation \eqref{fine19}.
\end{Proposition}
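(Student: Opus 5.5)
For the direct implication, let $h$ be a non-trivial periodic solution of \eqref{fine19} with $W\in C^1(\T)$ and $W>0$. Then $h''=-Wh$, so $h''\in C^1$ and $h\in C^3$ is consistent.

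\emph{(H3).} This is immediate, since $h\,h''=-W h^2\leq 0$.

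\emph{(H1).} Suppose $h(a)=0$ and $h'(a)=0$. Uniqueness for the linear second-order ODE with continuous coefficient then forces $h\equiv 0$, which contradicts non-triviality. Hence every zero is simple.

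\emph{(H2).} Suppose $h'(b)=0$. Then $h(b)\neq 0$ by the previous step, because a zero at which $h'$ also vanishes would again give $h\equiv0$. Therefore $h''(b)=-W(b)h(b)\neq 0$, since $W(b)>0$.

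For the converse, assume (H1)--(H3) and define $W$ by \eqref{fine20}.

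\emph{Non-negativity.} At points where $h\neq0$, (H3) gives $W=-h''/h=-hh''/h^2\geq0$. At a zero $a$ we have $h'(a)\neq0$ by (H1), so the second branch of \eqref{fine20} is well defined.

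\emph{Smoothness.} Near $a$ we want $W\in C^1$. First, $h''(a)=0$: by (H3), $h''$ has the opposite sign of $h$ on each side of $a$, and $h$ changes sign at a simple zero, so continuity of $h''$ forces $h''(a)=0$. Writing $h(x)=(x-a)\,p(x)$ and $h''(x)=(x-a)\,q(x)$ with Hadamard's lemma, we have $p(a)=h'(a)\neq0$ and $q(a)=h'''(a)$. Hence $-q/p$ is continuous and agrees with the second branch at $a$, so $W$ is continuous.

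\emph{Main obstacle.} The $C^1$ regularity of $W$ is the delicate step. The quotient representation above gives only $q\in C^0$ when $h\in C^3$, so $q/p$ is merely $C^0$ near $a$. $C^1$ at $a$ would need $h\in C^4$, or one has to accept $W\in C^0$ there. My plan is to use the Taylor expansion carefully and check whether the hypotheses can be sharpened. If they cannot, I would note that the natural conclusion under the stated hypotheses is continuity of $W$ at zeros, with $C^1$ away from zeros, where $W=-h''/h$ is $C^1$ because $h\in C^3$.

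\emph{Non-negativity at zeros and the equation.} $W(a)\geq 0$ follows as a limit of non-negative values. Finally, $h''+Wh=0$ holds trivially where $h\neq0$, and at zeros both $h''$ and $h$ vanish. Note that the converse only yields $W\geq0$, not $W>0$, consistent with the statement.
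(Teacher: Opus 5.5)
Your forward direction is the paper's argument: uniqueness for the linear ODE rules out double zeros, then $h''(b)=-W(b)h(b)\neq 0$ at critical points, and $hh''=-Wh^2\leq 0$. The paper does not prove the converse (it is ``left to the reader''), and the parts of it you do prove are correct. Namely, $h''(a)=0$ at each zero follows from (H3) and the sign change of $h$. The Hadamard factorizations $h=(x-a)p$, $h''=(x-a)q$ give continuity of $W$, with value $-h'''(a)/h'(a)$ at $a$. Non-negativity passes to the isolated zeros by continuity, and the equation holds trivially.

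Your ``main obstacle'' is not a gap in your reasoning but an error in the statement. Settle it with a counterexample rather than leaving it as a plan: for $h\in C^3$, $W$ is in general not $C^1$ at the zeros of $h$. Take $\T=\R/2\pi\Z$, a smooth cutoff $\chi$ equal to $1$ near $0$ with $\supp\chi\subset(-\tfrac12,\tfrac12)$, and
\[
h(x)=\sin x+\epsilon\,\chi(x)\,x|x|^{5/2}.
\]
Then $h\in C^3(\T)$, since the third derivative of $x|x|^{5/2}$ is $\tfrac{105}{8}|x|^{1/2}$. For $\epsilon$ small, (H1)--(H3) hold. Outside $\supp\chi$ nothing changes. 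On $(-\tfrac12,\tfrac12)$ the perturbation and its first two derivatives are $O(\epsilon|x|^{7/2})$, $O(\epsilon|x|^{5/2})$, $O(\epsilon|x|^{3/2})$, so $h$ has the sign of $x$, $h''$ has the sign of $-x$, and $h'>0$. Near $0$,
\[
W(x)=\frac{\sin x-\tfrac{35}{4}\,\epsilon\, x|x|^{1/2}}{\sin x+\epsilon\, x|x|^{5/2}}=1-\tfrac{35}{4}\,\epsilon\,|x|^{1/2}\bigl(1+o(1)\bigr),
\]
while $W(0)=-h'''(0)/h'(0)=1$, so $W$ is not differentiable at $0$.

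The correct conclusion under the stated hypotheses is exactly your fallback, $W\in C(\T)\cap C^1(\T\setminus\{h=0\})$. If one assumes $h\in C^4$, then $p,q$ in your factorization are $C^1$ and $W=-q/p\in C^1(\T)$. Nothing downstream is affected: the forward direction only needs $W$ continuous (for uniqueness) and positive, and the paper only uses the forward direction (Example~\ref{fine23}).
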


\begin{proof}
Equation  \eqref{fine19}  is a second-order linear ODE with
$C^1$ coefficient $W$.  The
zero function is the unique solution with initial data $(0,0)$. Hence if
$h(a)=h'(a)=0$ we would have $h\equiv0$,  contradicting the assumption that $h$ is non-trivial.
Therefore, all zeros of $h$ are simple. 

Next, at any $b$ with $h'(b)=0$, we would have $h(b)\neq 0$, and the ODE
gives $h''(b)=-W(b)h(b)$. Since $W(b)>0$ and $h(b)\neq 0$, we obtain $h''(b)\neq 0$.
Finally, we have $h(x)h''(x)=-W(x)h^2(x)\leq 0$, since $W(x)\geq 0$ and this implies (H3).

We omit the proof of the converse and we leave it  to the reader.
 \end{proof}

\subsection*{Concrete examples}

In what follows, we list  several families of functions $h$ satisfying conditions (H1), (H2), and (H3).

\begin{Example}\label{fine21}
{\em For any $k\in\Z\setminus\{0\}$ and $\varphi\in\R$,  the Greengard-Thomann function 
\[
h(x)=\sin(kx+\varphi),\ \ \ \ \ x \in\,\T,
\]
satisfies (H1), (H2) and (H3), with wave number $N=|k|$. Indeed, $h''(x)=-k^2 h(x)$, so
$h''(x)+k^2h(x)=0$. This is the Sturm-Liouville form, with constant $W(x)= k^2>0$. The zeros  of $h$ are at
$kx+\varphi\in\pi\Z$, with $h'(x)=k\cos (k x+\varphi)\neq 0$ there. The
critical points are at $kx+\varphi\in\pi/2+\pi\Z$, with $h''(x)=-k^2 h(x)\neq 0$. Finally, 
$h(x) h''(x)=-k^2h^2(x)\leq 0$.  See Figure \ref{fig:example3.3}, for the level curves of $H(x)$, where $h_{1}(x_{1})=\sin x_{1}$ and $h_{2}(x_{2})=\sin(x_{2}+\pi/2)=\cos(x_{2})$. }      
\end{Example}

\begin{Example}\label{fine22}
{\em The function
\[
h(x)=\big(1+\tfrac14\cos x\big)\sin x
\]
satisfies (H1), (H2) and (H3), with wave number $N=1$. Indeed, $1+\tfrac14\cos x\in[\tfrac34,\tfrac54]$ is positive on $\T$, so $h(x)$ has the same sign as $\sin x$, with simple zeros at $0$ and $\pi$. One computes $h''(x)=-\sin x(1+\cos x)$, so \[h(x)h''(x)=-\sin^2 x(1+\tfrac14\cos x)(1+\cos x)\leq 0,\ \ \ \ \ x \in\,\T,\] with equality only where $h$ vanishes. The critical points solve $h'=\cos x+\tfrac14\cos 2x=0$, which has exactly two solutions in $\T$ (one in the interval $(0,\pi)$, and the other  one in the interval $(\pi,2\pi)$), at each of which $h''\neq 0$ by direct check.
Notice that this example is genuinely beyond pure trigonometric.     See Figure \ref{fig:example3.4}, for the level curves of $H(x)$, where $h_{1}(x_{1})=(1+\frac{1}{4}\cos x_{1})\sin x_{1}$ and $h_{2}(x_{2})=\cos x_{2}$. }             
\end{Example}


\begin{figure}[htbp]
	\centering
	
	\begin{minipage}{0.49\textwidth}
		\centering
		\includegraphics[width=\linewidth]{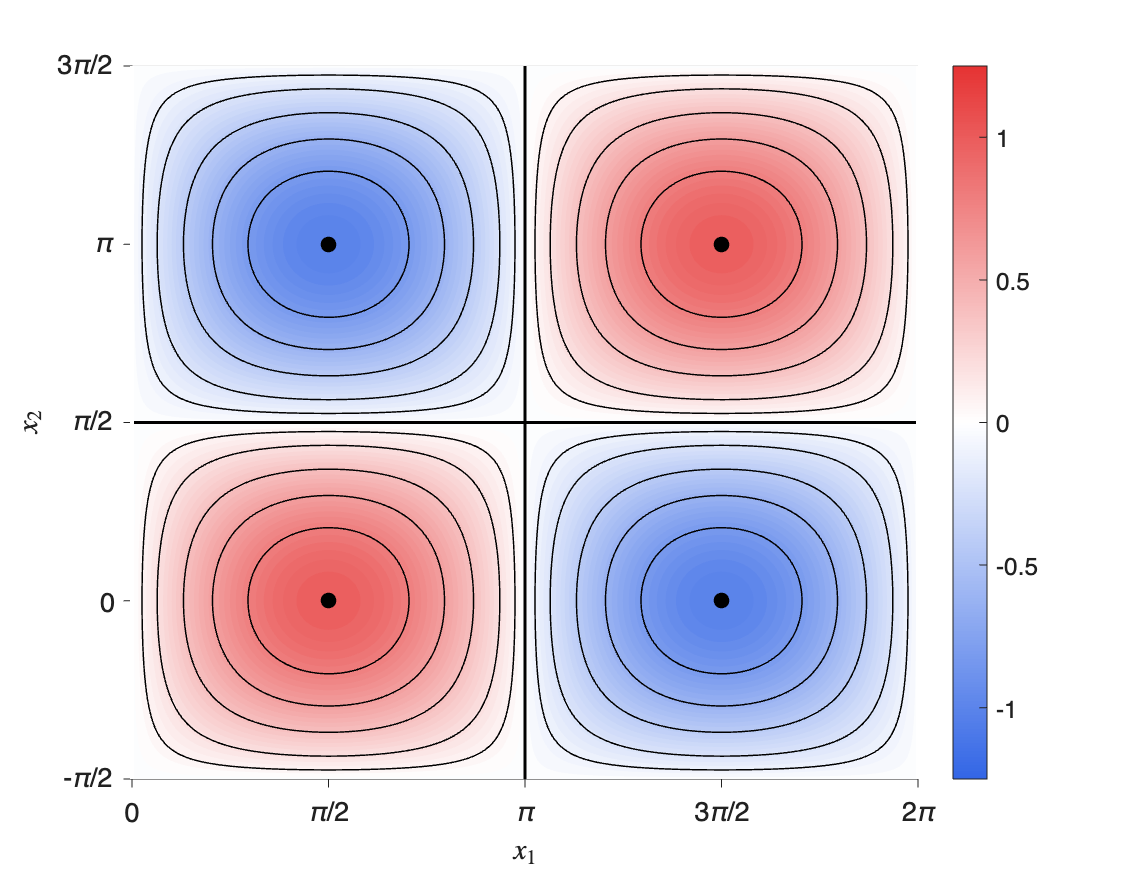}
		\captionsetup{justification=centering}
		\caption{\ \\[10pt]$H(x)=\sin x_1 \cos x_2$}
		\label{fig:example3.3}
	\end{minipage}
	\hfill
	\begin{minipage}{0.49\textwidth}
		\centering
		\includegraphics[width=\linewidth]{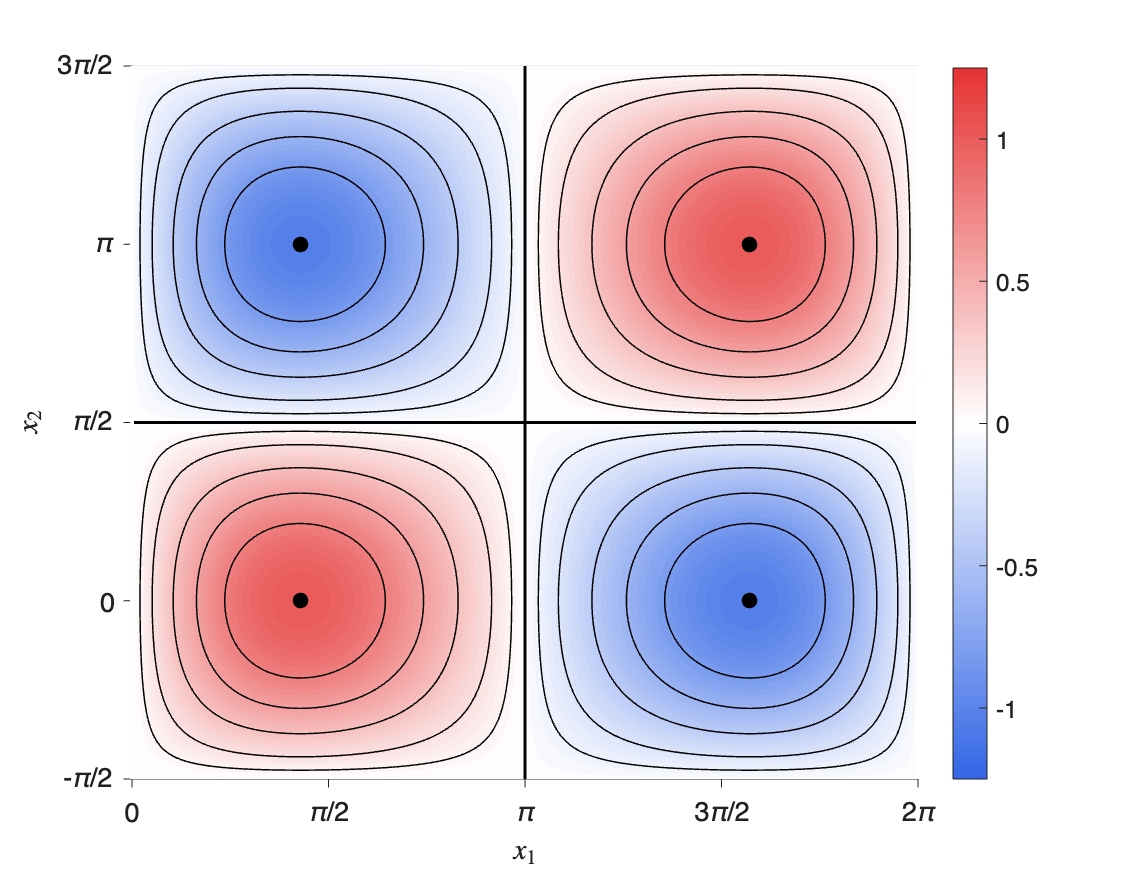}
		\captionsetup{justification=centering}
		\caption{\ \\[10pt]$H(x)=\bigl(1+\tfrac{1}{4}\cos x_{1}\bigr)\sin x_{1}\cos x_{2}$}
		\label{fig:example3.4}
	\end{minipage}
	
\end{figure}


\begin{Example}\label{fine23}
{\em Let $V\in C^1(\T)$ be arbitrary. The eigenvalue problem
\[
-h''(x)+V(x)h(x)=\lambda\,h(x)\quad\text{on }\T
\]
has a discrete set of eigenvalues $ l_0\leq\lambda_1\leq\lambda_2\leq\cdots\to\infty$ with eigenfunctions in $C^3(\T)$.
Thus, if for every $n\geq 0$ such that $\lambda_n>\max_\T V$, we take the   eigenfunction
$h_n$ at $\lambda_n$, we have that it satisfies (H1), (H2) and (H3) thanks to 
Proposition ~\ref{prop3.3} applied to $W:=\lambda_n-V>0$.
Since $\lambda_n\to\infty$ while $V$ is bounded, this gives infinitely
many examples for every $V$. For any non-constant $V$, these are
 non-trigonometric.}
\end{Example}

\subsection{Geometry of $H$ under (H1), (H2) and (H3).}
We recall that we have defined
\[
Z_i=\{a\in\T: h_i(a)=0\},\qquad M_i=\{b\in\T: h_i'(b)=0\}.
\]
Each of set $Z_i$ and $M_i$ is finite and $Z_i\cap M_i=\emptyset$.
Indeed, by (H1), at every $a\in Z_i$ one has $h_i'(a)\neq 0$, so $h_i$
is strictly monotone in a neighborhood of $a$ and $a$ is an isolated
point of $Z_i$. Since $\T$ is compact, an isolated-point set is finite,
so $Z_i$ is finite. By (H2), the same argument applied to $h_i'$ at
every $b\in M_i$ gives that $M_i$ is finite. Finally, if $a\in Z_i\cap M_i$ then $h_i(a)=h_i'(a)=0$, contradicting (H1); hence
$Z_i\cap M_i=\emptyset$.

The critical points of $H$ on $\T^2$ are of two types, centers and corners.
\begin{Definition}
The \emph{centers} of the Hamiltonian $H$are all points  $p=(p_1,p_2)\in M_1\times M_2$.	
\end{Definition}
If $p$ is a center, then
$h_1'(p_1)=h_2'(p_2)=0$, so the off-diagonal entries of
$\text{Hess} H(p)$ vanish, and the diagonal entries are $h_1''(p_1)h_2(p_2)$
and $h_1(p_1)h_2''(p_2)$. By Proposition ~\ref{fine17}, $h_i(p_i)h_i''(p_i)<0$, i.e.,
$\operatorname{sgn}(h_i''(p_i))=-\operatorname{sgn}(h_i(p_i))$. The two
diagonal entries therefore have signs
\[
\operatorname{sgn}(h_1''(p_1)h_2(p_2))=-\operatorname{sgn}(h_1(p_1))\operatorname{sgn}(h_2(p_2))=-\operatorname{sgn}(H(p)),
\]
and similarly for the second diagonal entry. So $\text{Hess}\, H(p)$ is
sign-definite (negative-definite if $H(p)>0$, positive-definite if
$H(p)<0$), and $p$ is a non-degenerate local extremum of $H$.

\begin{Definition}
The \emph{corners} of the Hamiltonian $H$ are the elements of the set
$Z_1\times Z_2$.
\end{Definition}
In this case, due to (H1) we have
\[
\det\text{Hess} H(q)=-(h_1'(q_1))^2(h_2'(q_2))^2<0
\]
so $q$ is a  saddle of $H$.
\begin{Definition}
The \emph{separatrix} of the Hamiltonian $H$ is the set $\mathcal S:=\{H=0\}=(Z_1\times\T)\cup(\T\times Z_2)$.	
\end{Definition}
It is easy to check that $\mathcal S$ is  a finite union of vertical and horizontal lines meeting at the
corners. The complement $\T^2\setminus\mathcal S$ is a finite disjoint
union of open rectangular cells $\mathcal C_{ij}$ on which $H$ has
constant sign.

Each cell contains exactly one center. Actually, if we denote by 
 $\mathcal C$ the  cell with lifted rectangle
$[a_1,b_1]\times[a_2,b_2]$, where $a_1,b_1$ are consecutive zeros of
$h_1$ and $a_2,b_2$ are consecutive zeros of $h_2$, we have
\[
(M_1\times M_2)\cap\overline{\mathcal C}=(M_1\cap[a_1,b_1])\times(M_2\cap[a_2,b_2])=(M_1\cap(a_1,b_1))\times(M_2\cap(a_2,b_2))
\]
where the second equality uses $M_i\cap Z_i=\emptyset$. Thus, since that each
factor has exactly one element by Proposition~\ref{fine17}.2, we conclude that $\mathcal C$ contains exactly one center.

\begin{Definition}
The \emph{midpoints} of the separatrix $\mathcal{S}$ are the elements of the set
$\mathcal M:=(Z_1\times M_2)\cup(M_1\times Z_2).$
\end{Definition}
This is a finite set, disjoint from the corners $Z_1\times Z_2$ and from
the centers $M_1\times M_2$. Moreover, the set $\overline{\mathcal C}$ contains exactly four midpoints
of $\mathcal M$: two of type $Z_1\times M_2$ and two of type $M_1\times Z_2$.

\section{Key algebraic identities and inequalities}\label{sec4}
We start from the following fundamental equality.
\begin{Lemma}\label{fine25}
For every $x \in\,\T^2$ it holds\begin{equation}\label{fine34}
\nabla H(x)\cdot D\xi(x)\,\xi(x)\;=\;H(x)\,\Lambda(x),
\end{equation}
where
\begin{equation}\label{fine44}
\Lambda(x):=2(h_1')^2(x_1)(h_2')^2(x_2)-(h_1')^2(x_1) (h_2 h_2'')(x_2)-(h_1 h_1'')(x_1)(h_2')^2(x_2).
\end{equation}
Moreover
\begin{equation}\label{fine45}
\text{{\em div}}(D\xi\,\xi)(x)\;=\;2\,D_H(x),\end{equation}where
\[D_H(x):=(h_1')(x_1)^2(h_2')^2(x_2)-(h_1 h_1'')(x_1)\,(h_2 h_2'')(x_2).
\]
\end{Lemma}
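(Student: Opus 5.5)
The plan is a direct computation in coordinates, using the factorized form \eqref{fine6}. I fix the convention $\xi=\nabla^\perp H=(-\partial_2H,\partial_1H)$. The opposite sign convention changes nothing, because $D\xi\,\xi$ is quadratic in $\xi$. Then
\[
\xi=\big(-h_1h_2',\;h_1'h_2\big),\qquad
D\xi=\begin{pmatrix}-h_1'h_2' & -h_1h_2''\\ h_1''h_2 & h_1'h_2'\end{pmatrix},
\]
where arguments $x_1$ are suppressed in $h_1$ and $x_2$ in $h_2$. Note that $\operatorname{tr}D\xi=0$ and $\det D\xi=\det\Hess H$.

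\emph{Step 1: the identity \eqref{fine34}.} Multiplying out gives
\[
D\xi\,\xi=\Big(h_1h_1'\big((h_2')^2-h_2h_2''\big),\;h_2h_2'\big((h_1')^2-h_1h_1''\big)\Big).
\]
I then dot this with $\nabla H=(h_1'h_2,\,h_1h_2')$. Each of the two products contains the factor $h_1h_2=H$. Factoring it out leaves
\[
(h_1')^2\big((h_2')^2-h_2h_2''\big)+(h_2')^2\big((h_1')^2-h_1h_1''\big),
\]
which is exactly $\Lambda$ in \eqref{fine44}. This step is purely algebraic.

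\emph{Step 2: the identity \eqref{fine45}.} The slickest route is the general formula
\[
\div(A\,v)=\operatorname{tr}\big((Dv)^2\big)+v\cdot\nabla\div v
\]
for $A=Dv$ and $v=\xi$. Since $\div\xi=0$, this gives
\[
\div(D\xi\,\xi)=\operatorname{tr}\big((D\xi)^2\big)=(\operatorname{tr}D\xi)^2-2\det D\xi=-2\det\Hess H.
\]
For the factorized $H$,
\[
\det\Hess H=h_1''h_2\,h_1h_2''-(h_1'h_2')^2,
\]
so $-\det\Hess H=D_H$, which yields \eqref{fine45}.

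As a cross-check, I would also differentiate the explicit components from Step 1. The $\partial_1$ derivative of the first component and the $\partial_2$ derivative of the second each produce four terms. The mixed terms $\pm h_1h_1''(h_2')^2$ and $\pm(h_1')^2h_2h_2''$ cancel pairwise, leaving $2(h_1')^2(h_2')^2-2h_1h_1''h_2h_2''$.

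There is no real obstacle here; everything is finite bookkeeping. The only point needing care is keeping the signs and the $x_1$/$x_2$ dependence straight in $D\xi$, and the trace identity route avoids most of that.
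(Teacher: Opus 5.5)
Your proof is correct. Step~1 is essentially the paper's own computation: the same formula for $D\xi\,\xi$, dotted with $\nabla H$, after which $h_1h_2=H$ factors out and leaves $\Lambda$.

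For \eqref{fine45} your main argument takes a different route.
\begin{itemize}
\item The paper simply differentiates the two explicit components of $D\xi\,\xi$, each a product of a function of $x_1$ and a function of $x_2$, and adds. This is exactly your cross-check.
\item You instead use the general identity $\div(Dv\,v)=\operatorname{tr}\big((Dv)^2\big)+v\cdot\nabla\div v$ with $\div\xi=0$. You then apply $\operatorname{tr}(A^2)=(\operatorname{tr}A)^2-2\det A$ for $2\times 2$ matrices and $\det D\xi=\det\Hess H$. The product form enters only at the end, to identify $-\det\Hess H$ with $D_H$.
\end{itemize}

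Your route is coordinate-free and valid for any $H\in C^3(\T^2)$. It therefore proves directly the interpretation $D_H=-\det\Hess H$ of Remark~\ref{fine50}, which is what Section~\ref{sec9} relies on when discussing non-factorized Hamiltonians. The paper's route is shorter once the components from Step~1 are available, but it is tied to the product structure.

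By contrast, \eqref{fine34} genuinely needs the factorization, in your argument as in the paper's. Differentiating $\xi\cdot\nabla H\equiv 0$ along $\xi$ gives $\nabla H\cdot D\xi\,\xi=-\xi^{\top}\Hess H\,\xi$. At regular points of $\{H=0\}$ this vanishes only where that curve has zero curvature, as the straight coordinate lines forming $\mathcal S$ do here.
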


\begin{proof}
By direct computation, we get
\[\begin{array}{l}
\ds{
D\xi\,\xi=\big(h_1h_1'\,(h_2')^2-h_1h_1'\,h_2h_2'',\;(h_1')^2\,h_2h_2'-h_1h_1''\,h_2h_2'\big).}
	\end{array}
\]
Thus, the dot product $\nabla H\cdot (D\xi\xi)$ expands to
\[h_1h_2[(h_1')^2(h_2')^2-h_1h_1''(h_2')^2]+h_1h_2[(h_1')^2(h_2')^2-(h_1')^2h_2h_2'']
=H\Lambda.\] 
For \eqref{fine45}, the first component of $D\xi\xi$ is a
function of $x_1$ times a function of $x_2$, and similarly for the
second. Thus,  differentiating and adding yields
\[\text{div}(D\xi\xi)=2\big((h_1')^2(h_2')^2-h_1h_1''h_2h_2''\big)=2\,D_H.\]
\end{proof}

\begin{Remark}\label{fine50}
{\em According to hypothesis (H3), we have $\Lambda\geq 0$ on $\T^2$. The function $D_H$ admits the geometric interpretation
\[D_H(x)=-\det\big(\text{Hess}\, H(x)\big).\] Moreover, it is immediate to check that
\begin{equation}
\label{sfm1}
2D_H(x)=\Lambda(x)+[(h_1')^2-h_1 h_1''](x_1)(h_2 h_2'')(x_2)+[(h_2')^2-h_2 h_2''](x_2)(h_1 h_1'')(x_1).	
\end{equation}
}
\end{Remark}

Next, we investigate the structure of $\text{div}\,g_\alpha$. 
\begin{Lemma}\label{fine51}Under (T) and (A), we have
\begin{equation}\label{fine52}
\text{{\em div}}\,g_\al(x)\;=\;\frac{\al}{l(H(x))+\al}\,n^2(H(x))\,D_H(x)
\;-\;H(x)\,\Lambda(x)\,\Theta_\alpha(H(x)),
\end{equation}
where the scalar profile $\Theta_\alpha\in C^1([-1,1])$ is given by
\begin{equation}\label{fine57}
\Theta_\alpha(h):=\frac{\al\,l'(h)\,n^2(h)}{2\,(l(h)+\al)^2}\;-\;\frac{\al\,n(h)\,n'(h)}{l(h)+\al}.
\end{equation}
\end{Lemma}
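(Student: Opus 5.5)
The plan is to write $g_\al$ as a scalar function of $H$ multiplied by the vector field $D\xi\,\xi$, and then apply the product rule for the divergence. By Lemma~\ref{fine10}, $g_\al(x)=\varphi_\al(H(x))\,D\xi(x)\xi(x)$, where
\[
\varphi_\al(h):=\frac{\al\,n^2(h)}{2\,(l(h)+\al)},\qquad h\in[-1,1].
\]
This uses (A) together with \eqref{fine8} and \eqref{fine9}, which give $\beta_\al\nu^2=\varphi_\al\circ H$. Since $l,r\in C^2([-1,1])$ and $l\geq l_0>0$, we have $n=r/l\in C^2$, hence $\varphi_\al\in C^2$. The product rule $\div(\psi V)=\psi\,\div V+\nabla\psi\cdot V$ then gives
\[
\div g_\al(x)=\varphi_\al(H(x))\,\div(D\xi\,\xi)(x)+\varphi_\al'(H(x))\,\nabla H(x)\cdot D\xi(x)\,\xi(x).
\]

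The two terms are then handled with Lemma~\ref{fine25}. By \eqref{fine45}, the first term equals $2\varphi_\al(H)D_H=\frac{\al}{l(H)+\al}n^2(H)D_H$, which is exactly the first term of \eqref{fine52}. By \eqref{fine34}, the second term equals $\varphi_\al'(H)\,H\,\Lambda$. It remains to identify $\varphi_\al'=-\Theta_\al$. Differentiating the quotient,
\[
\varphi_\al'(h)=\frac{\al\,n(h)n'(h)}{l(h)+\al}-\frac{\al\,n^2(h)\,l'(h)}{2(l(h)+\al)^2},
\]
which is precisely $-\Theta_\al(h)$ as defined in \eqref{fine57}. This yields \eqref{fine52}.

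Finally, $\Theta_\al=-\varphi_\al'\in C^1([-1,1])$, because $\varphi_\al\in C^2$. There is no real obstacle here. The only points needing care are the bookkeeping of the factor $1/2$ in $\beta_\al$ against the factor $2$ in \eqref{fine45}, and the regularity needed to differentiate $\varphi_\al\circ H$: $H\in C^3$ and $\varphi_\al\in C^2$, so $g_\al\in C^1$ and its divergence is classical.
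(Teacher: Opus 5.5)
Your proposal is correct and follows essentially the same route as the paper: apply the product rule to $g_\al=\beta_\al\nu^2\,D\xi\,\xi$, use \eqref{fine45} for the divergence term and \eqref{fine34} for the gradient term. The only difference is bookkeeping: you differentiate the single profile $\varphi_\al\circ H$ by the chain rule, while the paper expands $\nabla(\beta_\al\nu^2)=\nu^2\nabla\beta_\al+2\beta_\al\nu\nabla\nu$. Your packaging also yields $\Theta_\al=-\varphi_\al'\in C^1([-1,1])$ directly.
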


\begin{proof}
In view of Lemma~\ref{fine10},  under  conditions (T) and (A) we have $g_\al=\beta_\alpha \,\nu^2\,D\xi\xi$. Hence, if we compute the divergence of both sides, we get 
\[
\text{div} g_\al=\nabla(\beta_\alpha\nu^2)\cdot D\xi\xi+\beta_\alpha\nu^2\,\text{div}(D\xi\xi).
\]
Thanks to \eqref{fine45}, the second term above gives \[2\beta_\alpha(x)\nu^2(x)\,D_H(x)=\frac{\al}{l(H(x))+\al}\,n^2(H(x))\,D_H(x),\] which corresponds to the first term on the right hand side in \eqref{fine52}.

For the first term, expand $\nabla(\beta_\alpha\nu^2)=\nu^2\,\nabla\beta_\alpha+2\beta_\alpha\nu\,\nabla\nu$. Under (A),
\[
\nabla\beta_\alpha=-\frac{\al\,l'(H)}{2\,(l(H)+\al)^2}\,\nabla H,\qquad
\nabla\nu=n'(H)\,\nabla H,
\]
both parallel to $\nabla H$. By using \eqref{fine34}, this yields
\[
\nabla\beta_\alpha\cdot D\xi\xi=-\frac{\al\,l'(H)}{2(l(H)+\al)^2}\,H\Lambda,\quad
\nabla\nu\cdot D\xi\xi=n'(H)\,H\Lambda.
\]
Therefore
\[
\nabla(\beta_\alpha\nu^2)\cdot D\xi\xi=H\Lambda\bigg(-\frac{\al\,l'(H)\,n^2(H)}{2(l(H)+\al)^2}+2\beta_\alpha(H)n(H)n'(H)\bigg)=-H\Lambda\,\Theta_\alpha(H),
\]
recalling how $\beta_\alpha$ and $\Theta_\alpha$ are  defined. Combining this with the second term,  \eqref{fine52} follows.
\end{proof}

\begin{Remark}\label{fine46}
{\em  According to Lemma \ref{fine51}, we have
\begin{equation}
\label{sfm12}
\sup_{x \in\,\T^2}\vert\div g_\alpha(x)\vert \leq \frac{c\,\alpha}{ l_0+\alpha}.	
\end{equation}
When  $\sigma=\lambda\xi$, we have $m\equiv 1$, so that $n'\equiv 0$. Then  \eqref{fine57} reduces to \[\Theta_\alpha(h)=\frac{\al l'(h)}{2(l(h)+\al)^2},\]
 and formula \eqref{fine52} becomes
\[
\text{div}\, g_\al(x)=\frac{\al}{\lambda(x)+\al}D_H(x)-\frac{\al\,l'(H(x))\,H(x)\,\Lambda(x)}{2(\lambda(x)+\al)^2}.
\]

When $l=r=1$, then $\Theta_\alpha= 0$ and 
\[\text{div}\, g_\al(x)=\frac{\al}{1+\al}\,D_H(x).\]

Note that the term $H(x)\,\Lambda(x)\,\Theta_\alpha (H(x))$ has a factor $H$ and therefore vanishes on the separatrix $\mathcal S=\{H=0\}$. This is the critical structural feature exploited in the sign-control argument.}
\end{Remark}

Now, we describe the sign and vanishing of the mapping   $D_H$.

\begin{Lemma}\label{fine47}The function $D_H$ is strictly negative at every center $p=(p_1,p_2)\in M_1\times M_2$. Moreover, 
$D_H\geq 0$ on the separatrix $\mathcal S$, with $\{D_H=0\}\cap\mathcal S=\mathcal M$.
\end{Lemma}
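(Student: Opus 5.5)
The plan is to evaluate the explicit formula
\[
D_H(x)=(h_1')^2(x_1)(h_2')^2(x_2)-(h_1h_1'')(x_1)\,(h_2h_2'')(x_2)
\]
from Lemma~\ref{fine25} directly at the relevant points. Everything reduces to sign information on the one-dimensional factors, supplied by (H1), (H3) and Proposition~\ref{fine17}.

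\emph{Centers.} Let $p\in M_1\times M_2$. Then $h_1'(p_1)=h_2'(p_2)=0$, so the first term vanishes and
\[
D_H(p)=-(h_1h_1'')(p_1)\,(h_2h_2'')(p_2).
\]
By Proposition~\ref{fine17}.2, each factor $h_i(p_i)h_i''(p_i)$ is strictly negative. Their product is therefore strictly positive, and $D_H(p)<0$. Equivalently, this is the observation already made in Section~\ref{sec3} that $\Hess H(p)$ is definite, combined with $D_H=-\det\Hess H$ from Remark~\ref{fine50}.

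\emph{Separatrix.} Take $x\in\mathcal S$. By symmetry assume $x_1\in Z_1$; the case $x_2\in Z_2$ is identical with the indices swapped. Then $h_1(x_1)=0$, so the product term vanishes and
\[
D_H(x)=(h_1')^2(x_1)\,(h_2')^2(x_2)\geq 0.
\]
By (H1), $h_1'(x_1)\neq0$, so $D_H(x)=0$ if and only if $h_2'(x_2)=0$, that is, $x_2\in M_2$, i.e.\ $x\in Z_1\times M_2$. The symmetric case gives $M_1\times Z_2$.

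Putting the two cases together, $\{D_H=0\}\cap\mathcal S=(Z_1\times M_2)\cup(M_1\times Z_2)=\mathcal M$. For the reverse inclusion, note that every point of $\mathcal M$ lies in $\mathcal S$, since each has one coordinate in some $Z_i$, and the computation above shows $D_H$ vanishes there. Corners, where both $h_i$ vanish, give $D_H=(h_1')^2(h_2')^2>0$, which is consistent with the statement.

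There is no real obstacle here. The only point requiring care is to use the strict inequality $h_i(b)h_i''(b)<0$ at critical points. This comes from Proposition~\ref{fine17}, equivalently from (H2) together with (H3), and not merely from the weak inequality in (H3). It is exactly what yields strict negativity at the centers.
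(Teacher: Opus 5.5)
Your proof is correct and follows the paper's argument essentially verbatim. At a center the gradient term of $D_H$ vanishes and the product of the two strictly negative factors $(h_ih_i'')(p_i)$ gives $D_H(p)<0$. On $Z_1\times\T$ (and symmetrically on $\T\times Z_2$) the product term drops out, leaving $D_H=(h_1')^2(h_2')^2\ge 0$, which by (H1) vanishes exactly on $Z_1\times M_2$. Your explicit check of the reverse inclusion, and your remark that the strict sign at centers needs Proposition~\ref{fine17} (i.e.\ (H2) together with (H3)) rather than (H3) alone, are small but welcome additions.
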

\begin{proof}
At a center $p=(p_1, p_2)$, we have $h_1'(p_1)=h_2'(p_2)=0$, and this implies
\[(h_ih_i'')(p_i)<0,\ \ \ \ \ \ i=1,2.\]
Moreover,
\[D_H(p)=-(h_1h_1'')(p_1)\,(h_2h_2'')(p_2),\] 
and hence
  $D_H(p)<0$.
Next, on $\{h_1=0\}$ we have $h_1h_1''=0$, so that \[D_H=(h_1')^2(h_2')^2,\] which is
non-negative and strictly positive  off $M_2$, due to fact that the zeros of $h_1$ are all simple. Moreover, it vanishes exactly when $h_2'=0$, i.e.\ at $Z_1\times M_2$. We can proceed symmetrically on
$\{h_2=0\}$.
\end{proof}

Next, we describe how $D_H$ vanishes around the midpoints of the separatrix.

\begin{Lemma}\label{fine48}
At each midpoint $q\in\mathcal M$, the function $D_H$ vanishes at exact order $2$,
and is non-negative on a neighborhood of $q$ intersected with $\mathcal S$.
More precisely, for $q=(q_1,p_2)\in Z_1\times M_2$ there exist positive constants
$c_1(q), c_2(q), r_q$ such that
\begin{equation}\label{fine63}c_1(q)(x_2-p_2)^2-c_2(q)|x_1-q_1|\;\leq\;D_H(x)\;\leq\;
c_2(q)\big((x_2-p_2)^2+|x_1-q_1|\big),
\end{equation}
for $|x-q|<r_q$.
An analogous statement  holds at the  midpoints in
$M_1\times Z_2$.
\end{Lemma}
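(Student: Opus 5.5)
We work near a midpoint $q=(q_1,p_2)\in Z_1\times M_2$; the case $M_1\times Z_2$ follows by exchanging the roles of the coordinates. The plan is to Taylor-expand the two factor terms of
\[
D_H(x)=(h_1')^2(x_1)(h_2')^2(x_2)-(h_1h_1'')(x_1)(h_2h_2'')(x_2)
\]
separately in $x_1$ around $q_1$ and in $x_2$ around $p_2$, using the product structure. The relevant facts at the midpoint are the following. By (H1), $h_1(q_1)=0$ and $h_1'(q_1)\neq0$. By (H2) and Proposition~\ref{fine17}.2, $h_2'(p_2)=0$ and $h_2(p_2)h_2''(p_2)<0$. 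Moreover $p_2\notin Z_2$, since $Z_2\cap M_2=\emptyset$.

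\emph{First term.} Since $h_2\in C^3$ and $h_2'(p_2)=0$, we have $h_2'(x_2)=h_2''(p_2)(x_2-p_2)+O((x_2-p_2)^2)$. Hence
\[
\tfrac12 h_2''(p_2)^2(x_2-p_2)^2\le (h_2')^2(x_2)\le 2h_2''(p_2)^2(x_2-p_2)^2
\]
for $|x_2-p_2|$ small. Because $h_1'(q_1)\ne0$ and $h_1'$ is continuous, $(h_1')^2(x_1)\in[\tfrac12 h_1'(q_1)^2,\,2h_1'(q_1)^2]$ for $|x_1-q_1|$ small. Together these give
\[
(h_1')^2(x_1)(h_2')^2(x_2)\asymp (x_2-p_2)^2,
\]
with explicit constants.

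\emph{Second term.} Since $h_1(q_1)=0$ and $h_1\in C^1$, $|h_1(x_1)|\le C|x_1-q_1|$, and $h_1''$ is bounded, so $|(h_1h_1'')(x_1)|\le C|x_1-q_1|$. Also $h_2h_2''$ is bounded near $p_2$; in fact it is negative there, though only boundedness is needed. Hence
\[
|(h_1h_1'')(h_2h_2'')|\le c\,|x_1-q_1|.
\]

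Combining the two estimates with the triangle inequality gives both inequalities in \eqref{fine63}, with $c_1(q)=\tfrac12 h_1'(q_1)^2h_2''(p_2)^2$, $c_2(q)$ the larger of the upper constants, and $r_q$ chosen so that the Taylor remainders are controlled on the ball $|x-q|<r_q$.

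The remaining qualitative claims follow directly. On $\mathcal S$ near $q$ we have $x_1=q_1$: the horizontal line $\{x_2\in Z_2\}$ does not pass near $q$, since $p_2\notin Z_2$, after shrinking $r_q$. There the bounds read $c_1(x_2-p_2)^2\le D_H\le c_2(x_2-p_2)^2$, which gives nonnegativity (consistent with Lemma~\ref{fine47}) and vanishing of exact order $2$ in the longitudinal direction.

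There is no real obstacle. The only care needed is the choice of $r_q$: it must be small enough that the quadratic lower bound on $(h_2')^2$ holds, that $h_1'$ stays away from zero, and that the ball avoids the other separatrix lines. Each of these follows from continuity and the finiteness of $Z_i$ and $M_i$.
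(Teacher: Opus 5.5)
Your proof is correct and is essentially the paper's argument: you Taylor-expand in the product coordinates so that $(h_1')^2(h_2')^2$ is comparable to $(x_2-p_2)^2$ and $(h_1h_1'')(h_2h_2'')=O(|x_1-q_1|)$, and your multiplicative two-sided bounds are only a tidier way of keeping track of the paper's additive remainder terms. One trivial slip: with both factor windows equal to $[\tfrac12,2]$, the lower constant you actually obtain is $c_1(q)=\tfrac14\,h_1'(q_1)^2h_2''(p_2)^2$ rather than $\tfrac12\,h_1'(q_1)^2h_2''(p_2)^2$, which does not affect the conclusion.
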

\begin{proof}
Thanks to (H1), near the point $q=(q_1,p_2)$ we have \[h_1(x_1)=h_1'(q_1)(x_1-q_1)+O((x_1-q_1)^2),\ \ \ \ \ \ h_1'(q_1)\neq 0,\]  so that
\[(h_1h_1'')(x_1)=O(x_1-q_1).\]
Moreover, in view of (H1) and (H2), we have
\[h_2(x_2)=h_2(p_2)+\frac 12 h_2^{\prime\prime}(p_2)(x_2-p_2)^2+O((x_2-p_2)^3),\ \ \ \ \ \ h_2(p_2)\neq 0,\]
and
\[h_2'(x_2)=h_2''(p_2)(x_2-p_2)+O((x_2-p_2)^2),\ \ \ \ \ \ h_2''(p_2)\neq 0,\] so that
\[(h_2')^2(x_2)=(h_2''(p_2))^2(x_2-p_2)^2+O((x_2-p_2)^3),\]
and
\[h_2(x_2) h^{\prime\prime}(x_2)=h_2(p_2)h^{\prime\prime}(p_2)+O(x_2-p_2).
\] 
All this yields
\[
D_H(x)=(h_1'(q_1))^2(h_2''(p_2))^2(x_2-p_2)^2+O\big(|x_1-q_1|+(x_2-p_2)^2|x_1-q_1|+(x_2-p_2)^3\big),
\]
giving \eqref{fine63} for $|x-q|<r_q$, for some $r_q>0$ sufficiently small.
\end{proof}


 	If we take, for instance, $H(x)=\sin x_{1}\cos x_{2}$ (see Example \ref{fine21}) and $\lambda(x)\equiv1$, then one can easily verify that 
\begin{equation*}
	\text{div}\ g_{\alpha}(x) = \frac{\alpha}{1+\alpha}\big(\cos^{2}x_{1}+\sin^{2}x_{2}-1\big),
\end{equation*}
whose level curves, when $\alpha=1$, are shown in Figure \ref{fig:div(g)}.  

\begin{figure}[htbp]
	\centering
	
	\begin{minipage}{0.9\textwidth}
		\centering
		\includegraphics[width=\linewidth]{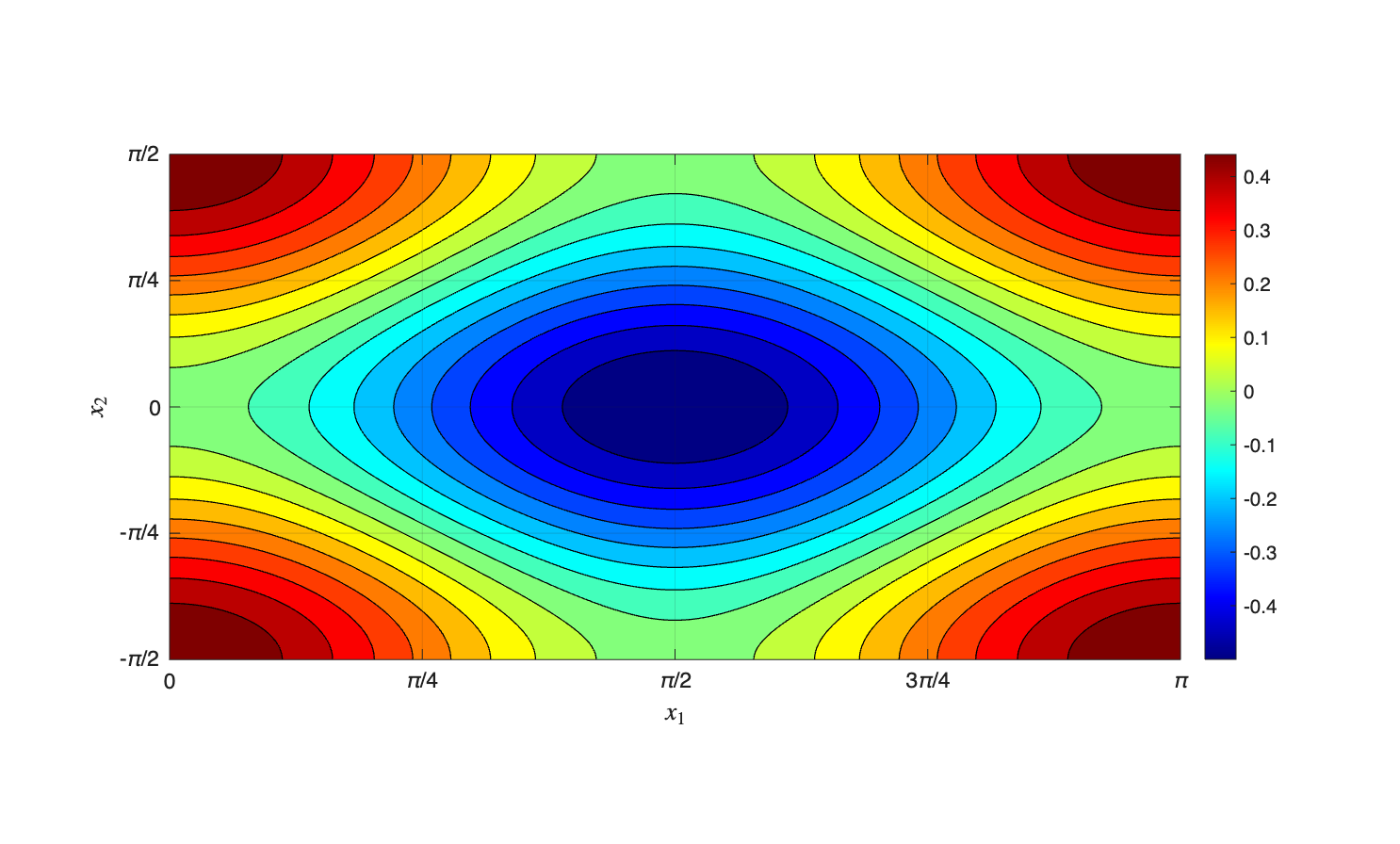}
		\caption{Level curves of $\mathrm{div}\, g_\alpha(x) = \tfrac{1}{2}(\cos^2 x_1 + \sin^2 x_2 - 1)$}
		\label{fig:div(g)}
	\end{minipage}
	
\end{figure}

\section{Monotonicity of $H$ along the flow and control of $g_\al$}\label{fine64}

In Theorem \ref{thm:main}, we assume that the initial density $f_0$ satisfies the following condition.
\begin{itemize}
\item[(S)] The support of $f_0$ does not contain any center point for $H$.	
\end{itemize}

In view of condition (S), we can find  $\mathfrak{r}>0$ small enough such that   the balls $B(p,\mathfrak{r})$ around the
centers $p\in M_1\times M_2$ are pairwise disjoint and
\begin{equation}\label{fine14}
\text{supp}\, f_0\;\subset\;D_{\mathfrak{r}}:=\T^2\setminus\bigcup_{p\in M_1\times M_2}B(p,\mathfrak{r}).
\end{equation}
In what follows, we shall denote
\begin{equation}\label{fine35}
\gamma_{\mathfrak{r}}:=\max_{x\in D_{\mathfrak{r}}}|H(x)|.
\end{equation}

\begin{Lemma}\label{fine36}
For every   $x \in\,\T^2$ we have $\Lambda(x)\geq 0$. Moreover, if $x \in\,D_{\mathfrak{r}}$  we have $\Lambda(x)>0$,  and
\[
\Lambda_{*}(\mathfrak{r}):=\min_{x\in D_{\mathfrak{r}}}\Lambda(x)>0.
\]
\end{Lemma}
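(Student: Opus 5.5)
The plan is to read off the sign of $\Lambda$ directly from formula \eqref{fine44}, using (H3), and then identify its zero set exactly. Write $A_i:=(h_i')^2(x_i)\geq 0$ and $B_i:=(h_ih_i'')(x_i)$. By (H3), $B_i\leq 0$. Then
\[
\Lambda(x)=2A_1A_2+A_1(-B_2)+(-B_1)A_2 ,
\]
which is a sum of three non-negative terms. This gives $\Lambda\geq 0$ on all of $\T^2$ at once.

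Next I will show that $\{\Lambda=0\}=M_1\times M_2$. If $x\in M_1\times M_2$, then $A_1=A_2=0$, so every term vanishes. Conversely, suppose $\Lambda(x)=0$. Then each of the three terms must vanish, since they are all non-negative. I split into two cases.

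\emph{Case $A_1>0$.} Then $A_1A_2=0$ forces $A_2=0$, that is $x_2\in M_2$. Also $A_1B_2=0$ forces $B_2=0$. But at a critical point $x_2\in M_2$ we have $h_2(x_2)\neq 0$, because $Z_2\cap M_2=\emptyset$, and $h_2''(x_2)\neq 0$ by (H2). Indeed Proposition~\ref{fine17}, item 2, gives the strict inequality $(h_2h_2'')(x_2)<0$. This contradicts $B_2=0$.

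\emph{Case $A_1=0$.} Then $x_1\in M_1$, and by the same reasoning $B_1<0$. The vanishing of $B_1A_2$ then forces $A_2=0$, that is $x_2\in M_2$, so $x\in M_1\times M_2$.

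Finally, $D_{\mathfrak r}$ is the complement in $\T^2$ of a finite union of open balls around the centers. It is therefore compact and disjoint from $M_1\times M_2$, so $\Lambda>0$ on $D_{\mathfrak r}$. Since $h_i\in C^3$, the function $\Lambda$ is continuous, and it attains a strictly positive minimum $\Lambda_*(\mathfrak r)$ on $D_{\mathfrak r}$.

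The only delicate point is the strictness $(h_ih_i'')(b)<0$ at critical points $b\in M_i$. This is what excludes zeros of $\Lambda$ on the lines $\{x_1\in M_1\}$ or $\{x_2\in M_2\}$ away from the centers, and it is supplied by (H2) together with Proposition~\ref{fine17}.
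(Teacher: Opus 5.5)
Your proof is correct and follows essentially the same route as the paper. You get non-negativity by applying (H3) termwise to \eqref{fine44}, identify $\{\Lambda=0\}=M_1\times M_2$ through the strict inequality $h_ih_i''<0$ on $M_i$, and conclude from continuity of $\Lambda$ and compactness of $D_{\mathfrak r}$; your case split on whether $(h_1')^2(x_1)$ vanishes is just a repackaging of the paper's restriction to the lines $M_1\times\T$ and $\T\times M_2$.
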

\begin{proof}
Thanks to  (H3), we have $h_1h_1''\leq 0$ and $h_2h_2''\leq 0$ everywhere on $\T$, so that
\begin{equation}\label{fine66}\Lambda(x)\;\geq\;2(h_1')^2(x_1)(h_2')^2(x_2)\;\geq\;0\ \ \ \ \ \ \ \ x\in\T^2.
\end{equation}
In particular,\begin{equation}\label{fine67}
\{\Lambda=0\}\;\subset\;\{(h_1')^2(h_2')^2=0\}\;=\;(M_1\times\T)\cup (\T\times M_2).
\end{equation}

Within the level set $\{\Lambda=0\}$, 
we write $\Lambda(x)=-(h_1')^2(x_1) (h_2 h_2'')(x_2)-(h_1h_1'')(x_1)(h_2')^2(x_2).$ On $M_1\times\T$, we have 
$h_1'(x_1)=0$, so that $\Lambda(x)=-(h_1h_1'')(x_1)(h_2')^2(x_2)$. According to Proposition ~\ref{fine17} we have  $h_1h_1''<0$ on $M_1$, so that $\Lambda$ vanishes iff $h_2'=0$, i.e.\ at
$M_1\times M_2$. Symmetrically, we have
$\{\Lambda=0\}\cap(\T\times M_2)=M_1\times M_2$, and we conclude
 \[\{\Lambda=0\}=M_1\times M_2.\] 
 Finally, as all centers lie in
$\bigcup_p B(p,\mathfrak{r})$, we have $\Lambda>0$ on $D_{\mathfrak{r}}$, and the minimum 
attained on the compact set $D_{\mathfrak{r}}$ is strictly positive.
\end{proof}

Now, consider the stochastic flow
\begin{equation}\label{fine65}
d\phi_\al(t,x)=(\nu\,\xi)(\phi_\al(t,x))\circ dw(t)-g_\al(\phi_\al(t,x))\,dt,\quad
\phi_\al(0,x)=x \in\,\T^2,
\end{equation}
where $g_\al=\beta_\alpha\nu^2\,D\xi\xi$ is given by \eqref{fine11}.
We have
\[
dH(\phi_\al(t,x))=\nabla H(\phi_\al(t,x))\cdot(\nu\xi)(\phi_\al(t,x))\circ dw(t)
-\nabla H(\phi_\al(t,x))\cdot g_\al(\phi_\al(t,x))\,dt,
\]
and since $\nabla H(x)\cdot\xi(x)=\nabla H(x)\cdot\nabla^\perp H(x)=0$, the noise term
vanishes. Moreover, by \eqref{fine34},
\[
\nabla H(x)\cdot g_\al(x)=\beta_\alpha(x)\nu^2(x)\,\nabla H(x)\cdot (D\xi\xi)(x)=\beta_\alpha(x)\nu^2(x)\,H(x)\Lambda(x),
\]
so that 
\begin{equation}\label{fine12}dH(\phi_\al(t,x))=-\beta_\alpha(\phi_\al(t,x))\,\nu^2(\phi_\al(t,x))\,\Lambda(\phi_\al(t,x))\,H(\phi_\al(t,x))\,dt,
\end{equation}
and
\begin{equation}\label{fine13}H(\phi_\al(t,x))=H(x)\,\exp\!\bigg(\!-\!\int_0^t\beta_\alpha(\phi_\al(s,x))\nu^2(\phi_\al(s,x))\Lambda(\phi_\al(s,x))\,ds\bigg).
\end{equation}
Due to condition (A), we have
\[\beta_\alpha(x)\geq\beta_{\alpha, *}:=\frac{\al}{2(l^*+\al)},\ \ \ \ \ l^*:=\max_{x \in\,[-1,1]}l(x),\]
and
\[\nu^2(x)\geq m_{*}^2:=\min_{x \in\,[-1,1]}n^2(x)>0.\]
In view of Lemma \ref{fine36}, this means that  $|H(\phi_\al(t,x))|$ is non-increasing in $t$. Moreover,  \eqref{fine12} is a deterministic equation with random coefficients.
  
  \medskip

As a consequence of Lemma~\ref{fine36} and \eqref{fine13}, for every $x\in D_{\mathfrak{r}}$
and $t\geq 0$, \[
|H(\phi_\al(t,x))|\;\leq\;\gamma_{\mathfrak{r}}\,\exp\!\big(-\beta_{\alpha, *}\,m_{*}^2\,\Lambda_{*}(\mathfrak{r})\,t\big).
\]
In particular,  if for every 
$\eta\in(0,\gamma_{\mathfrak{r}})$ we define
\begin{equation}\label{fine68}
T_{\eta,\al}\;:=\;\frac{1}{\beta_{\alpha, *}\,m_{*}^2\,\Lambda_{*}(\mathfrak{r})}\,
\log\!\frac{\gamma_{\mathfrak{r}}}{\eta}\;=\;
\frac{2(l^{*}+\al)}{\al\,m_{*}^2\,\Lambda_{*}(\mathfrak{r})}\,\log\!\frac{\gamma_{\mathfrak{r}}}{\eta},
\end{equation}
we have
\begin{equation}\label{fine69}
\phi_\al(t,x)\in\,E_\eta:=\{|H|\leq\eta\},\ \ \ \ t\geq T_{\eta,\al},\ \ \ \ \ x\in D_{\mathfrak{r}}.
\end{equation}

\smallskip

For each midpoint $q\in\mathcal M$ , we fix a radius $r_q>0$ as in
Lemma~\ref{fine48}, and possibly shrink it so that the balls
$B(q,r_q)$, with $q\in\mathcal M$, are all pairwise disjoint, each contained
in a single cell-closure, and disjoint from the union of all $B(p,\mathfrak{r})$, with $p$ center.
For $\vt\in(0,\min_qr_q)$, we  define
\begin{equation}\label{fine71}
F_{\eta,\vt}\;:=\;\bigcup_{q\in\mathcal M}\big(E_\eta\cap B(q,\vt)\big),\qquad
G_{\eta,\vt}\;:=\;E_\eta\setminus F_{\eta,\vt}.
\end{equation}

\begin{Lemma}\label{fine72}There exist positive constants $\vt_1$ and $\eta_1$ and positive constants $c_1,c_2$, and $c_*0$, independent of $\eta_1$ and $\vt_1$, such that for every $\al>0$, 
and every $\vt\in(0,\vt_1)$ and $\eta\in(0,\eta_1\wedge c_*\vt^2)$,
\begin{equation}\label{sfm14}
x\in F_{\eta,\vt}\;\Longrightarrow\;|\div g_\al(x)|\leq
c_1\,\frac{\al\,n^2(0)}{l(0)+\al}\,\vt,
\end{equation}
and
\begin{equation}\label{sfm15}
x\in G_{\eta,\vt}\;\Longrightarrow\;\div g_\al(x)\geq
c_2\,\frac{\al\,n^2(0)}{l(0)+\al}\,\vt^2.
\end{equation}
\end{Lemma}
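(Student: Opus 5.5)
The argument starts from the decomposition of Lemma~\ref{fine51}:
\[
\div g_\al(x)=\frac{\al}{l(H(x))+\al}\,n^2(H(x))\,D_H(x)-H(x)\Lambda(x)\Theta_\al(H(x)).
\]
On $E_\eta=\{|H|\le\eta\}$ the second term is bounded by $\eta\,\|\Lambda\|_\infty\sup_{[-1,1]}|\Theta_\al|$. From \eqref{fine57}, and using $l\ge l_0$ together with $l,r\in C^2$,
\[
|\Theta_\al(h)|\le C\,\frac{\al}{l(0)+\al},
\]
with $C$ independent of $\al$. This holds because $(l(h)+\al)^{-1}\le \max(1,l(0)/l_0)\,(l(0)+\al)^{-1}$, and likewise for the square, since $\al/(l+\al)^2\le \al/(l_0(l+\al))$. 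In the same way, continuity of $l,n$ at $0$ gives, for $|h|\le\eta_1$ small, comparability with constants independent of $\al$:
\[
\frac{\al n^2(h)}{l(h)+\al}\ \text{ is within a factor } 2 \text{ of } \frac{\al n^2(0)}{l(0)+\al}.
\]
Since $n^2\ge m_*^2>0$, both error contributions are therefore bounded by $C'\eta\,\frac{\al n^2(0)}{l(0)+\al}$. Every estimate reduces to the normalized quantity $\frac{\al n^2(0)}{l(0)+\al}$ times a bound on $D_H$ plus $C'\eta$.

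\emph{Bad set $F_{\eta,\vt}$.} Take $x\in E_\eta\cap B(q,\vt)$ with $q=(q_1,p_2)\in Z_1\times M_2$. The upper bound in \eqref{fine63} gives
\[
|D_H(x)|\le c_2(q)\big(\vt^2+\vt\big)\le 2c_2(q)\vt .
\]
A lower bound is not needed in absolute value terms, because the lower bound in \eqref{fine63} is $\ge -c_2(q)\vt$. Combined with $\eta\le c_*\vt^2\le\vt$, this gives \eqref{sfm14} with $c_1=2\max_q c_2(q)+C'c_*$.

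\emph{Good set $G_{\eta,\vt}$.} This is the main step. The claim to prove is $D_H\ge c\,\vt^2$ on $E_\eta\setminus\bigcup_q B(q,\vt)$ when $\eta\le c_*\vt^2$. I would split into a fixed neighbourhood of the midpoints and its complement.

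(i) Away from the midpoints, i.e.\ outside $\bigcup_q B(q,r_q/2)$: by Lemma~\ref{fine47}, $D_H$ is positive on the compact set $\mathcal S\setminus\bigcup_q B(q,r_q/2)$, say $D_H\ge 2d_0$ there. By uniform continuity and compactness of $\{|H|\le\eta_1\}$, which shrinks to $\mathcal S$ as $\eta_1\to 0$, we get $D_H\ge d_0$ on $E_{\eta_1}$ outside those balls. This dominates $\vt^2$ once $\vt_1$ is small.

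(ii) Near a midpoint, for $\vt\le|x-q|<r_q/2$ with $|H(x)|\le\eta$: since $|h_1'(q_1)|>0$ and $|h_2(p_2)|>0$, we have $|H(x)|\ge c|x_1-q_1|$ near $q$, so $|x_1-q_1|\le\eta/c\le (c_*/c)\vt^2$. Then $|x_2-p_2|\ge \vt/2$ for $\vt$ small, and the lower bound in \eqref{fine63} gives
\[
D_H(x)\ge c_1(q)\vt^2/4-c_2(q)(c_*/c)\vt^2\ge c_1(q)\vt^2/8,
\]
provided $c_*$ is chosen small relative to $\min_q c_1(q)c/c_2(q)$. The $M_1\times Z_2$ midpoints are handled symmetrically.

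Finally, subtracting the error $C'\eta\le C'c_*\vt^2$, with $c_*$ shrunk further if necessary, yields \eqref{sfm15}. The fact that $c_*$ must be fixed before $\vt,\eta$ and independently of them is the delicate bookkeeping point. It works because all the constants above depend only on $h_1,h_2,l,r$ and not on $\al$, $\eta_1$ or $\vt_1$.
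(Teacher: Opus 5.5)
Your proof is correct. Its skeleton matches the paper's: the splitting $\div g_\al=T_1-T_2$, the $\al$-uniform comparability of $\frac{\al n^2(h)}{l(h)+\al}$ with $\frac{\al n^2(0)}{l(0)+\al}$ for small $|h|$, the upper bound in \eqref{fine63} on $F_{\eta,\vt}$, and absorbing $T_2=O(\eta)$ by taking $c_*$ small.

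The genuine difference is in the lower bound on $G_{\eta,\vt}$. The paper projects $x$ to a nearest point $x^*\in\mathcal S$ and asserts $|x-x^*|\le\eta/\zeta$, using a lower bound $|\nabla H|\ge\zeta$ on $D_{\mathfrak r}$. It then bounds $D_H(x^*)$ in one of two ways: near a midpoint it uses \eqref{fine63} only on $\mathcal S$, where the negative term drops out; away from the midpoints it uses $\min D_H$ on $\mathcal S$. Finally it returns to $x$ by Lipschitz continuity of $D_H$. You instead apply the full two-sided bound \eqref{fine63} at $x$ itself, and control the transverse offset by $|H(x)|\ge c|x_1-q_1|$ near the midpoint. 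You handle the complement of the fixed balls $B(q,r_q/2)$ by compactness, using that $E_\eta$ shrinks to $\mathcal S$ and $D_H>0$ on $\mathcal S\setminus\mathcal M$.

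In both proofs, $\eta\lesssim\vt^2$ plays the same role: an $O(\eta)$ transverse displacement costs $O(\eta)$ in $D_H$, and $\vt^2$ must dominate it. What each route buys:
\begin{itemize}
\item The paper's version funnels everything through the values of $D_H$ on $\mathcal S$ and a single Lipschitz step, which gives a closed formula for $c_*$.
\item Yours needs a quantitative transverse bound only near the midpoints, where $\nabla H\neq 0$.
\end{itemize}
The second point is a real advantage. $\nabla H$ also vanishes at the corners $Z_1\times Z_2$, which lie in $D_{\mathfrak r}$, so the paper's $\zeta>0$ does not exist as stated. Near a corner, $E_\eta$ reaches distance of order $\sqrt\eta$ from $\mathcal S$. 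The paper's step can be repaired: $D_H>0$ at the corners, so such a displacement is harmless for small $\eta$. Your compactness argument simply never meets the problem.

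Only cosmetic bookkeeping remains in your write-up. For example, the comparability factor $2$ should also multiply the $D_H$ contribution, so your $c_1$ should read $4\max_q c_2(q)+C'c_*$.
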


\begin{proof}
As in \eqref{fine52}, write $\div g_\al=T_1-T_2$, with
\[T_1(x):=\frac{\al}{l(H(x))+\al}\,n^2(H(x))\,D_H(x),\ \ \ \ \ \ \ T_2(x):=H(x)\,\Lambda(x)\,\Theta_\alpha(H(x)).\]

For every $h \in\,(0,\eta)$ and $\al>0$ we have\[
\bigg|\frac{\al/(l(h)+\al)}{\al/(l(0)+\al)}-1\bigg|\;=\;\frac{|l(0)-l(h)|}{l(h)+\al}\;\leq\;\frac{|l'|_\infty\eta}{l_0+\al}\;\leq\;\frac{|l'|_\infty\eta}{l_0}.
\]
Hence, if 
$\eta\leq \eta_{1,1}:=l_0/(2|l'|_\infty)$, we have
\begin{equation}\label{sfm6}
\frac{\al/(l(h)+\al)}{\al/(l(0)+\al)} \in\,[1/2,\,3/2].	
\end{equation}
Moreover, we have\[
\left|n^2(h)/n^2(0)-1\right|=\frac{|n^2(h)-n^2(0)|}{n^2(0)}=\frac{|n(h)-n(0)|\cdot|n(h)+n(0)|}{n^2(0)}\leq \frac{2|n|_\infty|n'|_\infty}{n^2(0)}\,\eta,
\]
and if we take 
 $\eta\leq \eta_{1,2}:=n^2(0)/(4 |n|_\infty|n'|_\infty)$, we get
 \begin{equation}
 n^2(h)/n^2(0) \in\,	[1/2,\,3/2].
 \end{equation}
Therefore, if we set
\begin{equation}\label{fine53}
\eta_1:=\;\eta_{1, 1}\wedge \eta_{1,2},
\end{equation}
we obtain\begin{equation}\label{fine54}
\frac14\,\frac{\al\,n^2(0)}{l(0)+\al}\;\leq\;\frac{\al}{l(h)+\al}\,n^2(h)\;\leq\;\frac94\,\frac{\al\,n^2(0)}{l(0)+\al},\ \ \ \ \ \ \ \ \eta\leq \eta_1.
\end{equation}
Finally, by Lemma \ref{fine48}, we have \[x\in F_{\eta,\vt}\subset
\bigcup_q B(q,\vt)\Longrightarrow |D_H(x)|\leq c_2(q)\,\vt,\ \ \ \ \ \ \vt<1,\]
and then, in view of \eqref{fine54}, we conclude that 
\begin{equation} \label{sfm2}
x\in F_{\eta,\vt}\Longrightarrow |T_1(x)|\leq \frac94\,\frac{\al\,n^2(0)}{l(0)+\al} \ \sup_{q \in\,\mathcal{M}} c_2(q)\,\vt=:\frac94\,\frac{\al\,n^2(0)}{l(0)+\al} C_{\mathcal{M}}\,\vt.
\end{equation}

Since $|\nabla H|$  vanishes only at the centers $M_1\times M_2$ and $D_{\mathfrak{r}}$ is compact, there exists some $\zeta_{\bar{r}}>0$ such that $|\nabla H(x)|\geq \zeta_{\bar{r}}$, for all $x \in\,D_{\mathfrak{r}}$. This implies that $E_\eta\cap D_{\mathfrak{r}}$ is contained in a tubular neighborhood of
$\mathcal S$ of width $\eta/\zeta_{\bar{r}}$.
If for every $x\in G_{\eta,\vt}\subset E_\eta\cap D_{\mathfrak{r}}$, we denote by  $x^*$  the
closest point of $\mathcal S$ to $x$, provided
\begin{equation}\label{fine55}
\eta\leq \zeta_{\bar{r}}\vt/2,
\end{equation}
the triangle inequality gives \[\dist(x^*,\mathcal M)\geq\dist(x,\mathcal M)-|x-x^*|\geq\vt-\vt/2=\vt/2.\]

Now, assume  $x^*\in\mathcal S\cap B(q,r_q)$, for some $q\in\mathcal M$.
We can take  $q=(q_1,p_2)\in Z_1\times M_2$, as the case $M_1\times Z_2$ can be treated analogously.
Near $q$, the separatrix is locally the vertical line $\{x_1=q_1\}$, so
$x^*=(q_1,p_2+\tau)$, for some $\tau\in\R$ with $|\tau|\leq r_q$. The
condition $\dist(x^*,\mathcal M)\geq\vt/2$ together with the disjointness
of all balls $B(q',r_{q'})$, with $q'\in\mathcal M$, forces $|\tau|\geq\vt/2$.
By Lemma~\ref{fine48}, with $x_1-q_1=0$ and $|x_2-p_2|=|\tau|$, we get
\begin{equation}\label{sfm5}
D_H(x^*)\;\geq\;c_q\,\tau^2\;\geq\;\frac{c_q}{4}\,\vt^2\geq \frac 14\,\min_{q\in\mathcal M}c_q\, \vt^2=:c_{\mathcal{M}}\,\vt^2.
\end{equation}
Next,  assume  $x^*\in\mathcal S\setminus\bigcup_{q\in\mathcal M}B(q,r_q)$.
Thanks again to Lemma  ~\ref{fine48}, $D_H>0$ on this set, which is
compact. Hence
\begin{equation}\label{sfm4}
D_H(x^*)\;\geq\;D_{m}\;:=\;\min_{y\in\mathcal S\setminus\bigcup_qB(q,r_q)}D_H(y)\;>\;0.
\end{equation}
Therefore, if we take 
\begin{equation}\label{fine73}
\vt_1\;\leq\;(D_{m}/c_{\mathcal M})^{1/2},
\end{equation}
combining together \eqref{sfm5} and \eqref{sfm4}, we obtain
\[
x\in G_{\eta,\vt}\Longrightarrow D_H(x^*)\;\geq\;c_{\mathcal M}\,\vt^2,\ \ \ \ \ \ \vt\leq \vt_1.\]

We now transfer the bound from $x^*$ to $x$ by continuity. Since $D_H\in C^1(\T^2)$, there exists $L>0$ (depending only on $h_1,h_2$) such that
$|D_H(x)-D_H(x^*)|\leq L\,|x-x^*|\leq L\eta/\zeta_{\bar{r}}$. Imposing
\begin{equation}\label{fine74}
L\eta/\zeta_{\bar{r}}\;\leq\;\,c_{\mathcal{M}}\,\vt^2/2,
\end{equation}
we conclude 
\[ D_H(x)\geq D_H(x^*)-L\eta/\zeta_{\bar{r}}\geq c_{\mathcal{M}}\,\vt^2-c_{\mathcal{M}}\,\vt^2/2=c_{\mathcal{M}}\,\vt^2/2,\] 
for all $x\in G_{\eta,\vt}$. This implies that 
\[x\in G_{\eta,\vt}\Longrightarrow 
T_1(x)\;\geq\;\frac{1}{4}\frac{\al\,n(0)^2}{l(0)+\al}c_{\mathcal{M}}\,\vt^2.
\]

Next, we bound $T_2(x)=H(x)\,\Lambda(x)\,\Theta_\alpha(H(x))$, with $\Theta_\alpha\in C^1([-1,1])$
given by \eqref{fine57}. Recalling that
\[
\Theta_\alpha(h)=\frac{\al\,l'(h)\,n^2(h)}{2(l(h)+\al)^2}-\frac{\al\,n(h)\,n'(h)}{l(h)+\al},
\]
we have\[
|\Theta_\alpha(h)|\;\leq\;\frac{\al}{l(h)+\al}\,\bigg(\frac{|l'|_\infty |n|_\infty^2}{2l_0}+|n|_\infty |n'|_\infty\bigg)\;=:\;\frac{\al}{l(h)+\al}\,c_{l, n}.
\]
Therefore, according to \eqref{sfm6},  for every  $x \in\,E_\eta$ and $\eta\leq \eta_1$
\begin{equation}\label{sfm7}
|T_2(x)|\;\leq\;\eta\,|\Lambda|_\infty\,\frac{\al}{l(H(x))+\al}\,c_{l, n}\leq \frac32\,\eta\,|\Lambda|_\infty\,\frac{\al}{l(0)+\al}c_{\lambda, \nu}=:\frac{\al\,n^2(0)}{l(0)+\al}\,c_{l, n}^\prime\,\eta.
\end{equation}

We define
\begin{equation}\label{fine58}
c_*\;:=\;\frac{\zeta_{\bar{r}}}{2}\wedge \frac{c_{\mathcal{M}} \zeta_{\bar{r}}}{2L}\wedge \frac{c_{\mathcal{M}}}{8c_{l, n}'},
\end{equation}
where the first two terms come from \eqref{fine55} and \eqref{fine74} and the third one from \eqref{sfm7}. With this
choice and $\eta\leq c_*\vt^2$,
\[
|T_2(x)|\;\leq\;c_{l, n}'\,\frac{\al\,n^2(0)}{l(0)+\al}\,\eta\;\leq\;c_{l, n}'\,c_*\,\frac{\al\,n^2(0)}{l(0)+\al}\,\vt^2\;\leq\;\frac{c_{\mathcal{M}}}{8}\,\frac{\al\,n^2(0)}{l(0)+\al}\,\vt^2.
\]
In particular,
\[
x\in F_{\eta,\vt}\;\Rightarrow\;|\div g_\al(x)|\leq |T_1(x)|+|T_2(x)|
\leq \frac 18\Big(18\,C_{\mathcal M}+{c_{\mathcal{M}}}\vt\Big)\frac{\al\,n^2(0)}{l(0)+\al}\vt
\leq c_1\,\frac{\al\,n^2(0)}{l(0)+\al}\,\vt,
\]
with $c_1:=(18\,C_{\mathcal M}+{c_{\mathcal{M}}}\vt_1)/8$, and
\[
x\in G_{\eta,\vt}\;\Rightarrow\;\div g_\al(x)\geq T_1(x)-|T_2(x)|\geq
\Big(\frac{c_{\mathcal{M}}}{4}-\frac{c_{\mathcal{M}}}{8}\Big)\frac{\al\,n^2(0)}{l(0)+\al}\vt^2
=\frac{c_{\mathcal{M}}}{8}\,\frac{\al\,n^2(0)}{l(0)+\al}\,\vt^2.
\]
Setting $c_2:=c_{\mathcal{M}}/8$, we  complete the proof.
\end{proof}

\section{Characteristic-flow representation}\label{fine56}

We introduce the stochastic flow $\phi_\alpha(t,x)$ defined by 
\begin{equation}
	d\phi_\alpha(t,x)= \nu(\phi_\alpha(t,x))\xi(\phi_\alpha(t,x))\circ dw(t)-g_{\alpha}(\phi_\alpha(t,x))dt,\ \ \ \ \phi_\alpha(0,x)=x.
\end{equation}
Next, we set 
\begin{equation}
	h_\alpha(t,x):=f_\alpha(t,\phi_\alpha(t,x)).
\end{equation}
By It\^{o}-Kunita-Wentzell formula (see \cite{fgp} for a proof in this framework), it holds
\begin{equation}
\begin{array}{l}
		\ds{  dh_\alpha(t,x) = (df_\alpha)(t,\phi_\alpha(t,x))+\nabla f_\alpha(t,x)\circ d\phi_\alpha(t,x) }\\[10pt]
		\ds{\quad \quad \quad \quad = \text{div}\,g_\alpha(\phi_\alpha(t,x))\,h_\alpha(t,x)\,dt-R_{0}h_\alpha^{2}(t,x)dt, \ \ \ \ \ h_\alpha(0,x)=f_{0}(x). }	\end{array}
\end{equation}
 Next, if we denote by $\psi_\alpha(t,x)$ the matrix $D\phi_\alpha(t,x)$, we have
\begin{equation}
	d\psi_\alpha(t,x) = D(\nu\xi)(\phi_\alpha(t,x) \psi_\alpha(t,x)\circ dw(t)-Dg_\alpha(\phi_\alpha(t,x))\psi_\alpha(t,x)dt,
\end{equation}
and, since $\div(\nu \xi)=0$, we have
\begin{equation}
	\begin{array}{ll}
	&\ds{ d\big(\text{det}\ \psi_\alpha\big)(t,x)=\text{det}\ \psi_\alpha(t,x)\big[ \text{div}(\nu \xi)(\phi_\alpha(t,x))\circ dw(t)-\text{div}\,g_\alpha(\phi_\alpha(t,x))\,dt \big] }\\
	\vs 
	&\ds{\quad\quad\quad\quad\quad\quad \quad\quad\quad  =-\text{div}g_\alpha(\phi_\alpha(t,x))(\text{det}\ \psi_\alpha)(t,x)\,dt },
	\end{array}
\end{equation}
with $\text{det}\ \psi_\alpha(0,x)=1$. This implies that
\begin{equation}
	\text{det}\, \psi_\alpha(t,x) = \exp\Big(-\int_{0}^{t}\text{div}\,g_\alpha(\phi_\alpha(s,x))\,ds\Big),
\end{equation}
so that
\begin{equation}
\begin{array}{l}
\ds{\frac{d}{dt}\left(h_\alpha(t,x)\exp\left(-\int_{0}^{t}\text{div}\,g_\alpha(\phi_\alpha(s,x))\,ds\right)\right)}\\[14pt]
\ds{\quad \quad  = -R_{0}\,\exp\left(\int_{0}^{t}\text{div}\,g_\alpha(\phi_\alpha(s,x))ds\right)\left(h_\alpha(t,x)\exp\left(-\int_{0}^{t}\text{div}\,g_\alpha(\phi_\alpha(s,x))\,ds\right)\right)^2.}	
\end{array}
	\end{equation}
This gives 
\begin{equation}
\begin{array}{l}
\ds{h_\alpha(t,x)\lvert \text{det}\ \psi_\alpha(t,x)\rvert= h_\alpha(t,x)\exp\left(-\int_{0}^{t}\text{div}\,g_\alpha(\phi_\alpha(s,x))ds\right)}\\[14pt]
\ds{ \quad \quad  \quad  \quad  \quad  \quad  \quad  = f_{0}(x)\left(1+R_0 f_{0}(x)\int_{0}^{t}\exp\left(\int_{0}^{s}\text{div}\,g_\alpha(\phi_\alpha(r,x))dr\right)  ds\right)^{-1},}	
\end{array}
	\end{equation}
so that
\begin{equation}\label{fin2}
\begin{array}{l}
\ds{\int_{\mathbb{T}^{2}}f_\alpha(t,x)dx = \int_{\mathbb{T}^{2}}h_\alpha(t,x)\lvert \text{det}\ \psi_\alpha(t,x)\rvert dx}\\[14pt]
\ds{ = \int_{\mathbb{T}^{2}}f_{0}(x)\left(1+R_0 f_{0}(x)\int_{0}^{t}\exp\left(\int_{0}^{s}\text{div}\,g_\alpha(\phi_\alpha(r,x))dr\right)  ds\right)^{-1}\,  dx.}	
\end{array}
	\end{equation}
	If we assume that $\text{ supp}\,f_0\subset D_{\mathfrak{r}}$ (see \eqref{fine14}), identity  \eqref{fin2} implies 	
\[\int_{\mathbb{T}^{2}}f_\alpha(t,x)dx\leq \frac 1{R_0}\int_{D_{\mathfrak{r}}}\left(\int_0^t\exp\Big(\int_0^s \text{div}\,g_\alpha(\phi_\alpha(r,x))\,dr\Big)\,ds\right)^{-1}\,dx.\]
Since $\text{div}\,g_\alpha\leq 1$, we have
\[\begin{array}{l}
\ds{\int_0^t\exp\Big(\int_0^s \text{div}\,g_\alpha(\phi_\alpha(r,x))\,dr\Big)\,ds}\\[14pt]
\ds{=\exp\Big(\int_0^t \text{div}\,g_\alpha(\phi_\alpha(r,x))\,dr\Big)\int_0^t\exp\Big(-\int_s^t \text{div}\,g_\alpha(\phi_\alpha(r,x))\,dr\Big)\,ds}\\[14pt]
\ds{\geq \exp\Big(\int_0^t \text{div}\,g_\alpha(\phi_\alpha(r,x))\,dr\Big)\int_0^te^{-(t-s)}\,ds= \exp\Big(\int_0^t \text{div}\,g_\alpha(\phi_\alpha(r,x))\,dr\Big)(1-e^{-t}). }	
\end{array}\]
Thus, if we take   $t\geq \log 2$ we obtain\begin{equation}
\label{fin20}
\int_{\mathbb{T}^2}f_\alpha(t,x)\,dx\leq \frac 2{R_0}\int_{D_{\mathfrak{r}}}\exp\Big(-\int_0^t \text{div}\,g_\alpha(\phi_\alpha(r,x))\,dr\Big)\,dx.	
\end{equation}


 To illustrate the joint behavior of $H$ and $\mathrm{div}\, g_{\alpha}$, Figure \ref{fig:joint} displays the level curves of $H(x) = \sin x_1 \cos x_2$ and $\mathrm{div}\, g_\alpha(x)$ simultaneously. The function $\mathrm{div}\, g_\alpha$ is strictly negative at the center  $(\pi/2, 0)$, where $|H|$ attains its maximum, and transitions to positive values as $x$ approaches the separatrix $\{H = 0\}$.  The red curve marks the zero set $\{\mathrm{div}\, g_\alpha = 0\}$, which separates the region where the inertial drift compresses the flow from the region where it expands it. The gray shaded corners indicate the region $\{\mathrm{div}\, g_\alpha \geq 0.3\}$, lying near the separatrix, where the drift most strongly concentrates mass toward $\{H = 0\}$.

\begin{figure}[htbp]
	\centering
	
	\begin{minipage}{0.9\textwidth}
		\centering
		\includegraphics[width=\linewidth]{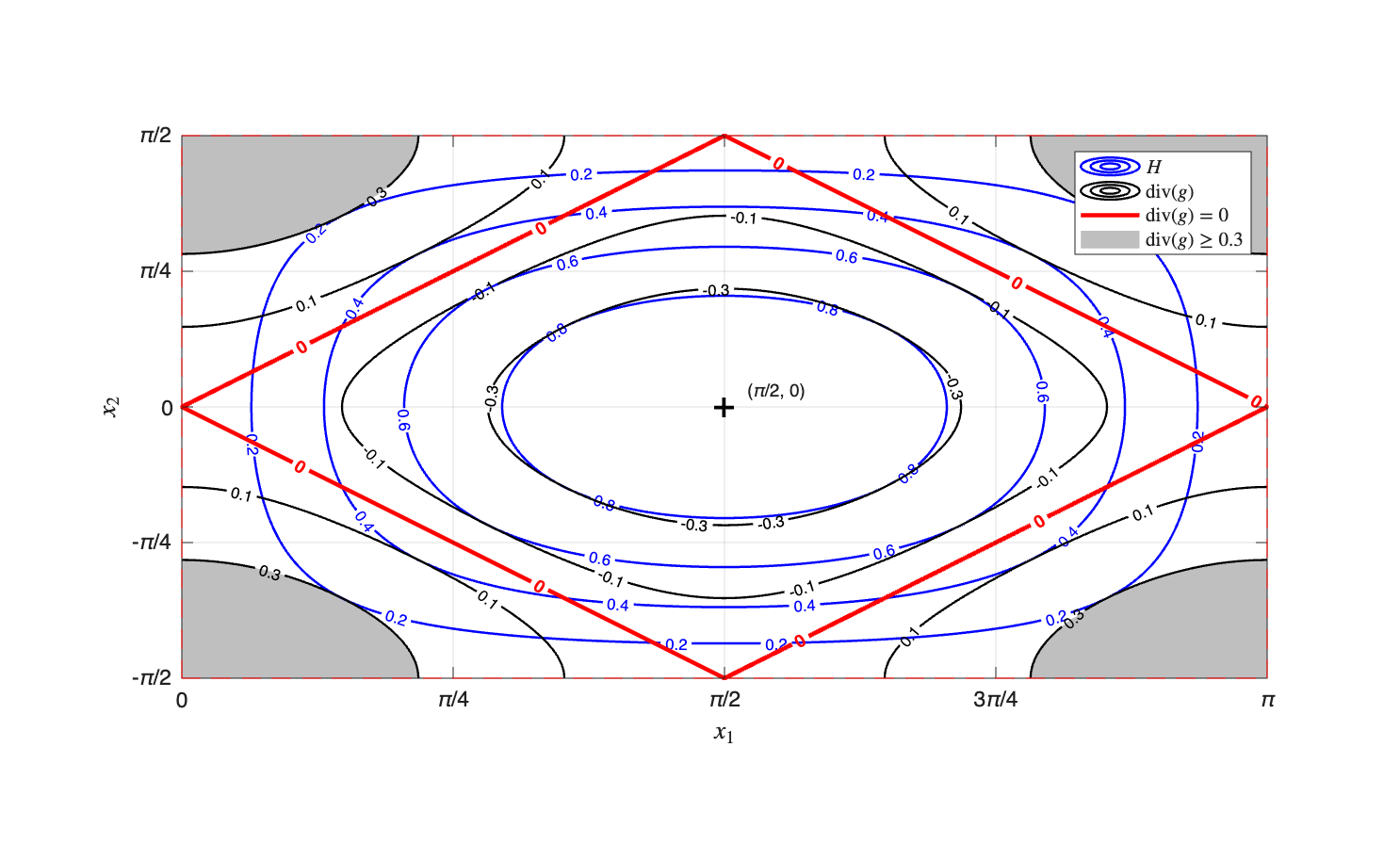}
		\captionsetup{justification=centering}
		\caption{Joint level curves of $H(x) = \sin x_1 \cos x_2$ (blue) and 
			$\mathrm{div}\, g_\alpha(x)$ (black) on the region $[0,\pi]\times[-\pi/2,\pi/2]$}
		\label{fig:joint}
	\end{minipage}
	
\end{figure}


Combining \eqref{fin20} with \eqref{fine69}, due to \eqref{sfm12} for every
$t\geq T_{\eta,\al}$ we obtain
\[\begin{array}{l}
\ds{\int_{\T^2}f_\alpha(t,x)\,dx\leq \frac{c}{R_0}\exp\left(\frac{c\, \alpha}{ l_0+\alpha}\, T_{\eta, \alpha}\right)\int_{D_{\mathfrak{r}}}\exp\Big(-\int_{T_{\eta, \alpha}}^t \text{div}\,g_\alpha(\phi_\alpha(r,x))\,dr\Big)\,dx}	\\[16pt]
\ds{\quad \quad \quad \quad \quad \quad \quad \quad \quad \quad \leq \frac{c}{R_0}\, \eta^{-\kappa}\int_{D_{\mathfrak{r}}}\exp\Big(-\int_{T_{\eta, \alpha}}^t \text{div}\,g_\alpha(\phi_\alpha(r,x))\,dr\Big)\,dx,}
\end{array}
\]
for some constant $\kappa>0$ independent of $\alpha>0$.
Thus, if we set $k(t):=\lfloor t-T_{\eta, \alpha}\rfloor$, and $s_j:=T_{\eta, \alpha}+j$, for $j=0,\ldots,k-1$, and define
\[J_{\alpha}(s,x):=\exp\Big(-\int_{s}^{s+1} \text{div}\,g_\alpha(\phi_\alpha(r,x))\,dr\Big),\ \ \ \ \ \ x \in\,D_{\mathfrak{r}},\]
we have
\begin{equation}\label{new17}
\begin{array}{l}\ds{\int_{\T^2}f_\alpha(t,x)\,dx \leq \frac{c}{R_0}\, \eta^{-\kappa}\,\int_{D_{\mathfrak{r}}}\prod_{j=0}^{k(t)-1} J_{\alpha}(s_j,x)\,dx.}
\end{array}	
\end{equation}

Now, assume that the following result holds.
\begin{Proposition}\label{prop7.1}
There exist $\eta_0,\vt_0>0$,  independent of   $R_0$ and $f_0$,  and a constant $\bar\kappa>0$ such that, for
every $\al>0$, every $x\in D_{\mathfrak{r}}$, and every $s\geq T_{\eta_0,\al}$,
\begin{equation}\label{fine15}
\E\big(J_\al(s,x)\,|\,\mathcal F_s\big)\leq\exp\!\bigg(\!-\frac{\al\,n^2(0)}{l(0)+\al}\,\bar\kappa\bigg),\ \ \ \ \ \ \ \ \ \ \P\text{-a.s.}
\end{equation}
\end{Proposition}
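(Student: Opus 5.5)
The plan is to split the unit interval $[s,s+1]$ according to whether $\phi_\al(r,x)$ lies in the good set $G_{\eta,\vt}$ or the bad set $F_{\eta,\vt}$ of Lemma~\ref{fine72}, and to use an occupation-time bound for the bad set.

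First, fix $\vt\in(0,\vt_1)$, to be chosen later independently of $\al$, and set $\eta_0:=\eta_1\wedge (c_*\vt^2/2)$. By \eqref{fine69}, for $s\geq T_{\eta_0,\al}$ and $x\in D_{\mathfrak r}$ we have $\phi_\al(r,x)\in E_{\eta_0}$ for all $r\ge s$. Moreover, $|H(\phi_\al(r,x))|$ is nonincreasing and bounded below by a positive quantity unless $H(x)=0$, so the trajectory never reaches the centers. Hence $\phi_\al(r,x)\in F_{\eta_0,\vt}\cup G_{\eta_0,\vt}$.

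Write $\Pi_s:=\int_s^{s+1}\ind_{F_{\eta_0,\vt}}(\phi_\al(r,x))\,dr$ and $a_\al:=\al n^2(0)/(l(0)+\al)$. Lemma~\ref{fine72} then gives, pathwise,
\[
\int_s^{s+1}\div g_\al(\phi_\al(r,x))\,dr\;\geq\; c_2a_\al\vt^2(1-\Pi_s)-c_1a_\al\vt\,\Pi_s .
\]

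The key step is the conditional occupation bound
\[
\E(\Pi_s\,|\,\mathcal F_s)\le \kappa_0\vt \quad\text{a.s.}
\]
To prove it, note that $F_{\eta_0,\vt}\subset\bigcup_q B(q,\vt)$, and each ball lies in a single cell closure. For $q=(q_1,p_2)\in Z_1\times M_2$, the ball $B(q,\vt)$ is contained in the strip $\{|x_2-p_2|\le\vt\}$. So it suffices to control the time the coordinate $\phi_{\al,2}$ spends in $[p_2-\vt,p_2+\vt]$. The case $M_1\times Z_2$ is symmetric, using $\phi_{\al,1}$.

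I would write the It\^o form of $\phi_{\al,2}$ as $d\phi_{\al,2}=\nu\xi_2\,dw+b\,dr$ with bounded drift $b$. By Tanaka's formula,
\[
L^a_{s+1}-L^a_s=2\Big[(\phi_{\al,2}(s+1)-a)^+-(\phi_{\al,2}(s)-a)^+-\int_s^{s+1}\ind_{\{\phi_{\al,2}>a\}}\,d\phi_{\al,2}\Big].
\]
Taking conditional expectations removes the martingale part. The remaining terms are bounded by the oscillation of $\phi_{\al,2}$ over the unit interval plus $|b|_\infty$, which gives $\E(L^a_{s+1}-L^a_s|\mathcal F_s)\le C$ uniformly in $a$ and $\al$. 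The uniformity in $\al$ holds because $g_\al$ is bounded uniformly in $\al$.

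The occupation-time formula then yields
\[
\int_s^{s+1}\ind_{[p_2-\vt,p_2+\vt]}(\phi_{\al,2})\,\nu^2\xi_2^2(\phi_\al)\,dr=\int_{p_2-\vt}^{p_2+\vt}(L^a_{s+1}-L^a_s)\,da .
\]
I expect the main obstacle to be removing the weight $\nu^2\xi_2^2$ on the left. Near $q$ we have $\xi_2=-h_1'(x_1)h_2(x_2)$ with $h_1'(q_1)\neq0$ and $h_2(p_2)\neq0$. Hence $\xi_2^2$ is bounded below on $B(q,r_q)$; this is exactly the non-degeneracy near midpoints announced for Section~\ref{fine26}. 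Therefore, restricting to the ball, I get
\[
\E\Big(\int_s^{s+1}\ind_{B(q,\vt)}(\phi_\al)\,dr\,\Big|\,\mathcal F_s\Big)\le C'\vt .
\]
Summing over the finitely many $q$ gives the bound with some $\kappa_0$.

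To conclude, set $X:=\int_s^{s+1}\div g_\al(\phi_\al)\,dr$. Since $|X|\le c\,a_\al$ by \eqref{sfm12} (note that $n^2(0)\ge r_0/|l|^2_\infty$), convexity gives $e^{-X}\le 1-X+\tfrac12 X^2e^{|X|}$. Conditioning on $\mathcal F_s$ and using the lower bound on $X$ together with the occupation bound,
\[
\E(-X|\mathcal F_s)\le -a_\al\big(c_2\vt^2-(c_1+c_2\vt)\kappa_0\vt^2\big).
\]
This is the weak point: it requires $\kappa_0(c_1+c_2\vt)<c_2$. If that fails, I would refine the Tanaka bound so that the time spent in $\{|x_2-p_2|\le\vt\}\cap E_\eta$ is $o(\vt)$ relative to $c_2\vt^2/c_1$. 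Alternatively, I would use that on $F$ the factor $D_H$ is actually $O(\vt^2+\eta)$ rather than $O(\vt)$: in \eqref{fine63} with $|x_1-q_1|\lesssim\eta\lesssim\vt^2$, the upper bound becomes $c\vt^2$. That version makes the bad contribution $O(a_\al\vt^3)$, and any small $\vt$ then works.

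With such a $\vt$ fixed, we get $\E(e^{-X}|\mathcal F_s)\le 1-\tilde c\, a_\al+C a_\al^2$. This bound may not be small for large $a_\al$. To handle that, apply the argument on subintervals of length $\tau$ small, chosen uniformly since $a_\al\le |n|^2_\infty$, and use the tower property over these subintervals. This yields $\E(J_\al|\mathcal F_s)\le 1-\tfrac12\tilde c\, a_\al\le \exp(-\bar\kappa a_\al)$.
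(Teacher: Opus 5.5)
Your architecture is the paper's: the pathwise dichotomy of Lemma~\ref{fine72}, the Tanaka/occupation-time bound $\E(\Pi_s\,|\,\mathcal F_s)\le\kappa_0\vt$, then exponentiation. Your way of removing the weight, by restricting to $B(q,\vt)\subset B(q,r_q)$ where $\sigma_{\al,2}^2$ is bounded below, is slightly simpler than Lemma~\ref{fine37}. The ``weak point'' you flag is real and not an artifact of your write-up. With \eqref{sfm14} the expected bad contribution $c_1\kappa_0a_\al\vt^2$ has the same order in $\vt$ as the gain $c_2a_\al\vt^2$. The paper's own concluding step passes over this with the implication $\vt\le c_2/(16c_1\kappa_0)\Rightarrow 4c_1\kappa_0\vt^2\le c_2\vt^2/4$, which is false: it needs $16c_1\kappa_0\le c_2$, a relation between fixed constants. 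Your first remedy cannot work, because for a path started near a midpoint the occupation time of the strip really is of order $\vt$. Your second remedy is the right repair and should be the main line. For $x\in E_\eta\cap B(q,\vt)$ with $q=(q_1,p_2)\in Z_1\times M_2$ one has $|h_2(x_2)|\ge|h_2(p_2)|/2$, so $|h_1(x_1)|\le 2\eta/|h_2(p_2)|$. By Lemma~\ref{fine32} this gives $|x_1-q_1|\lesssim\eta\le c_*\vt^2$. Then \eqref{fine63} and \eqref{sfm7} give $|\div g_\al|\le C a_\al\vt^2$ on $F_{\eta,\vt}$, with $C$ independent of $\al$, and the bad contribution drops to $O(a_\al\vt^3)$.

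The genuine gap is your final step. Expanding $e^{-X}\le 1-X+\frac12X^2e^{|X|}$ with only $|X|\le c\,a_\al$ leaves an error $Ca_\al^2$ against a gain of order $a_\al\vt^2$. This closes only when $a_\al\lesssim\vt^2$, i.e.\ for small $\al$, whereas $\bar\kappa$ must be uniform in $\al$ and $a_\al\to n^2(0)$ as $\al\to\infty$. Subdividing into intervals of length $\tau$ does not help. On such an interval Tanaka's formula only bounds local-time increments by $O(\sqrt\tau)$, so the occupation bound is $O(\vt\sqrt\tau)$. For $\tau\lesssim\vt^2$, a path started at a midpoint spends a fraction of order one of the subinterval in $F_{\eta,\vt}$, where $\div g_\al$ has no favorable sign. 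So there is no uniform per-step bound of the form $1-c\,a_\al\vt^2\tau$, which is exactly what the tower property needs. A net gain requires $\tau\gg\vt^2$, while the $a_\al^2\tau^2$ error requires $\tau\ll\vt^2/a_\al$; these are incompatible once $a_\al$ is of order one.

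The missing idea is to exponentiate the pathwise lower bound rather than expand around $X$. Write $e^{-X}\le e^{-c_2a_\al\vt^2}e^{\gamma\Pi_s}$ with $\gamma:=(c_2+C)a_\al\vt^2\le(c_2+C)|n|_\infty^2\vt^2$, and use $e^{\gamma z}\le 1+(e^\gamma-1)z$ for $z\in[0,1]$, as in \eqref{fine61}. The randomness then enters only linearly through $\E(\Pi_s\,|\,\mathcal F_s)\le\kappa_0\vt$, and for $\vt$ small, uniformly in $\al$,
\[
\E\big(J_\al(s,x)\,|\,\mathcal F_s\big)\le\big(1-\tfrac12c_2a_\al\vt^2\big)\big(1+2(c_2+C)\kappa_0a_\al\vt^3\big)\le\exp\big(-\tfrac14c_2a_\al\vt^2\big).
\]
This is \eqref{fine15} with $\bar\kappa=c_2\vt^2/4$, and no subdivision is needed.
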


As a consequence of the above result, if we set
\begin{equation}
	\label{sfm20}
	\chi_\alpha:=\frac{\al\,n^2(0)}{l(0)+\al},
\end{equation}
 we have
\[\begin{array}{l}
\ds{\mathbb{E}\,\prod_{j=0}^{k(t)-1}J_{\alpha}(s_j,x)=\mathbb{E}\Big(\prod_{j=0}^{k(t)-2}J_{\alpha}(s_j,x)\,\mathbb{E}\left(J_{\alpha}(s_{k(t)},x)|\mathcal{F}_{s_{k(t)}}\right)\Big)\leq e^{-\chi_\alpha\bar{\kappa}}\,\mathbb{E}\,\prod_{j=0}^{k(t)-2}J_{ \alpha}(s_j,x),}
\end{array}\]
and a recursive argument allows us to conclude that
\[\mathbb{E}\,\prod_{j=0}^{k(t)-1}J_{\alpha}(s_j,x)\leq e^{- \chi_\alpha\bar{\kappa}\,k(t)}\leq e^{(T_{{\eta_0}, \alpha}+1)\chi_\alpha\bar{\kappa}}\,e^{-\chi_\alpha\bar{\kappa}\, t},\ \ \ \ \ x \in\,D_{\mathfrak{r}}.\]
Thanks to \eqref{new17}, this implies 
\[\mathbb{E}\int_{\T^2}f_\alpha(t,x)\,dx \leq \frac{c}{R_0}\, {\eta}^{-\kappa}\,\exp\left(\chi_\alpha\,\bar\kappa\right)e^{(T_{\bar{\eta}, \alpha}+1)\chi_\alpha\bar\kappa}\,e^{-\chi_\alpha\bar\kappa\, t},\]
and \eqref{fin1} follows. Thus, in order to complete the proof of Theorem \ref{thm:main}, we will only need to prove Proposition \ref{prop7.1}.

\section{The flow on a cell: It\^o form and non-degeneracy near midpoints}\label{fine26}

As we have already seen, the flow $\phi_\al$ leaves each cell $\mathcal C$ of
$\T^2\setminus\mathcal S$ invariant. Namely, for every cell $\mathcal C$ of $\T^2\setminus\mathcal S$ and every
$x\in\mathcal C$, it holds $\phi_\al(t,x)\in\overline{\mathcal C}$ for all $t\geq 0$,
$\P\text{-a.s.}$
Indeed, if $H(x)>0$, then by
\eqref{fine13} $H(\phi_\al(t,x))>0$, for all $t\geq 0$, so that $\phi_\al(t,x)$
remains in the connected component of $\{H>0\}$ containing $x$ — which is
the cell $\mathcal C$ containing $x$. The same holds for $H(x)<0$. 

In what follows we fix a single cell $\mathcal C$ and  $x\in D_{\mathfrak{r}}\cap\mathcal C$.
The constants we get will depend on the cell, but since there are finitely many
cells, we eventually take the worst constant. Lifting $\overline{\mathcal C}$ to a closed
rectangle $[a_1,b_1]\times[a_2,b_2]\subset\R^2$ with $b_i-a_i<2\pi$ and
$h_i(a_i)=h_i(b_i)=0$, we work with $\phi_\al$ as an $[a_1,b_1]\times[a_2,b_2]$-valued continuous semimartingale.

\subsection{It\^o form of the flow }

\smallskip

Write $\phi_\al=(\phi_{\al,1},\phi_{\al,2})$. The Stratonovich-It\^o
conversion gives, for any $C^2$ vector field $V$,
\[
V(\phi_\al(t,x))\circ dw(t)\;=\;V(\phi_\al(t,x))\,dw(t)\,+\,\tfrac12\,DV(\phi_\al(t,x))\,V(\phi_\al(t,x))\,dt.
\]
For $V=\nu\xi$, we computed in the proof of Lemma~\ref{fine10}
that $D(\nu\xi)(\nu\xi)=\nu(\xi\cdot\nabla\nu)\xi+\nu^2\,D\xi\,\xi$, and
under (A) the first term vanishes. Hence $DV\,V=\nu^2\,D\xi\,\xi$.
Combined with \eqref{fine65} and $g_\al=\beta_\alpha\,\nu^2\,D\xi\,\xi$, this yields the It\^o form
\begin{equation}\label{fine28}
d\phi_\al(t,x)\;=\;\nu(\phi_\al(t,x))\,\xi(\phi_\al(t,x))\,dw(t)\,+\,\Big(\tfrac12-\beta_\alpha(\phi_\al(t,x))\Big)\nu(\phi_\al(t,x))^2\,D\xi(\phi_\al(t,x))\,\xi(\phi_\al(t,x))\,dt.
\end{equation}
Componentwise,
\[
\sigma_{\al,1}(x):=\nu(x)\,\xi_1(x)=-\nu(x)\,h_1(x_1)\,h_2'(x_2),\quad
\sigma_{\al,2}(x):=\nu(x)\,\xi_2(x)=\nu(x)\,h_1'(x_1)\,h_2(x_2),
\]
and
\[
b_{\al,i}(x)\;:=\;\Big(\tfrac12-\beta_\alpha(x)\Big)\,\nu(x)^2\,(D\xi(x)\,\xi(x))_i,\qquad i=1,2.
\]

Since $\al>0$ and $l\geq l_0>0$, we have 
\[\beta_\alpha(x)=\al/(2(l(H(x))+\al))\in(0,1/2),\]
so that  $\tfrac12-\beta_\alpha(x)\in(0,1/2)$. Together with the bound $|\nu|\leq\vert n\vert_\infty<\infty$, this gives
\[
|b_{\al,i}(x)|\;\leq\;\tfrac12\,\vert n\vert_\infty^2\,\vert D\xi\,\xi\vert_\infty,
\]
and the right-hand side is a constant depending only on the $C^2$-norms of $h_1,h_2$ and on $m$ (independent of $\al$). Set
\begin{equation}\label{fine29}c_b\;:=\;\tfrac12\,\vert n\vert_\infty^2\,\vert D\xi\,\xi\vert _\infty,\qquad
c_\sigma\;:=\;\vert n\vert _\infty\,\max\big(\vert h_1\vert _\infty\vert h_2'\vert _\infty,\vert h_1'\vert _\infty\vert h_2\vert _\infty\big).
\end{equation}
Both constants are independent of $\al$ and depend only on the $C^2$-norms of $h_1,h_2$ and on $m$ and 
for all $\alpha>0$ 
\begin{equation}\label{sfm100}
|\sigma_{\alpha, i}(x)|\leq c_\sigma,\ \ \ \ \ \ |b_{\alpha, i}(x)|\leq c_b,\ \ \ \ \ i=1,2.	
\end{equation}

\subsection{Non-degeneracy estimates near midpoints}

The localization argument relies on the following structural fact.

\begin{Lemma}\label{fine32}
There exist constants $r_*>0$ and $c_1^*>0$, depending only on $h_1$,
such that
\[\dist(x_1,Z_1)\leq r_*\Longrightarrow
|h_1'(x_1)|\;\geq\;c_1^*.
\]
Moreover, there exists $m_*>0$, depending only on $h_1$ and $r_*$, such
that
\[\dist(x_1,Z_1)\geq r_*\Longrightarrow 
|h_1(x_1)|\;\geq\;m_*.
\]
The symmetric statements hold for $h_2$, with constants $c_2^*,m_*'>0$.
\end{Lemma}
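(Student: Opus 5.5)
The plan is a compactness and continuity argument for the one-variable function $h_1\in C^3(\T)$, using only (H1) and the finiteness of $Z_1$ established in Section~\ref{sec3}.

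\emph{First claim.} For each $a\in Z_1$, (H1) gives $h_1'(a)\neq 0$. Since $h_1'$ is continuous, there is $\varrho_a>0$ such that $|h_1'(y)|\geq |h_1'(a)|/2$ for $|y-a|\leq\varrho_a$. Because $Z_1$ is finite, we set
\[
r_*:=\min_{a\in Z_1}\varrho_a,\qquad c_1^*:=\tfrac12\min_{a\in Z_1}|h_1'(a)|.
\]
We may also shrink $r_*$ below half the minimal distance between distinct zeros, so that the neighborhoods are disjoint. This is not needed for the claim, but it is convenient later.

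If $\dist(x_1,Z_1)\leq r_*$, then $|x_1-a|\leq r_*\leq\varrho_a$ for some $a\in Z_1$, because the distance to a finite set is attained. Hence $|h_1'(x_1)|\geq |h_1'(a)|/2\geq c_1^*$.

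Alternatively, $c_1^*$ can be made explicit via the mean value theorem: $|h_1'(y)-h_1'(a)|\leq |h_1''|_\infty|y-a|$, so one may take $\varrho_a=|h_1'(a)|/(2|h_1''|_\infty)$.

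\emph{Second claim.} Consider the set
\[
K:=\{x_1\in\T:\dist(x_1,Z_1)\geq r_*\}.
\]
It is closed in the compact $\T$, hence compact. If $K$ is empty the statement is vacuous; otherwise $h_1$ is continuous and does not vanish on $K$, since $K\cap Z_1=\emptyset$. Therefore $m_*:=\min_K|h_1|>0$, and $m_*$ depends only on $h_1$ and $r_*$.

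If one wants an explicit value, Proposition~\ref{fine17} gives one. On each interval $[a_j,a_{j+1}]$ the function $|h_1|$ is concave and vanishes at the endpoints, so on $[a_j+r_*,a_{j+1}-r_*]$ its minimum is attained at an endpoint. This yields $m_*=\min_{a\in Z_1}\min(|h_1(a\pm r_*)|)$.

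The symmetric statements for $h_2$ follow verbatim, with constants $c_2^*$ and $m_*'$. If a common $r_*$ is desired for both functions, take the minimum of the two radii; the first claim survives shrinking $r_*$, and the second then holds with the correspondingly recomputed $m_*$ and $m_*'$.

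There is no real obstacle here. The only point needing care is the order of choices: $r_*$ must be fixed first, from the first claim, and $m_*$ is determined afterwards, which matches the dependence stated in the lemma.
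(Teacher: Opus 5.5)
Your argument is correct and is essentially the paper's proof: continuity of $h_1'$ near the finitely many simple zeros, with $c_1^*=\tfrac12\min_{a\in Z_1}|h_1'(a)|$, gives the first claim, and compactness of $\{\dist(x_1,Z_1)\geq r_*\}$, on which $h_1$ does not vanish, gives $m_*>0$. Your explicit choices are valid but not needed: $\varrho_a$ via the mean value theorem, and $m_*$ via concavity of $|h_1|$, which uses (H3) through Proposition~\ref{fine17} together with your requirement that $2r_*$ be smaller than the minimal gap between zeros.
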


\begin{proof}
For the first claim, thanks to (H1) we have $|h_1'(q)|>0$ for every $q\in Z_1$. By
continuity of $h_1'$, there exists $r_*>0$ such that
$|h_1'(x_1)|\geq|h_1'(q)|/2$ on $[q-r_*,q+r_*]$ for every $q\in Z_1$.
Shrinking $r_*$ if necessary so that the intervals
$\{[q-r_*,q+r_*]\}_{q\in Z_1}$ are pairwise disjoint (possible since
$Z_1$ is finite), we define \[c_1^*:=\min_{q\in Z_1}|h_1'(q)|/2>0.\]

For the second claim, we have that $\{x_1\in\T:\dist(x_1,Z_1)\geq r_*\}$ is a compact
set on which $h_1$ does not vanish and by continuity
$|h_1|$ attains a positive minimum $m_*$ on it.
\end{proof}

We conclude with the following localization result.

\begin{Lemma}
   \label{fine37}
There exist $\eta_0\in(0,\eta_1]$ and, for each midpoint
$q\in\mathcal M$, an open neighborhood $V_q\subset\T$ of the tangential
coordinate of $q$ and a constant $c_*(q)>0$, all depending only on
$h_1,h_2$, such that the following holds.
For every $\eta\in(0,\eta_0]$, every $\al>0$, every $r\geq 0$, every
cell $\mathcal C$ with $q\in\overline{\mathcal C}$, and every
$x\in D_{\mathfrak{r}}\cap\mathcal C$
\begin{itemize}
\item[1.] If $q=(q_1,p_2)\in Z_1\times M_2$ and
$\phi_\al(r,x)\in E_\eta$, with $\phi_{\al,2}(r,x)\in V_q$, then
\[
|\sigma_{\al,2}(\phi_\al(r,x))|^2\;\geq\;c_*(q).
\]
\item[2.] If $q=(p_1,q_2)\in M_1\times Z_2$ and
$\phi_\al(r,x)\in E_\eta$ with $\phi_{\al,1}(r,x)\in V_q$, then
\[
|\sigma_{\al,1}(\phi_\al(r,x))|^2\;\geq\;c_*(q).
\]
\end{itemize}
\end{Lemma}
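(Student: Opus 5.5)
The plan is to observe that the statement is purely pointwise: the flow enters only through the point $y:=\phi_\al(r,x)$, which lies in $\overline{\mathcal C}\cap E_\eta$. It therefore suffices to prove the following deterministic claim. There exist $\eta_0$, neighborhoods $V_q$ and constants $c_*(q)>0$ such that $|\nu(y)h_1'(y_1)h_2(y_2)|^2\geq c_*(q)$ for every $y\in E_{\eta_0}\cap\overline{\mathcal C}$ with $y_2\in V_q$, where $q=(q_1,p_2)\in Z_1\times M_2$. The case $q\in M_1\times Z_2$ is symmetric, with the roles of $h_1,h_2$ exchanged. Since $\nu^2=n^2(H)\geq m_*^2>0$ under (A), it is enough to bound $|h_1'(y_1)|\,|h_2(y_2)|$ from below.

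First, handle the $h_2$ factor. Since $p_2\in M_2$ and $M_2\cap Z_2=\emptyset$, we have $h_2(p_2)\neq 0$. Choose $V_q:=(p_2-\delta_q,p_2+\delta_q)$ with $\delta_q$ small, so that $|h_2|\geq |h_2(p_2)|/2$ on $V_q$. By Lemma~\ref{fine32} applied to $h_2$, it is also convenient to take $\delta_q$ small enough that $V_q$ stays at distance $\geq r_*'$ from $Z_2$, say. Then $|h_2(y_2)|\geq m_q:=|h_2(p_2)|/2$.

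Next, handle the $h_1$ factor; this uses $E_\eta$. On $E_\eta$ with $y_2\in V_q$ we have $|h_1(y_1)|=|H(y)|/|h_2(y_2)|\leq \eta/m_q$. I then want to conclude that $y_1$ is close to $Z_1$. By Lemma~\ref{fine32}, if $\dist(y_1,Z_1)\geq r_*$ then $|h_1(y_1)|\geq m_*$. Choosing $\eta_0\leq \eta_1\wedge \tfrac12\min_q m_q\, m_*$, with the minimum over the finitely many midpoints, forces $\dist(y_1,Z_1)<r_*$. The first part of Lemma~\ref{fine32} then gives $|h_1'(y_1)|\geq c_1^*$. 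Combining, I obtain $|\sigma_{\al,2}(y)|^2\geq m_*^2 (c_1^*)^2 m_q^2=:c_*(q)$. This constant is independent of $\al$, $r$, $x$ and the cell, and it depends only on $h_1,h_2$ and the lower bound on $n^2$. The statement says ``depending only on $h_1,h_2$''; I would note that the factor $m_*^2=\min n^2$ enters as well, or absorb it.

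The only mildly delicate point is the inference from $|h_1(y_1)|$ small to $y_1$ lying near some zero of $h_1$. I expect this to be the main (though easy) obstacle, and Lemma~\ref{fine32} resolves it through the dichotomy on $\dist(y_1,Z_1)$, with no need to identify which zero. Cell-invariance and the hypothesis $x\in D_{\mathfrak r}$ are not actually needed beyond guaranteeing that $\phi_\al(r,x)$ is a well-defined point of $\T^2$.
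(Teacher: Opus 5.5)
Your argument is correct and is essentially the paper's proof. You bound $|h_2|$ below by $|h_2(p_2)|/2$ on a neighborhood $V_q$ of $p_2$, use $|H|\le\eta$ to force $|h_1(y_1)|$ small, invoke the dichotomy of Lemma~\ref{fine32} to get $|h_1'(y_1)|\ge c_1^*$, and multiply by the lower bound on $|\nu|$. On two points you are slightly more careful than the paper: your factor $\tfrac12$ in $\eta_0$ makes the exclusion of $\dist(y_1,Z_1)\ge r_*$ strict, and your squared constant $c_*(q)=m_*^2(c_1^*)^2m_q^2$ is the right one, whereas the paper's $c_*(q)=\sqrt{m_*c_1^*c_2(q)}$ does not follow from its bound. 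The only fix needed is to rename one of your two different constants that are both called $m_*$.
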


\begin{proof}
We prove only (i), as (ii) is symmetric. Fix $q=(q_1,p_2)\in Z_1\times M_2$.
 Since
$p_2\in M_2$ and $M_2\cap Z_2=\emptyset$, $h_2(p_2)\neq 0$. Set
$c_2(q):=|h_2(p_2)|/2>0$, and let $V_q\subset\T$ be the open connected
component of $\{x_2\in\T:|h_2(x_2)|>c_2(q)\}$ containing $p_2$. Then
$|h_2|\geq c_2(q)$ on $V_q$.

Now, let
$r_*,c_1^*$ and $m_*$ be the constants of Lemma~\ref{fine32} for
$h_1$, and set
\begin{equation}\label{fine40}
\eta_0(q)\;:=\;c_2(q)\,m_*.
\end{equation}
Suppose $\eta\leq\eta_0(q)$, $\phi_\al(r,x)\in E_\eta$, and
$\phi_{\al,2}(r,x)\in V_q$. Then
$|h_2(\phi_{\al,2}(r,x))|\geq c_2(q)$, so that
\begin{equation}\label{fine41}|h_1(\phi_{\al,1}(r,x))|\;=\;\frac{|H(\phi_\al(r,x))|}{|h_2(\phi_{\al,2}(r,x))|}\;\leq\;\frac{\eta}{c_2(q)}\;\leq\;\frac{\eta_0(q)}{c_2(q)}\;=\;m_*.
\end{equation}
In view of  the second part of Lemma~\ref{fine32}, this implies 
$\dist(\phi_{\al,1}(r,x),Z_1)<r_*$, and then, by the first part of
Lemma~\ref{fine32}, we conclude $|h_1'(\phi_{\al,1}(r,x))|\geq c_1^*$.
Therefore, 
\[
|\sigma_{\al,2}(\phi_\al(r,x))|\;=\;\nu(\phi_\al(r,x))\,|h_1'(\phi_{\al,1}(r,x))|\,|h_2(\phi_{\al,2}(r,x))|\;\geq\;m_{*}\,c_1^*\,c_2(q).
\]
Finally, if we set
\[c_*(q):=\sqrt{m_{*}\,c_1^*\,c_2(q)},\ \ \ \ \ \ \eta_0:=\eta_1\wedge\min_{q\in\mathcal M}\eta_0(q),\]
we conclude the proof of the lemma.
\end{proof}

   \section{Proof of Proposition \ref{prop7.1}}\label{sec8}
We start from the following averaging estimate for $\div g_\alpha$.
\begin{Lemma}\label{fine59}
Fix $\vt\in(0,\vt_1)$ and $\eta\in(0,\min(\eta_1,c_*\vt^2))$. Then, for every
$\al>0$, $x\in D_{\mathfrak{r}}$, and  $s\geq T_{\eta,\al}$, we have
\begin{equation}\label{fine60}
\int_s^{s+1}\div g_\al(\phi_\al(r,x))\,dr\;\geq\;
\chi_\alpha\bigg(\,c_2\,\vt^2-(c_2\vt^2+c_1\vt)\int_s^{s+1}\ind_{F_{\eta,\vt}}(\phi_\al(r,x))\,dr\bigg),\quad\P\text{-a.s.}
\end{equation}
where $\chi_\alpha$ is defined as in \eqref{sfm20}
\end{Lemma}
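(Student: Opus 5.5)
The plan is to obtain the estimate pointwise in $\omega$ from the dichotomy of Lemma~\ref{fine72}, once we know that the trajectory stays in $E_\eta$ during the whole interval $[s,s+1]$. Fix $\omega$ outside a null set, $x\in D_{\mathfrak{r}}$ and $s\geq T_{\eta,\al}$.

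First, by \eqref{fine69}, for every $r\geq s\geq T_{\eta,\al}$ we have $\phi_\al(r,x)\in E_\eta$. Since $E_\eta=F_{\eta,\vt}\cup G_{\eta,\vt}$ is a disjoint union by \eqref{fine71}, we get
\[
\ind_{G_{\eta,\vt}}(\phi_\al(r,x))=1-\ind_{F_{\eta,\vt}}(\phi_\al(r,x)),\qquad r\in[s,s+1].
\]
One small point needs checking: Lemma~\ref{fine72} uses $G_{\eta,\vt}\subset D_{\mathfrak{r}}$. This holds because $\eta<\eta_1$ can be taken below $\min_{\bigcup_p B(p,\mathfrak r)}|H|$. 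If needed, we shrink $\eta_1$ accordingly, which is harmless since $|H|$ is bounded away from $0$ near the centers. Alternatively, we can note that $|H(\phi_\al)|$ is non-increasing and $|H|\le\eta$ forces the point away from the centers.

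Next, we split the integrand according to whether $\phi_\al(r,x)$ lies in $G_{\eta,\vt}$ or in $F_{\eta,\vt}$. On $G_{\eta,\vt}$, \eqref{sfm15} gives $\div g_\al\geq c_2\chi_\al\vt^2$. On $F_{\eta,\vt}$, \eqref{sfm14} gives $\div g_\al\geq -c_1\chi_\al\vt$. Denoting $\tau:=\int_s^{s+1}\ind_{F_{\eta,\vt}}(\phi_\al(r,x))\,dr$ and integrating over $[s,s+1]$, we obtain
\[
\int_s^{s+1}\div g_\al(\phi_\al(r,x))\,dr\geq \chi_\al\big(c_2\vt^2(1-\tau)-c_1\vt\,\tau\big)=\chi_\al\big(c_2\vt^2-(c_2\vt^2+c_1\vt)\tau\big),
\]
which is exactly \eqref{fine60}.

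The only step requiring care is the first one. We must verify that the invariance $\phi_\al(r,x)\in E_\eta$ holds for all $r\ge s$ simultaneously, a.s., and this follows from the deterministic bound \eqref{fine13} holding pathwise. We must also make sure that the hypotheses of Lemma~\ref{fine72} are met on $G_{\eta,\vt}$, i.e.\ that points of $E_\eta$ reached by the flow lie in $D_{\mathfrak r}$. The rest is bookkeeping.
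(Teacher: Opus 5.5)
Your proof is correct and follows the paper's argument: you use \eqref{fine69} to keep $\phi_\al(r,x)$ in $E_\eta$, which is the disjoint union of $F_{\eta,\vt}$ and $G_{\eta,\vt}$, for all $r\geq s$; you apply the two bounds of Lemma~\ref{fine72} on each piece; and you substitute $\ind_{G_{\eta,\vt}}=1-\ind_{F_{\eta,\vt}}$. Your extra check that $G_{\eta,\vt}\subset D_{\mathfrak r}$ is not strictly needed here, since Lemma~\ref{fine72} is stated for all points of $G_{\eta,\vt}$, but it is a sound remark about an inclusion that the paper asserts without justification inside the proof of that lemma.
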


\begin{proof}
Since $s\geq T_{\eta,\al}$, we have that $\phi_\al(r,x)\in E_\eta$, for all $r\geq s$.
Splitting the integral on $E_\eta$ in two parts, one on $F_{\eta,\vt}$ and one on $G_{\eta,\vt}$, we write
\[\begin{array}{l}\ds{
\int_s^{s+1}\div g_\al(\phi_\al(r,x))\,dr=\int_s^{s+1}\div g_\al(\phi_\al(r,x))\,\ind_{F_{\eta,\vt}}(\phi_\al(r,x))\,dr}\\[10pt]
\ds{\quad \quad \quad \quad \quad +\int_s^{s+1}\div g_\al(\phi_\al(r,x))\,\ind_{G_{\eta,\vt}}(\phi_\al(r,x))\,dr=:I_{\alpha, 1}(s)+I_{\alpha, 2}(s).}\end{array}
\]
According to \eqref{sfm14} and \eqref{sfm15}, respectively,  if we define $\chi_\alpha$ as in \eqref{sfm20},   we have
\[I_{\alpha, 1}(s)\geq -c_1\chi_\alpha\,\vt\int_s^{s+1}\ind_{F_{\eta,\vt}}(\phi_\al)\,dr,\]
and 
\[I_{\alpha, 2}(s)\geq c_2\chi_\a\,\vt^2\int_s^{s+1}\ind_{G_{\eta,\vt}}(\phi_\al)\,dr.\]
Then, by using $\ind_{G_{\eta,\vt}}=1-\ind_{F_{\eta,\vt}}$ on $E_\eta$, we obtain 
\eqref{fine60}.
\end{proof}

Now, raising to the exponential \eqref{fine60}, and using the inequality 
\[e^{\gamma z}\leq 1+(e^\gamma-1)z,\ \ \ \ z\in[0,1],\ \ \ \gamma>0,\]
we get
\begin{equation}\label{fine61}
J_\al(s,x)\leq\exp\!\Big(\!-c_2\chi_\a\,\vt^2\Big)
\bigg(1+\Big(\!\exp\big((c_2\vt^2+c_1\vt)\chi_\alpha\big)-1\Big)\!\int_s^{s+1}\ind_{F_{\eta,\vt}}(\phi_\al(r,x))\,dr\bigg).
\end{equation}
Now, for $\vt\leq c_1/c_2$, we have  $c_2\vt^2\leq c_1\vt$, so the inner exponent in \eqref{fine61}
is at most $2c_1\chi_\alpha\,\vt$.
Thus, by taking the conditional expectation given $\mathcal F_s$, we have
\begin{equation}\label{fine62}
\begin{array}{l}
\ds{\E\big(J_\al(s,x)|\mathcal F_s\big)}\\[10pt]
\ds{\quad \quad \leq\exp\left(-c_2\chi_\alpha\,\vt^2\right)
\left(1+\left(\exp\left(2c_1\chi_\al\,\vt\right)-1\right)\E\Big(\int_s^{s+1}\ind_{F_{\eta,\vt}}(\phi_\al(r,x))\,dr\,\big|\mathcal F_s\Big)\right).}
\end{array}
\end{equation}

The remaining task consists of bounding the conditional expectation
\[\E\left(\int_s^{s+1}\ind_{F_{\eta,\vt}}(\phi_\al(r,x))\,dr\,|\,\mathcal F_s\right).\]

\medskip

We start with the following lemma that provides a fundamental occupation-time estimate near a midpoint.

\begin{Lemma}\label{fine75}
There exists $\eta_2\in(0,\eta_0)$ and $\vt_2\in(0,\vt_1)$, and for each
midpoint $q\in\mathcal M$ a constant $\kappa_0(q)>0$, independent of  $\al,\eta,\vt,s$ and $x$, such that the
following holds.

For every $\al>0$, $\vt\in(0,\vt_2)$, $\eta\in(0,\min(\eta_2,c_*\vt^2))$,
$s\geq T_{\eta,\al}$, $x\in D_{\mathfrak{r}}$, and $q=(q_1,b_2)\in Z_1\times M_2$
\begin{equation}\label{fine77}
\E\Big(\int_s^{s+1}\ind_{[b_2-\vt,b_2+\vt]}(\phi_{\al,2}(r,x))\,dr\,\big|\,\mathcal F_s\Big)\;\leq\;\kappa_0(q)\,\vt,\ \ \ \ \ \ \ \ \ \P\text{-a.s.}
\end{equation}
The symmetric statement for $q=(p_1,q_2)\in M_1\times Z_2$, with
$\phi_{\al,1}$ in place of $\phi_{\al,2}$ and $[p_1-\vt,p_1+\vt]$ in
place of $[b_2-\vt,b_2+\vt]$, also holds.
\end{Lemma}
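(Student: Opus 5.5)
We follow the strategy announced in Step 4 of the introduction: Tanaka's formula for the local time of the one–dimensional semimartingale $Y(r):=\phi_{\al,2}(r,x)$, combined with the occupation–time formula and the non-degeneracy of Lemma~\ref{fine37}. Fix $q=(q_1,b_2)\in Z_1\times M_2$ and a cell $\mathcal C$ with $x\in\mathcal C\cap D_{\mathfrak r}$ and $q\in\overline{\mathcal C}$. If $q\notin\overline{\mathcal C}$, the lifted interval $[b_2-\vt,b_2+\vt]$ is not met by the second coordinate for small $\vt$, or it is met only in the part of $\overline{\mathcal C}$ covered by the other midpoint of the same type; we deal with each midpoint separately. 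Lifting as in Section~\ref{fine26}, $Y$ is a continuous semimartingale with values in $[a_2,b_2']$, an interval of length $<2\pi$. By \eqref{fine28} it has It\^o decomposition $dY=\sigma_{\al,2}(\phi_\al)\,dw+b_{\al,2}(\phi_\al)\,dr$, with the bounds \eqref{sfm100} uniform in $\al$.

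\emph{Step 1 (local time bound).} For each level $a$, Tanaka's formula gives
\[
(Y(s+1)-a)^+-(Y(s)-a)^+=\int_s^{s+1}\ind_{\{Y>a\}}\,dY+\tfrac12\big(L^a_{s+1}-L^a_s\big).
\]
We take conditional expectations given $\mathcal F_s$. The stochastic integral has zero conditional mean, since its integrand is bounded by $c_\sigma$. The left side is at most $|Y(s+1)-Y(s)|\le 2\pi$ by the Lipschitz property of $y\mapsto(y-a)^+$ and cell invariance. The drift term is at most $c_b$. Hence
\[
\E(L^a_{s+1}-L^a_s\,|\,\mathcal F_s)\le 2(2\pi+c_b)=:C_L
\]
for all $a$, uniformly in $\al,s,x$.

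\emph{Step 2 (occupation time with non-degeneracy).} Choose $\vt_2$ so small that $[b_2-\vt_2,b_2+\vt_2]\subset V_q$, the neighbourhood of Lemma~\ref{fine37}, and take $\eta_2\le\eta_0$. Since $s\ge T_{\eta,\al}$, \eqref{fine69} gives $\phi_\al(r,x)\in E_\eta$ for all $r\ge s$. So whenever $Y(r)\in[b_2-\vt,b_2+\vt]$, Lemma~\ref{fine37}(1) yields $\sigma_{\al,2}(\phi_\al(r,x))^2\ge c_*(q)$. Then
\[
\int_s^{s+1}\ind_{[b_2-\vt,b_2+\vt]}(Y(r))\,dr\le \frac1{c_*(q)}\int_s^{s+1}\ind_{[b_2-\vt,b_2+\vt]}(Y(r))\,d\langle Y\rangle_r=\frac1{c_*(q)}\int_{b_2-\vt}^{b_2+\vt}\big(L^a_{s+1}-L^a_s\big)\,da,
\]
by the occupation-time formula. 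Taking conditional expectations, using Fubini (the integrand is nonnegative) and Step 1, we get the bound $2C_L\vt/c_*(q)$. This gives \eqref{fine77} with $\kappa_0(q):=2C_L/c_*(q)$.

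\emph{Main obstacle.} The delicate point is justifying the conditional Tanaka/occupation arguments almost surely, uniformly over the levels $a$. One needs a jointly measurable, right-continuous in $a$ version of $L^a$, and the stochastic integral must be a true martingale increment conditionally on $\mathcal F_s$; boundedness of $\sigma_{\al,2}$ handles the latter. A second point is that the lifting keeps $Y$ in a fixed bounded interval: cell invariance holds only in $\overline{\mathcal C}$, but $x\in\mathcal C$ and \eqref{fine13} keep $\phi_\al$ in the open cell, so the lift is continuous. The symmetric case $q\in M_1\times Z_2$ is identical, with the roles of the coordinates exchanged and Lemma~\ref{fine37}(2) used instead.
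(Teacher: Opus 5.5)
Your proposal is correct and follows essentially the same route as the paper's proof. Both use Tanaka's formula to bound the conditional local-time increments uniformly in the level, then the occupation-time formula, then the lower bound on $\sigma_{\al,2}^2$ from Lemma~\ref{fine37}, which applies because the flow lies in $E_\eta$ after time $T_{\eta,\al}$. Your remark on cells whose closure misses $q$ is in fact more accurate than the paper: such a cell may still meet the band $\{x_2\in[b_2-\vt,b_2+\vt]\}$, but it then contains another midpoint $(q_1',b_2)$ with the same $V_q$ and constants, whereas the paper claims the indicator simply vanishes there.
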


\begin{proof}
We prove only \eqref{fine77}, as the other case is symmetric.
By cell invariance, $\phi_\al(r,x)\in\overline{\mathcal C}$, for
all $r\geq 0$, where $\mathcal C$ is the cell containing $x$. If
$q\notin\overline{\mathcal C}$, then $\ind_{[b_2-\vt,b_2+\vt]}(\phi_{\al,2}(r,x))$ vanishes
for $\vt$ small enough, and  the bound is
trivial. Hence we may assume $q\in\overline{\mathcal C}$, and lift $\phi_{\al,2}$ to a
continuous $[a_2,b_2]$-valued semimartingale.

\smallskip
\noindent\emph{Step 1.} For each
$a\in\R$, Tanaka's formula applied to the continuous semimartingale
$\phi_{\al,2}(\cdot,x)$ at level $a$ gives, for $t\geq s$,
\begin{equation}\label{{fine78}}
(\phi_{\al,2}(t,x)-a)^+=(\phi_{\al,2}(s,x)-a)^++\!\int_s^t\!\ind_{\{\phi_{\al,2}(r,x)>a\}}d\phi_{\al,2}(r,x)+\tfrac12(L^a_t-L^a_s),
\end{equation}
where $L^a$ is the local time of $\phi_{\al,2}(\cdot,x)$ at level $a$.
Since $|\sigma_{\al,2}|\leq c_\sigma$ ( see
Section~\ref{fine26}), the stochastic integral
\[\int_s^t\ind_{\{\phi_{\al,2}(r,x)>a\}}\sigma_{\al,2}(r,x)\,dw(r)\] is a true
martingale, and taking conditional expectation given $\mathcal F_s$ in
\eqref{{fine78}}, for $t=s+1$,
\begin{equation}\begin{array}{l}\label{fine78}
\ds{\frac12\,\E(L^a_{s+1}-L^a_s|\mathcal F_s)
=\E\big((\phi_{\al,2}(s+1,x)-a)^+
-(\phi_{\al,2}(s,x)-a)^+\,\big|\,\mathcal F_s\big)}\\[10pt]
\ds{\quad \quad \quad \quad \quad \quad \quad \quad \quad \quad -\E\Big(\int_s^{s+1}\ind_{\{\phi_{\al,2}(r,x)>a\}}b_{\al,2}(\phi_\al(r,x))\,dr\,\big|\,\mathcal F_s\Big).}
\end{array}\end{equation}
Since $r\mapsto(r-a)^+$ is $1$-Lipschitz and $\phi_{\al,2}(\cdot,x)\in[a_2,b_2]$, due to cell invariance, with
$b_2-a_2<2\pi$, we have
\[
\big|(\phi_{\al,2}(s+1,x)-a)^+-(\phi_{\al,2}(s,x)-a)^+\big|\leq
|\phi_{\al,2}(s+1,x)-\phi_{\al,2}(s,x)|\leq b_2-a_2<2\pi.
\]
Hence, since  $|b_{\al,2}|\leq c_b$ (see  Section \ref{fine26}), from \eqref{fine78}, we conclude
\begin{equation}\label{fine79}
\sup_{a\in\R}\,\E(L^a_{s+1}-L^a_s|\mathcal F_s)\leq 2(b_2-a_2)+2cc_b\;=:\;\kappa_{\text{loc}},\ \ \ \ \ \ \ \P\text{-a.s.}
\end{equation}

\smallskip
\noindent\emph{Step 2.} For the continuous
semimartingale $\phi_{\al,2}(\cdot,x)$, the following occupation-time formula holds
\begin{equation}\label{fine80}
\int_s^{s+1}\ind_{[b_2-\vt,b_2+\vt]}(\phi_{\al,2}(r,x))\,\sigma_{\al,2}^2(\phi_\al(r,x))\,dr=\int_{b_2-\vt}^{b_2+\vt}(L^a_{s+1}-L^a_s)\,da.
\end{equation}
Taking the conditional expectation given $\mathcal F_s$ and using
\eqref{fine79},
\begin{equation}\label{fine76}
\E\Big(\int_s^{s+1}\ind_{[b_2-\vt,b_2+\vt]}(\phi_{\al,2}(r,x))\,\sigma_{\al,2}(r,x)^2\,dr\,\big|\,\mathcal F_s\Big)\leq 2\kappa_{\text{loc}}\,\vt.
\end{equation}

\smallskip
\noindent\emph{Step 3.}
For $\vt\leq\vt_2$ small enough that $[b_2-\vt,b_2+\vt]\subset V_q$, where $V_q$ is  the
neighborhood from Lemma~\ref{fine37}, we have
that whenever $r\in[s,s+1]$ is such that
$\phi_{\al,2}(r,x)\in[b_2-\vt,b_2+\vt]$ and $\phi_\al(r,x)\in E_\eta$, with $\eta\leq\eta_0$,  then $|\sigma_{\al,2}(\phi_\al(r,x))|^2\geq c_*(q)$ (see Lemma~\ref{fine37}).

Now, since $s\geq T_{\eta,\al}$, then thanks \eqref{fine69} we have that $\phi_\al(r,x)\in E_\eta$, for all $r\geq s$. Hence
\[
\ind_{[b_2-\vt,b_2+\vt]}(\phi_{\al,2}(r,x))\;\leq\;\frac{1}{c_*(q)}\,
\ind_{[b_2-\vt,b_2+\vt]}(\phi_{\al,2}(r,x))\,\sigma_{\al,2}^2(\phi_\al(r,x)),\ \ \ \ \ \ r\in[s,s+1].
\]
Thus, integrating, taking the conditional expectation given $\mathcal F_s$, and
applying \eqref{fine76}, we conclude
\[
\E \Big(\int_s^{s+1}\ind_{[b_2-\vt,b_2+\vt]}(\phi_{\al,2}(r,x))\,dr\,\big|\,\mathcal F_s\Big)\leq\frac{2\kappa_{\text{loc}}\vt}{c_*(q)}\;=:\;\kappa_0(q)\,\vt,
\]
which is \eqref{fine77}.
\end{proof}

\begin{Remark}
{\em Note that $\kappa_{\text{loc}}$ and $c_*(q)$ depend only on $h_1,h_2,l,r$ and 
in particular, the constant $\kappa_0(q)$ is uniform in $\al>0$. The
$\al$-dependence of the bound \eqref{fine62} therefore comes
entirely from the prefactor $\chi_\al$.}
\end{Remark}

Finally, we can conclude the proof of Proposition \ref{prop7.1}, with the following bound on the time spent in $F_{\eta,\vt}$.

\begin{Lemma}\label{fine38}
There exists $\kappa_0>0$ depending only on $h_1,h_2,l,r$ such that,
for every $\al>0$, $\vt\in(0,\vt_2)$,
$\eta\in(0,\min(\eta_2,c_*\vt^2))$, $s\geq T_{\eta,\al}$, and $x\in D_{\mathfrak{r}}$,
\begin{equation}\label{fine39}\E \Big(\int_s^{s+1}\ind_{F_{\eta,\vt}}(\phi_\al(r,x))\,dr\,\big|\,\mathcal F_s\Big)\leq\;\kappa_0\,\vt,\ \ \ \ \ \ \ \P\text{-a.s.}
\end{equation}
\end{Lemma}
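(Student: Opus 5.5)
The plan is to reduce Lemma~\ref{fine38} to a finite sum of the one-dimensional occupation-time bounds of Lemma~\ref{fine75}. By definition \eqref{fine71}, $F_{\eta,\vt}=\bigcup_{q\in\mathcal M}(E_\eta\cap B(q,\vt))$. This gives the pointwise bound
\[
\ind_{F_{\eta,\vt}}(y)\leq\sum_{q\in\mathcal M}\ind_{B(q,\vt)}(y),\qquad y\in\T^2,
\]
where the sum is finite because $\mathcal M$ is finite. So it suffices to bound, for each fixed midpoint $q$, the conditional expected time that $\phi_\al(r,x)$ spends in $B(q,\vt)$ during $[s,s+1]$.

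Take first $q=(q_1,b_2)\in Z_1\times M_2$. If $y\in B(q,\vt)$, then $|y_2-b_2|<\vt$. Hence
\[
\ind_{B(q,\vt)}(\phi_\al(r,x))\leq\ind_{[b_2-\vt,b_2+\vt]}(\phi_{\al,2}(r,x)),
\]
after lifting to the rectangle of the cell $\mathcal C$ containing $x$. If $q\notin\overline{\mathcal C}$, cell invariance together with the disjointness of the balls $B(q',r_{q'})$ makes this indicator vanish for $\vt$ small. In all cases, \eqref{fine77} of Lemma~\ref{fine75} applies for $\vt<\vt_2$, $\eta<\min(\eta_2,c_*\vt^2)$ and $s\geq T_{\eta,\al}$. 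It yields
\[
\E\Big(\int_s^{s+1}\ind_{B(q,\vt)}(\phi_\al(r,x))\,dr\,\big|\,\mathcal F_s\Big)\leq\kappa_0(q)\,\vt .
\]
For $q=(p_1,q_2)\in M_1\times Z_2$ one argues symmetrically: project onto the first coordinate, note that $|y_1-p_1|<\vt$, and use the symmetric statement of Lemma~\ref{fine75}.

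Summing over $q$ and using linearity and monotonicity of conditional expectation, I would obtain \eqref{fine39} with
\[
\kappa_0:=\sum_{q\in\mathcal M}\kappa_0(q),
\]
or alternatively $|\mathcal M|\max_q\kappa_0(q)$. This constant is independent of $\al,\eta,\vt,s,x$, since each $\kappa_0(q)$ is, by Lemma~\ref{fine75} and the remark following it. It depends only on $h_1,h_2,l,r$. The number of cells is finite, so taking the worst constant over cells changes nothing.

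There is no real obstacle, since all the analytic work is contained in Lemma~\ref{fine75}. The step needing the most care is checking that the projected intervals $[b_2-\vt,b_2+\vt]$ match the correct coordinate for each type of midpoint. One must also confirm that the parameter ranges ($\vt<\vt_2\leq\vt_1$, $\eta<\eta_2\leq\eta_0$) coincide with those under which Lemma~\ref{fine75} was established. Both points are immediate from the statements.
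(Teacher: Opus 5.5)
Your proposal is correct and follows the paper's proof essentially verbatim. You bound $\ind_{F_{\eta,\vt}}$ by the finite sum over midpoints of the projected-interval indicators (coordinate $x_2$ for $q\in Z_1\times M_2$, coordinate $x_1$ for $q\in M_1\times Z_2$), apply Lemma~\ref{fine75} to each term, and sum to get $\kappa_0=|\mathcal M|\max_q\kappa_0(q)$. Your separate treatment of the case $q\notin\overline{\mathcal C}$ is harmless but unnecessary, since Lemma~\ref{fine75} is already stated for every $q\in\mathcal M$ and every $x\in D_{\mathfrak r}$.
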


\begin{proof}
 For each midpoint
$q=(q_1,b_2)\in Z_1\times M_2$ we have
\[ E_\eta\cap B(q,\vt)\subset
\{x:x_2\in[b_2-\vt,b_2+\vt]\}.\] Hence
\[
\ind_{E_\eta\cap B(q,\vt)}(\phi_\al(r,x))\leq
\ind_{[b_2-\vt,b_2+\vt]}(\phi_{\al,2}(r,x)).
\]
Symmetrically, the same is trus  for $q=(p_1,q_2)\in M_1\times Z_2$,  with the roles of $1$
and $2$ swapped. Summing and applying Lemma~\ref{fine75},
\[
\E \Big(\int_s^{s+1}\ind_{F_{\eta,\vt}}(\phi_\al(r,x))\,dr\,\big|\,\mathcal F_s\Big)
\leq\sum_{q\in\mathcal M}\kappa_0(q)\,\vt\;=\;|\mathcal M|\,\max_{q \in\,\mathcal{M}}\kappa_0(q)\cdot\vt\;=:\kappa_0\vt.
\]
\end{proof}

We are now in position to prove \eqref{fine15}.
Fix $\vt\in(0,\vt_2)$ to be chosen explicitly below, and set
$\eta:=c_*\vt^2$. The constraint $\eta\leq\min(\eta_2,c_*\vt^2)$ from the
preceding lemmas is then equivalent to $c_*\vt^2\leq\eta_2$, which holds
provided $\vt\leq\sqrt{\eta_2/c_*}$. All the lemmas above then apply.

By \eqref{fine62} and \eqref{fine39},
\[
\E(J_\al(s,x)|\mathcal F_s)\leq\exp\left(-c_2\chi_\alpha\,\vt^2\right)
\left(1+\left(\exp\big(2c_1\chi_\alpha\,\vt\big)-1\right)\kappa_0\,\vt\right).
\]
Now, note that
\[
0\;<\;\chi_\alpha\;\leq\; n^2(0).
\]
Then, by using $e^{-z}\leq 1-z/2$ on $[0,\log 2]$, we have
 \[\vt\leq\sqrt{\log 2/(c_2n^2(0))}\Longrightarrow c_2\chi_\alpha\vt^2\leq c_2n^2(0)\vt^2\leq\log 2\Longrightarrow \exp(-c_2\chi_\alpha\vt^2)\leq 1-\frac{c_2\chi_\alpha\vt^2}{2}.\] Similarly, by using
$e^z-1\leq 2z$ on $[0,\log 2]$, we have 
 \[\vt\leq\frac{\log 2}{2c_1n^2(0)}\Longrightarrow 2c_1\chi_\alpha\vt\leq\log 2\Longrightarrow \exp(2c_1\chi_\alpha\vt)-1\leq 4c_1\chi_\alpha\vt.\] 
 Thus, if we combine the two inequalities above, we obtain
\[
\E(J_\al(s,x)|\mathcal F_s)\leq\left(1-\tfrac{1}{2}c_2\chi_\alpha\vt^2\right)\big(1+4c_1\kappa_0\chi_\alpha\vt^2\big)
\leq 1-\tfrac{1}2 c_2\chi_\alpha\vt^2{2}+4c_1\kappa_0\chi_\alpha\vt^2,
\]
where we discarded the cross term which is negative.

Next, we have
\[ \vt\leq c_2/(16c_1\kappa_0)\Longrightarrow 4c_1\kappa_0\vt^2\leq c_2\vt^2/4\Longrightarrow -\tfrac{1}2 c_2\chi_\alpha\vt^2+4c_1\kappa_0\chi_\alpha\vt^2\leq-\tfrac 14 c_2\chi_\alpha\vt^2,\]
giving
\[
\E(J_\al(s,x)|\mathcal F_s)\leq 1-\tfrac 14{c_2\chi_\alpha\vt^2}\leq\exp\!\left(-\tfrac 14{c_2 \chi_\alpha\vt^2}\right).
\]
Finally, if we set
\[
\bar\vt_0:=\vt_2\wedge\sqrt{\frac{\eta_2}{c_*}}\wedge\sqrt{\frac{\log 2}{2c_2 \tilde{\nu}^2(0)}}\wedge \frac{\log 2}{2c_1 \tilde{\nu}^2(0)}\wedge \frac{c_2}{16c_1\kappa_0},\ \ \ \ 
\bar\eta_0:=c_*\,\bar\vt_0^{\,2},\ \ \ \ 
\bar\kappa:=\frac{c_2\,\bar\vt_0^{\,2}}{4},
\]
\eqref{fine15} follows.

\section{Beyond factorization and alignment}\label{sec9}

In this section we discuss possible extensions of Theorem
\ref{thm:main} beyond the two main structural assumptions used in the
proof: the {factorized form} \eqref{fine6} of the
Hamiltonian, and the \emph{double alignment} condition (A) on $\lambda$
and $\rho$. The two extensions have very different difficulty
profiles, and we treat them separately.

\subsection{Beyond double alignment}

When $\lambda$ or $\rho$ is not aligned with $H$, several structural
facts break down. In dimension~$2$, condition (T) (tangency
$\sigma\cdot\nabla H=0$) automatically gives $\sigma=\rho\,\xi$ for a
scalar amplitude $\rho$, so monotonicity of $|H|$ along the flow still
holds without aligning $\rho$. 

However
\begin{itemize}
\item[-] without alignment of $\lambda$,
$\nabla\beta_\alpha=-\al\,\nabla\lambda/(2(\lambda+\al)^2)$ is no longer
parallel to $\nabla H$, and $\xi\cdot\nabla\beta_\alpha\neq 0$. The formula
for $g_\al$ in Proposition~\ref{fine10} then picks up an extra
term $\beta_\alpha\,\rho^2\,(\xi\cdot\nabla\lambda)/\lambda^3\cdot\xi$ that
does not vanish on $\mathcal S$;
\item[-] without alignment of $\rho$, $\xi\cdot\nabla\nu\neq 0$, and a
similar extra term $\beta_\alpha\,\nu\,(\xi\cdot\nabla\nu)\,\xi$ appears.
\end{itemize}
The divergence of these extra terms involves second derivatives of
$\log\lambda$ or $\log\rho$ along $\xi$ and does not vanish on
$\mathcal S$ in general, so the sign-control argument of
Section~\ref{fine64} fails. A natural perturbative approach, 
that is writing $\lambda=l(H)+\eps\,l_1$,
$\rho=r(H)+\eps\,r_1$ for small $\eps$, could
represent a possible direction for future work.

\subsection{Beyond factorization}

The factorization $H(x_1,x_2)=h_1(x_1)\,h_2(x_2)$ is used 
throughout the paper, but the role it plays is not uniform: some uses
are essentially notational, while others are truly structural. We
find it useful to separate these cleanly because the structural obstacles are concentrated in a small number of places, and identifying them is the starting point for a future extension.

\subsubsection*{What survives without factorization}

Several pieces of the argument depend only on $H$ being a $C^3$
function on $\mathbb{T}^2$ with a suitable Morse-type structure on its
critical set, and on conditions (T) and (A). Specifically
\begin{itemize}
\item[-] Proposition~\ref{fine10} (the reduction
$g_\al=\beta_\al\nu^2\,D\xi\,\xi$). The proof uses only $\sigma=\rho\,\xi$
(from (T) in $d=2$) and that $\nabla\lambda,\nabla\rho,\nabla\nu,
\nabla\beta_\alpha$ are parallel to $\nabla H$ (from (A)). 
\item[-] Cell invariance and the monotonicity identity
\[
H(\phi_\al(t,x))\;=\;H(x)\,\exp\!\bigg(\!-\!\int_0^t
\beta_\alpha(\phi_\al(s,x))\,\nu(\phi_\al(s,x))^2\,\Lambda(\phi_\al(s,x))\,ds\bigg),
\]
where $\Lambda$ is {defined} by the algebraic identity
$\nabla H\cdot D\xi\,\xi=H\,\Lambda$ (see Lemma~\ref{fine25}).
This identity is purely algebraic in $\nabla H$ and holds for any
$C^2$ function $H$ on $\mathbb{T}^2$; one only needs to verify that
the so-defined $\Lambda$ extends continuously across $\mathcal S$. So
the dynamical step ``$|H|$ is non-increasing along the flow whenever
$\Lambda\geq 0$'' is robust, {provided} one can still establish
$\Lambda\geq 0$ globally.
\item[-] The structural decomposition $\div\,g_\al=T_1-T_2$, with
\[
T_1\;=\;\frac{\al}{l(H)+\al}\,n^2(H)\,D_H,
\qquad
T_2\;=\;H\,\Lambda\,\Theta_\alpha(H),
\]
also remains valid under (T) and (A) alone, with
$D_H:=-\det\operatorname{Hess}H$ (cf.\ Remark~\ref{fine50}). The
crucial structural feature, that is the fact that the second term carries an explicit
factor $H$ and therefore vanishes on the separatrix
$\mathcal S=\{H=0\}$, is preserved. This is the feature exploited
in the sign-control argument, and it
is {not} a consequence of factorization.

\item[-] The characteristic-flow representation and the
contraction estimate at the SDE level only use that
$|H|$ is non-increasing, that $\div\,g_\al$ has a definite sign on the
bulk and is small near the bad set, and that $\xi$ is divergence-free.
They do not use factorization.
\end{itemize}

A substantial fraction of the machinery is therefore already written in
a form that does not rely on $H$ being a product.

\subsubsection*{Where factorization is genuinely used}

The places where factorization enters in an essential way are
concentrated in three sources, all geometric in nature.

\medskip

\noindent\textbf{1. The structure of the critical set and of
$\mathcal S$.}
For $H=h_1(x_1)\,h_2(x_2)$, the separatrix decomposes as
\[
\mathcal S\;=\;(Z_1\times\mathbb{T})\cup(\mathbb{T}\times Z_2),
\]
i.e.\ as a finite union of horizontal and vertical circles, meeting
transversally at the corners $Z_1\times Z_2$ (saddles of $H$). The
cells $\mathbb{T}^2\setminus\mathcal S$ are then open
{rectangles} in the universal cover, each containing a unique
center, and each cell can be lifted to $[a_1,b_1]\times[a_2,b_2]$ with
$h_i(a_i)=h_i(b_i)=0$. This rectangular cell structure is used
\begin{itemize}
\item[-] to locate centers in $M_1\times M_2$ and corners in
$Z_1\times Z_2$ explicitly;
\item[-] to identify the midpoints
$\mathcal M=(Z_1\times M_2)\cup(M_1\times Z_2)$ as a finite, discrete
set lying on $\mathcal S$ off the corners;
\item[-] in Section \ref{fine26}, to lift the flow on a single cell to a closed
rectangle in $\mathbb{R}^2$ of side $<2\pi$, on which one-dimensional
comparison arguments can be run coordinate by coordinate.
\end{itemize}

For a general non-degenerate $H\in C^3(\mathbb{T}^2)$, $\mathcal S$ is
still a finite union of $C^3$ curves, but
these curves need not be coordinate circles, the cells need not be
rectangles, and there is no preferred set of coordinates in which the
dynamics simplifies. The discrete set $\mathcal M$ has to be
reinterpreted intrinsically. By Remark~\ref{fine50} we have
$D_H=-\det\operatorname{Hess}H$, so Lemma~\ref{fine51} gives the
intrinsic characterization
\[
\mathcal M\;=\;\big\{q\in\mathcal S\,:\,\det\operatorname{Hess}H(q)=0\big\}.
\]
Note that at such a point $\nabla H$ does not vanish (only the corners
have $\nabla H=0$, and the corners are disjoint from $\mathcal M$);
rather, $\operatorname{Hess}H$ has rank $1$ and is degenerate in the
direction tangent to $\mathcal S$. 
\medskip

\noindent\textbf{2. The sign and vanishing structure of $\Lambda$ and
$D_H$.}
Under (H1), (H2) and (H3), one obtains the global lower bound
\[
\Lambda(x)\;\geq\;2\,(h_1')^2(x_1)\,(h_2')^2(x_2)\;\geq\;0,
\qquad x\in\mathbb{T}^2,
\]
together with the precise identification $\{\Lambda=0\}=M_1\times M_2$, see Lemma~\ref{fine36}. This is the mechanism that drives
$|H|$ to $0$ along the flow at a uniform rate on $D_{\mathfrak{r}}$.
Similarly, on the separatrix one has $D_H\geq 0$ with
$\{D_H=0\}\cap\mathcal S=\mathcal M$, see Lemma~\ref{fine51}, and
at each midpoint $q\in\mathcal M$ the function $D_H$ vanishes at exact
order $2$, with the quantitative two-sided bound
\eqref{fine63} of Lemma~\ref{fine48}
\[
c_1(q)\,(x_2-p_2)^2-c_2(q)\,|x_1-q_1|
\;\leq\;D_H(x)\;\leq\;
c_2(q)\,\big((x_2-p_2)^2+|x_1-q_1|\big).
\]
These are the inputs to Lemma~\ref{fine72},  the dichotomy
$|\div g_\al|\lesssim\vt$ on $F_{\eta,\vt}$, $\div g_\al\gtrsim\vt^2$
on $G_{\eta,\vt}$,  and they are the key point of the entire
sign-control argument.

The factorized form is used here in two distinct ways.

\smallskip

\emph{Sign of $\Lambda$.} The identity
\[
\Lambda\;=\;2(h_1')^2(h_2')^2-(h_1')^2(h_2 h_2'')-(h_1 h_1'')(h_2')^2
\]
has each negative term carrying a factor $h_i\,h_i''\leq 0$ by (H3),
so dropping these terms only strengthens the bound
$\Lambda\geq 2(h_1')^2(h_2')^2\geq 0$. For a general $H$, the
intrinsic counterpart is
\[
\Lambda(x)\;=\;\frac{\nabla H(x)\cdot D\xi(x)\,\xi(x)}{H(x)},
\]
extended continuously to $\mathcal S$. There is no obvious reason for
this to be everywhere non-negative without a convexity-type assumption
on $H$ playing the role of (H3). 
\smallskip

\emph{Order-2 vanishing of $D_H$ at midpoints.} The estimate \eqref{fine63} is asymmetric in $x_1$ and
$x_2$ at $q=(q_1,p_2)\in Z_1\times M_2$, as the dominant positive term is
quadratic in $x_2-p_2$, and the negative correction
is linear in $|x_1-q_1|$. 
For non-factorized $H$, there is no global splitting of $\mathbb{T}^2$
into ``longitudinal'' and ``transverse'' coordinates; the analogous
estimate must be formulated intrinsically near each $q\in\mathcal M$
in the form
\[
D_H(x)\;\geq\;c_q\,\dist(x,\mathcal M)^2-C_q\,\dist(x,\mathcal S),
\]
provided $D_H$ vanishes to exactly order $2$ along $\mathcal S$ at $q$
and to first order transverse to $\mathcal S$. This is in principle
still achievable, since $\mathcal M$
is the set where $\det\operatorname{Hess}H$ vanishes on $\mathcal S$,
and at such points one expects $D_H$ to behave like the squared
distance along $\mathcal S$. However, it is no longer a one-line
consequence of Taylor expansion in product coordinates and would
require a non-trivial geometric argument.

\medskip

\noindent\textbf{3. The componentwise SDE in product coordinates.}
In Section~\ref{fine26}, the localization
argument expresses the noise component $\sigma_\al(x)=\nu(x)\,\xi(x)$
in coordinates as
\[
\sigma_{\al,1}(x)\;=\;-\nu(x)\,h_1(x_1)\,h_2'(x_2),
\qquad
\sigma_{\al,2}(x)\;=\;\nu(x)\,h_1'(x_1)\,h_2(x_2),
\]
and exploits that each component is a product of a function of $x_1$
and a function of $x_2$. This is what makes the cell-arc analysis
(one-dimensional comparison on $[a_1,b_1]$ and $[a_2,b_2]$ separately)
work. For a general $H$, $\xi$ is a single $C^2$ vector field on
$\mathbb{T}^2$ with no such product structure, and the localization
lemma would have to be reformulated.

\subsubsection*{A strategy for a future extension}

The above analysis suggests that an extension to non-factorized $H$
would proceed by replacing the explicit product structure with the
following intrinsic inputs, treated as assumptions on $H$:
\begin{itemize}
\item[-] $H\in C^3(\mathbb{T}^2)$ with non-degenerate critical
points (Morse), so that the centers (local extrema), saddles
(replacing the corners), and the midpoints
$\mathcal M=\{q\in\mathcal S\,:\,\det\operatorname{Hess}H(q)=0\}$ form
three pairwise disjoint finite sets;
\item[-] $\Lambda\geq 0$ on $\mathbb{T}^2$ with
$\{\Lambda=0\}\subset\{\nabla H=0\}$ (a global ``convexity along the
flow'' assumption replacing (H3));
\item[-] $D_H\geq 0$ on $\mathcal S$ with
$\{D_H=0\}\cap\mathcal S=\mathcal M$, vanishing at exact order $2$ at
each midpoint in the longitudinal direction (replacing
Lemmas~\ref{fine51} and \ref{fine48}).
\end{itemize}
Under such assumptions, Sections~\ref{fine64} to \ref{fine56}
should go through with only notational changes, since they use only
the bounds on $\Lambda$, $D_H$, and the geometric-measure properties
of $\mathcal S$, $\mathcal M$, and the cells. Section \ref{fine26} is the place where the most rewriting would be needed

\section*{Acknowledgements} 
This material is based upon work supported by the National Science
Foundation under Grant No. DMS-2424139, while the  authors were in
residence at the Simons Laufer Mathematical Sciences Institute in
Berkeley, California, during the Fall 2025 semester.

\end{document}